\DeclareMathAlphabet{\mathpzc}{OT1}{pzc}{m}{it}
\renewcommand{\eqref}[1]{(\ref{#1})}   
\theoremstyle{definition} 
\newtheorem{theorem}{Theorem}[section]
\newtheorem{lemma}[theorem]{Lemma}
\newtheorem{proposition}[theorem]{Proposition}
\newtheorem{corollary}[theorem]{Corollary}
\newtheorem{remark}{Remark}[section]
\newcommand{\starsum}{\sideset{}{^{\star}} \sum}
\renewcommand{\l}{\left}
\renewcommand{\r}{\right}
\newcommand{\noi}{\noindent}
\newcommand{\moda}{(\textnormal{mod }a)}
\newcommand{\diff}{\textnormal{d}}
\newcommand{\R}{{\mathbb R}}
\newcommand{\Z}{{\mathbb Z}}
\newcommand{\C}{{\mathbb C}}
\newcommand{\N}{{\mathbb N}}
\newcommand{\h}{{\mathbb H}}
\newcommand{\abcdsumX}{\mathop{\sum_a\sum_b\sum_c\sum_d}_{\substack{ad-bc=r\\|a|, |b|, |c|, |d|\leq X}}}
\newcommand{\abcdsumrX}{\mathop{\sum_a\sum_b\sum_c\sum_d}_{\substack{ad-bc=r\\1\leq a, b, c, d \leq X}}}
\newcommand{\abcdsumoneX}{\mathop{\sum_a\sum_b\sum_c\sum_d}_{\substack{ad-bc=1\\1\leq a, b, c, d \leq X}}}
\newcommand{\abcdsumV}{\mathop{\sum_a\sum_b\sum_c\sum_d}_{\substack{ad-bc=r }}}
\newcommand{\be}{\begin{equation}}
\newcommand{\ee}{\end{equation}}
\newcommand{\ba}{\begin{equation}\begin{aligned}}
\newcommand{\ea}{\end{aligned}\end{equation}}
\newcommand{\ve}{\varepsilon}
\newcommand{\ts}{\mathop{\sum_{a_1 > 0} \sum_{0 \leq j \leq k} \sum_{1 \leq n \ll \frac{X^{1+\ve}}{lH} }\sum_{ 1 \leq m \ll \frac{X^{1+\ve}}{H} }}}
\numberwithin{equation}{section}
\begin{document}
\title[Lattice points on determinant surfaces]  {Lattice points on determinant surfaces and the spectrum of the automorphic Laplacian} 
\author{Satadal Ganguly, Rachita Guria}
	\address{Theoretical Statistics and Mathematics Unit, Indian Statistical Institute, 203 Barrackpore
		Trunk Road, Kolkata-700108, India.}
	\email{sgisical@gmail.com}
\address{Theoretical Statistics and Mathematics Unit, Indian Statistical Institute, 203 Barrackpore
	Trunk Road, Kolkata-700108, India.}
	\email{guriarachita2011@gmail.com}
	
\subjclass[2020] {Primary 11P21, 11N45, 11F30, 11F72; Secondary 11L07}
\keywords{Lattice points, Fourier coefficients of cusp forms, Kloosterman sum, Large sieve inequality}
\maketitle	

\begin{abstract}
	We use classical Fourier analysis along with tools from the spectral theory of Automorphic forms to derive an asymptotic formula with a strong error term for the number of integer solutions $(a, b, c, d)$ inside the expanding box $[-X,X]^4$ to the determinant equation $ad-bc=r$, where $r \neq 0$ is a fixed integer. Furthermore, we apply our method to study sums over these solutions where the variables are weighted; by periodic arithmetical functions in two of the variables in one case, and by an arbitrary sequence of complex numbers in another.
\end{abstract}

\section{Introduction}

The famous Gauss circle problem asks for a precise count for the number $N(r)$ of lattice points, i.e., points with integer coordinates, in the Euclidean plane inside a circle of 
radius $r$ as $r$ increases to infinity. Obtaining the asymptotic relation $N(r)\sim \pi r^2$ (as $r\longrightarrow \infty$) is an elementary exercise and the real task  is to estimate the growth of the error term $E(r)= N(r)-\pi r^2$. The current record is
\[
E(r)=O(r^{131/208+\ve}), \ \text{for any } \ve>0,
\]
due to Huxley \cite{H} and the conjectured bound $O(r^{1/2+\ve})$ remains hopelessly out of reach of the current technology. If we replace the circle by a hyperbola $xy=N$, with  $N\longrightarrow \infty$, the problem of counting lattice points with positive coordinates under the hyperbola  is known as Dirichlet's divisor problem and the situation here is completely analogous to the Gauss circle problem (see \cite[Chap.~1]{IK}). An interesting
lattice point counting problem  concerns counting elements $\gamma \in SL(2, \Z) $; i.e., integer matrices 
$\gamma=\begin{pmatrix}
a &b\\
c &d\\
\end{pmatrix}$
with determinant $ad-bc=1$, such that $||\gamma||\leq x$, where we take the norm to be the standard Euclidean norm on ${\R}^4$ after identifying the matrix $\gamma$ with the point $(a,b,c,d)$; i.e., 
\[
||\gamma||^2=a^2+b^2+c^2+d^2. 
\]
This problem can be interpreted as the analogue of the Gauss circle problem in the hyperbolic plane $\mathbb{H}$. To see why, let us define a distance function $d(z, w)$ between two points $z$ and $w$ in $\mathbb{H}$ by
\[
d(z,w)=\frac{|z-w|^2}{\Im z \Im w}+2,
\]
then $d(\gamma. i, i)=a^2+b^2+c^2+d^2$, where $\gamma.z$ denotes the usual M\"{o}bius action 
$
\gamma. z=(az+b)(cz+d)^{-1}. 
$
This distance is a minor modification of the  standard hyperbolic distance (see \cite[Chap. 1]{Iw2}). Thus the above  problem is equivalent to studying the asymptotic behaviour of the size of the set $\{\gamma \in \Gamma: d(\gamma.i, i)\leq x\}$, where
$\Gamma$ is the group $SL(2, \Z) $ and  one may also consider a more general problem by taking $\Gamma$ to be any discrete subgroups of $SL(2, \R) $ with finite 
co-volume. Unlike in the Euclidean case, even finding the asymptotic for  lattice point counting problems is not easy and the 
elementary counting arguments no longer work due to the negative curvature of the 
underlying space.
 Lattice point counting 
  problems are intimately related to the spectrum of the Laplace operator on  appropriate manifolds. For example, the Gauss circle problem is 
  nothing but the problem of improving the bound on the error term in the Weyl law for the flat torus ${\Z}^2\backslash{\R}^2$. The above hyperbolic lattice point counting  
  problem  is related to the spectrum of the hyperbolic Laplacian acting on $L^2(\Gamma\backslash\mathbb{H})$ (as a densely defined operator) and after initial advances made by Delsarte \cite{Delsarte} and Huber, who solved the problem for cocompact groups (see \cite{Hu1, Hu2}),
the original problem for $\Gamma=SL(2, \Z)$ could be tackled once Selberg established the complete
  spectral resolution of the Laplacian for  groups with parabolic elements.
Selberg's result (see, e.g., \cite[Corollary 15.12]{IK}) says
\be\label{Selberg}
\sum_{\substack{\gamma \in  SL(2, \Z)
\\ ||\gamma|| \leq x}} 1=6x^2+O(x^{4/3}).
\ee
 For an account of this  deep  and beautiful circle of ideas, see, e.g.,  \cite[Chap. 15]{IK}, \cite{Iw2}, \cite{Lax-Phillips},  \cite{Patterson} and the original articles by Selberg, viz. \cite{Selberg1}, \cite{Selberg2}. For an interesting twisted version of Selberg's result, see \cite{Good}.
Selberg's work has been vastly generalized by Duke, Rudnick and Sarnak \cite{DRS}
to  varieties $V$ with an algebraic group action for which $V(\mathbb{R})$ is affine and symmetric by applying advanced results from harmonic analysis. A special case of their result says
\be\label{DRS}
\sum_{\substack{\gamma \in  M(n, \Z)\\ \det(\gamma)=r\\
\\ \|\gamma\| \leq x}} 1=c_{n,r} x^{n^2-n}+O_{\ve}(x^{n(n-1)-1/(n+1)+\ve}),
\ee
where $ M(n, \Z)$ is the set of $n\times n$ integer matrices, $\|\cdot\|$ denotes the Euclidean norm $\sqrt{tr(\gamma {\gamma}^t)}$, $r\neq 0$ is a fixed integer,  (i.e., $r=O(1)$), $c_{n,r}>0$ is an explicit constant and
$\ve>0$ is arbitrary. The saving $\delta_n=1/(n+1)$ in the exponent of the error term in \eqref{DRS} was later  improved by Gorodnik, Nevo and Yehoshua \cite{GNY} to, roughly, $2\delta_n$ and very recently it  has been  improved further   by Blomer and Lutsko \cite{blomer2023hyperbolic} who have shown that the saving is at least $1+1/\sqrt{2}+O(1/n)$ (for $r=1$ and $ n\geq 3$). In the case $n=2$, however, the bound $O(X^{4/3})$ for the error term due to Selberg remains unbeaten to this day.

\noi
 The problem under discussion is sensitive to the choice of the norm  and all the works mentioned above use the fact that the Euclidean norm is invariant under the action of  $SO(2, \mathbb{R})$. 
Our goal here is to study the same counting problem  but after replacing the Euclidean norm by the supremum norm, i.e., we now take $\|\gamma\| =\|\gamma\|_{\infty}={\max}_{i,j}\{|a_{i,j}|\}$, where
$a_{i,j}$ runs over the entries of the matrix $\gamma$; i.e., we count integral matrices in an expanding box rather than an expanding ball. This choice of the norm seems more natural if we view our problem in the general framework of counting integral points on affine hypersurfaces defined over $\Z$ (see, e.g., \cite{HB}). However, unlike \cite{HB}, we do not count with smooth weights here and this makes the job significantly harder. We confine ourselves to the simplest case $n=2$ in this work and our emphasis is on obtaining an asymptotic formula with a strong bound on the error term.  The methods of the works mentioned above do not seem to be applicable here, at least directly, and even if those methods can be adapted to fit this set-up, the quality of the error term seems likely to be weak (see, e,g., the remarks following \cite[Cor.~2.3]{GN}).
 Nevertheless, drawing on deep results from the theory of automorphic forms and using delicate analytic tools we have been able to show
 the following.  
\begin{theorem}\label{main}
Suppose $\theta$ is any admissible exponent  towards the Ramanujan Conjecture. 
Let
\[
\mathcal{S}_r(X)=\sum_{\substack{\gamma \in  M(2, \Z)\\ \det(\gamma)=r\\
\\ ||\gamma||_{\infty} \leq X}} 1=\abcdsumX 1,
\]
where $a, b, c, d$ vary over the integers and $r$ is a fixed non-zero integer. 
Then we have 
\be\label{main1}
\mathcal{S}_r(X)=\frac{16}{\zeta(2)}\frac{\sigma(|r|)}{|r|}X^2+O_{\ve}(|r|^{\theta}X^{3/2+\ve}),
\ee
as long as $r=O\l(X^{{1/3}}\r)$.
\end{theorem}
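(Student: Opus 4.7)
The plan is to write $\mathcal{S}_{r}(X)$ as an outer sum over $(a,c)$ whose inner factor counts $(b,d)\in[-X,X]^{2}$ on the line $ad-bc=r$, extract the main term from the inner count by Poisson summation, and bound the oscillatory remainder via the spectral theory of $\mathrm{GL}_{2}$. Signs may be absorbed since the substitutions $(a,b,c,d)\mapsto(\epsilon a,\delta b,\delta c,\epsilon d)$, $\epsilon,\delta\in\{\pm 1\}$, preserve $ad-bc$; the contribution of $ac=0$ is $O(X^{1+\varepsilon})$ and negligible. For $(a,c)$ in the interior write $\ell=\gcd(a,c)$ and $a=\ell a'$, $c=\ell c'$ with $\gcd(a',c')=1$; a solution $(b,d)$ exists only if $\ell\mid r$, and then $d$ is forced into the residue class $d_{0}:=(r/\ell)\,\overline{a'}\pmod{c'}$, with $b$ determined by $d$. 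Hence the inner count equals $\#\{d\in(d_{0}+c'\Z)\cap I(a,c)\}$, where $I(a,c)\subseteq[-X,X]$ is the interval cutting out $|b|\leq X$; a short calculation gives $|I(a,c)|=\min(2X,\,2c'X/a')$ provided $r=O(X^{1/3})$, so that the centre of $I(a,c)$ stays well inside $[-X,X]$.

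Poisson summation in $d$ modulo $c'$ splits the inner count into the leading term $|I(a,c)|/c'=2X/\max(a',c')$ plus the non-zero Poisson frequencies. Summing the leading term, M\"obius inversion together with the standard identity
\[
\sum_{\substack{|a'|,|c'|\leq Y\\\gcd(a',c')=1}}\frac{1}{\max(|a'|,|c'|)}=\frac{8Y}{\zeta(2)}+O(\log^{2}Y)
\]
applied with $Y=X/\ell$ and then summed over $\ell\mid |r|$ yields exactly the advertised main term $\frac{16}{\zeta(2)}\frac{\sigma(|r|)}{|r|}X^{2}$, with a boundary error of size $O(X^{1+\varepsilon})$. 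The non-zero Poisson frequencies contribute phases $e(m(r/\ell)\overline{a'}/c')$ weighted by $\widehat{\mathbf{1}_{I(a,c)}}(m/c')$; trivial estimates on these only recover $O(X^{2})$, so genuine cancellation across $(a',c')$ is essential.

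After a dyadic partition of the scales of $\ell$, $a'$ and $c'$, I would apply a second Poisson summation in $a'$ (first split into invertible residue classes modulo $c'$ so that the coprimality condition becomes an additive character), thereby collecting the phases into Kloosterman sums. The error collapses into dyadic sums
\[
\sum_{c'\asymp L}\frac{1}{c'}\sum_{m,n}\phi(m,n,c')\,S(m,\pm r/\ell;c'),
\]
with $\phi$ smooth and localised at dual scales $m\ll X^{1+\varepsilon}/H$, $n\ll X^{1+\varepsilon}/(\ell H)$, where $H$ is the dyadic scale of $a'$. Each block is then opened by the Kuznetsov trace formula into a spectral average of Fourier coefficients $\rho_{j}(r/\ell)$ over the automorphic spectrum at level one; bounding the distinguished coefficient $\rho_{j}(r/\ell)$ by the Ramanujan conjecture with admissible exponent $\theta$ produces the factor $|r|^{\theta}$, while the remaining spectral sum is controlled by the Deshouillers--Iwaniec large sieve, delivering the square-root cancellation and the overall bound $O_{\varepsilon}(|r|^{\theta}X^{3/2+\varepsilon})$.

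The principal obstacle is the spectral step: tracking $\phi$ through two Poisson summations well enough for the Kuznetsov formula and the large sieve to apply uniformly in all dyadic parameters, and recombining the dyadic estimates without polynomial loss. The trade-off between the Kloosterman modulus $c'\leq X/\ell$ and the dual ranges of $m$ and $n$ is exactly what forces the exponent $3/2$, and the hypothesis $r=O(X^{1/3})$ is the threshold below which this spectral bound dominates the residual trivial estimates on the degenerate portions of parameter space (where $H$ or $L$ is very small).
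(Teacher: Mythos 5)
Your overall architecture mirrors the paper's: eliminate one of the four variables via the congruence, Poisson-sum twice to arrive at Kloosterman sums $S(m r/\ell, n;\,c')$, then Kuznetsov plus the Deshouillers--Iwaniec large sieve for twisted coefficients. The paper's roles of $a\leftrightarrow c$, $b\leftrightarrow d$ are swapped in your version, but that is cosmetic. Your derivation of the main term via the elementary identity $\sum_{\gcd(a',c')=1,\,|a'|,|c'|\le Y} 1/\max(|a'|,|c'|)=8Y/\zeta(2)+O(\log^2 Y)$ is a clean alternative to the paper's route (which instead Poisson-sums in $b$ and carries the zero frequency through an integral analysis); both give $\tfrac{16}{\zeta(2)}\tfrac{\sigma(|r|)}{|r|}X^2$.

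However, the error-term half has several genuine gaps. First, without first smoothing the sharp cut at $|b|,|d|\le X$, the Poisson dual sums do not truncate at $m\ll X^{1+\varepsilon}/H$ and $n\ll X^{1+\varepsilon}/(\ell H)$: the Fourier transform of a sharp indicator decays only like $1/|m|$, not rapidly. The paper has to introduce a smooth cutoff $w$ with transition length $H$ (with $H=\sqrt X$ in the end), absorb the $O(HX^{1+\varepsilon})$ boundary error, and only then obtain the effective truncation of $m$ and $n$. You treat $H$ as a dyadic scale of $a'$, which is not the same thing and does not justify the claimed decay.

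Second, and more seriously, you invoke the Kuznetsov formula followed by the Deshouillers--Iwaniec large sieve as if the pieces snap together automatically, but the input to that large sieve is a sum $\sum a_n \rho_j(n) n^{i\kappa_j}$; the twist $n^{i\kappa_j}$ has to be \emph{produced}, not assumed. This is precisely the hard part of the paper. After the two Poisson summations the weight $\widehat{G}(n/a_1,m/a_1)$ is not flat, and a blind application of Kuznetsov gives Bessel transforms that one cannot control. The paper's route is: (i) a delicate dyadic localization of the stationary point of the exponential integral defining $\widehat G$, extracting a phase $e\bigl(-2\sqrt{mnX/a_1}\bigr)$ with controlled error (Proposition 3.7); (ii) Kuznetsov in the modulus variable; (iii) a power-series expansion of $K_{2i\eta}$ (valid because $t\ll 1$, which is where $r=O(X^{1/3})$ enters), followed by a second stationary phase that turns the combined phase into $(mn)^{-i\kappa_j}$; and only then (iv) the large sieve. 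You also describe the spectral payoff as coming from bounding a single ``distinguished coefficient $\rho_j(r/\ell)$,'' but after Kuznetsov one has the product $\rho_j(n r/\ell)\rho_j(m)$ with both $m$ and $n$ varying; the $|r|^\theta$ is extracted using Hecke multiplicativity inside a Rankin--Selberg--type second-moment bound, not by pulling out one fixed coefficient. Finally, your heuristic for the $r=O(X^{1/3})$ threshold (a balance between spectral and trivial estimates) is not what actually forces it; it arises from requiring the Bessel argument $t\asymp\sqrt{mnr_1}/c\ll 1$ for the power-series expansion of $K_{2i\eta}$, given the cutoff $c\gg X^{2/3+\varepsilon}/\ell$ needed in one of the two spectral blocks.
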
  
\begin{remark}\label{kim}
Conjecturally $\theta$ can be taken to be zero and by the work of Kim and Sarnak   \cite[Appendix 2]{KS} it is known that $\theta \leq \frac{7}{64}$. 
\end{remark} 
\noi
The analytic approach that we have followed  in proving the above result is quite flexible and  it allows for attaching suitable analytic or arithmetic weights  to one or more of the variables $a$, $b$, $c$ and $d$ and studying weighted sums of the form 
\[
{\mathop{\sum_a\sum_b\sum_c\sum_d}_{\substack{ad-bc=r\\|a|, |b|, |c|, |d|\leq X}}}f_1 (a) f_2(b)f_3(c)f_4(d),
\]
where, $f_i$'s can be, for example, functions on $\R$ with sufficient regularity conditions,  periodic arithmetical functions such as Dirichlet characters, additive characters or more generally a trace function.  One may also attach the divisor function or coefficients of automorphic forms to some of the variables. It is also possible to attach an arbitrary sequence of complex numbers to one of the variables as we show below. Such weighted sums  do occur naturally in practice, e.g., while studying amplified moments of $L$-functions (see, e.g., \cite{DFI-det}, \cite[\S 16]{DFI-Artin},  \cite{BC}).   We now state a few  results that demonstrate the wide applicability of the method. 
For brevity we only consider solutions to the determinant equations that are positive but extending the results to all integers is not difficult. 
\begin{theorem}\label{arbitrary}
Let $(\alpha(n))$ be a sequence  of complex numbers such that $\alpha(n)=O_{\ve}(n^{\ve})$ for some $\ve>0$. 
Then for any non-zero integer $r$, we have
\ba
\abcdsumrX \alpha(a) = A_{w,\alpha}(X,r) + O\l(\max\l(|r|, X^{5/3+\ve}\r)\r),\nonumber
\ea
where 
\ba 
A_{w,\alpha}(X,r) = \sum_{l|r}\mathop{\sum\sum}_{\substack{(a, c) =l\\c \neq 0 \\ a>1}} \frac {l \alpha(a)} {a} w(a)w(c) \int_1^X w\l(\frac{cx}{a}\r)\diff x, \nonumber
\ea
and $w$ is any function that satisfies the conditions \eqref{w0}-\eqref{w-prime}.
\end{theorem} 
\noi
In particular, by taking $\alpha(a)$ to be the constant sequence $1$ and following the method of extraction of the main term in Theorem \ref{main}, we obtain
 \[ 
\abcdsumrX 1=  \frac{2}{\zeta(2)}\frac{\sigma(|r|)}{|r|}X^2 + O_{\ve}\l(  \max \l(X^{5/3}, |r|\r)X^{\ve}\r).
\]
By symmetry of the solutions to the determinant equation, we obtain, therefore, the following corollary which increases the range of $r$ in Theorem \ref{main} at the cost of a worse error term. 
\begin{corollary} We have,
\[ 
\abcdsumX 1=  \frac{16}{\zeta(2)}\frac{\sigma(|r|)}{|r|}X^2 + O_{\ve}\l(  \max \l(X^{5/3}, |r|\r)X^{\ve}\r).
\]
\end{corollary} 
\noi
\begin{remark}
The above result has also been obtained independently by Muhammad Afifurrahman \cite{Afifur}  by a totally different method that uses clever counting arguments and builds up on earlier results on counting integer points on modular hyperbolas. \\ 
\end{remark}

\noi
We can generalize the above result by attaching  arbitrary periodic arithmetical functions  to two of the variables, say $b$ and $c$. Let $f_1$ and $f_2$ be two fixed periodic arithmetical functions with periods $q_1$ and $q_2$ respectively, say.  For simplicity, we take  $r=1$ and $q_1, q_2$ to be distinct primes.  Then we have the following result where we have used the notations of the previous theorem. 
\begin{theorem}\label{weighted}
We have, 
\ba
\abcdsumoneX \alpha(a)f_1(b)f_2(c) 
=\widehat{f_{1, q_1}}(0)\widehat{f_{2, q_2}}(0)A_{w,\alpha}(X, 1)
&+\prod_{j=1,2}\l(f_{j}(0)-\widehat{f_{j, q_j}}(0)\r)S(\alpha, w, q_1, q_2) \\
&+O_{ \alpha, f_1, f_2, \ve}(X^{5/3+\ve}), 
\ea
where, 
\[
S(\alpha, w, q_1, q_2)=\sum_{(a,q_1 q_2)=1} \frac{\phi(a)}{a^2}\sum_{j_1=1}^{\infty}\sum_{j_2=1}^{\infty}\frac{\alpha (a{q_1}^{j_1}{q_2}^{j_2})w(a{q_1}^{j_1}{q_2}^{j_2})}{{q_1}^{j_1+1}{q_2}^{j_2+1}}\int\int w(x)w(y)w\l(\frac{xy}{a{q_1}^{j_1}{q_2}^{j_2}}\r)\diff x \diff y,
\]
and 
\[
\widehat{f_{i, q_i}}(\beta)=\frac{1}{q_i}\sum_{\gamma (\text{mod }q_i)} f_i(\gamma)e(-\beta\gamma/q_i). 
\]
\end{theorem}

Taking $(\alpha(a))$ to be the constant sequence $1$, one obtains:
\begin{corollary}\label{special}
\[
\abcdsumoneX f_1(b)f_2(c) =\frac{2X^2}{\zeta(2)}\l(\widehat{f_{1, q_1}}(0)\widehat{f_{2, q_2}}(0) +\frac{\prod_{j=1,2}\l(f_{j}(0)-\widehat{f_{j, q_j}}(0)\r)}{q_1 q_2 ({q_1}^2 -1)({q_2}^2 -1)} \r)+O_{ f_1, f_2,\ve}(X^{5/3+\ve}).
\]
\end{corollary}
\noi
The reason for the weaker error terms in the above two theorems over Theorem \ref{main} is that whereas we exploit cancellations in a sum of Kloosterman sums that arises in the proof of Theorem \ref{main}, we are unable to do so in the proofs of the next two theorems since the sequence $(\alpha(a))$ is arbitrary.  If $\alpha(a)$ is a suitable sequence; e.g., a constant sequence, then it certainly possible to improve upon the error term, e.g., in Cor. \ref{special}, by appealing to the tools from the spectral theory of automorphic forms. \\

\noi
Finally, we mention a result from our companion article \cite{GG2} which is the exact analogue of Theorem \ref{main} where the variables are counted with smooth weights. As one would expect, we get a stronger error term. 
\begin{theorem}\label{smoothed} Let
$V$ is a fixed smooth function with compact support, say, in $[1,2]$ and let $r$ be a fixed  non-zero integer. Then we  have,
\be\label{smooth}
\abcdsumV V\l(\frac{a}{X}\r)V\l(\frac{b}{X}\r)V\l(\frac{c}{X}\r)V\l(\frac{d}{X}\r) = M_V(X,r) + O_{\ve}\l(| r|^{\theta}X^{1+\ve}\r) , \nonumber
\ee
where
\[
M_V(X,r)=\sum_{l|r_1}\sum_{k>0}\frac{\mu(k)}{k}\iiint \frac{1}{z} V\l(\frac{x}{X}\r)V\l(\frac{lky}{\ X}\r)V\l(\frac{lkz}{X}\r)V\l(\frac{r_1+lkxy}{ zlkX}\r) \diff x \diff y \diff z,
\]
and for $0\neq r\in (-3X^2, X^2)$, we may write $M_V(X, r) =c_V\frac{\sigma(|r|)}{|r|} X^2+ O(\sigma(|r|)\),
where $c_V>0$ is a constant depending only on $V$.
\end{theorem} 
\noi
The proof of this result follows the approach adopted here but it is considerably simpler than the proof of  Theorem \ref{main}. See \S \ref{proof-sketch} below or \cite{GG2} for further details. 

\vspace{-2mm}
 \begin{remark}
The reason we need to take $r=O(X^{1/3})$ in Theorem \ref{main}  is  explained in Remark \ref{1/3}. This restriction can  be relaxed with more work but we have not tried to do so in this paper. 
\end{remark} 

\begin{remark}
 Before the 2023 IISER Kolkata PhD thesis of the second named author in which Theorem \ref{main} was proved, the only other places where we could find this question addressed were  \cite{GN} and \cite{Roettger}. While  both these papers consider a far more general problem, but  when specialized to the case of  $2\times 2$ integer matrices, the results in these papers give much poorer savings in the exponent in the error term over the main term; namely,  $1/16$ in  \cite[Cor. 2.3]{GN}) and $1/15$ in  \cite[Thm. 1.2]{Roettger}. 
\end{remark}\vspace{-4mm}

\begin{remark}
Very recently, in September, 2025, Dhanda, Haynes and  Prasala \cite{DHP} and independently, Chapman and Mudgal \cite{Chapman-Mudgal}  have made a significant improvement to the error term in Theorem \ref{main} and now the error term is known to be $O(X^{1+\ve})$ which is essentially the best possible bound one can hope for (unless one considers removing the ``$\ve$") and moreover, this is proved in  the widest possible range of $r$ as well. The two proofs are different but the main idea is the same. It is an elegant idea that cleverly uses a symmetry of the solutions to the determinant equation that was not exploited by earlier authors. However, their idea uses a direct counting argument  instead of  analytic  techniques and therefore, if there are oscillatory  or smooth weights present, as in Theorems \ref{arbitrary}, \ref{weighted} or \ref{smoothed}, then it appears that this idea is no longer applicable. 
\end{remark}

\subsection{Sketch of the proofs}
\label{proof-sketch}
The choice of the norm here does not allow us to launch Harmonic Analysis directly into the counting problem. So we  ``forget" the  group structure of $SL(2, \Z)$ temporarily and treat the equation $\det(\gamma)=r$ as the equation defining a surface: $xy-zw=r$. Of course, the group structure remains present throughout, only hidden, and  after several transformations, it resurfaces as we are led to studying the automorphic spectrum of $SL(2, \Z)$ as explained below. \\
\noindent
 Let us first see what happens if we count with smooth weights, i.e., Theorem \ref{smoothed}. We interpret the determinant equation as a congruence modulo one of the variables and then apply the POisson summation formula. The zero-th frequency in the dual sum gives the main term and after one more application of Poisson summation we are left with, essentially, the following double sum:
\[
  \sum_{\substack{n\in \Z\\ n\neq 0}}\sum_{\substack{m\in \Z\\ m\neq 0}} \iint  V\l(\frac{x}{X}\r)V\l(\frac{y}{X}\r)\sum_{a} \frac{1}{a^2} V\l(\frac{a}{X}\r)V\l(\frac{r+yx}{aX}\r)
 e\l(\frac{-nx}{a}\r)e\l(\frac{-my}{a}\r) S(nr,-m, a)\diff x\, \diff y.
 \]
 At this stage, the Weil bound for Kloosterman sums already gives an asymptotic formula but we get a better error term by applying the Kuznetsov formula. 
Since the variable $a$ is weighted by a ``flat" function $V(x/X)$ (i.e., $y^j V^{(j)}(y)\ll 1$), it is not very difficult to obtain strong bounds on the Bessel transforms. See \cite{GG2} for details.\\
\noindent
  In contrast to the smooth case,  several  new features  emerge when we employ the  above strategy  to the sharp-cuts sum considered in Theorem \ref{main}.  By the Weil bound alone, it is not difficult to obtain an asymptotic formula with a weaker error term $O(X^{7/4+\ve})$ (see \eqref{sevenbyfour}).  A more elaborate  treatment of the exponential integrals yields the bound  $O(X^{5/3+\ve})$; see \eqref{5/3}. 
  However, our main goal is to obtain a stronger error term and to this end we employ the so called  ``Kloostermania" techniques (see \cite{Li}), i.e., we use the Kuznetsov formula (see Lemma \ref{kuz}) to transform the problem of estimating a sum of Kloosterman sums to that of estimating a sum of Fourier coefficients of Maass forms ranging over the full $GL(2)$ spectrum. While this approach is quite standard, the difficulty lies in obtaining a strong bound on the error term. Indeed,  if we apply the Kuznetsov formula directly after  the second application of Poisson summation, after smoothening the original sum,  then it seems extremely difficult to obtain  good estimates for the resulting Bessel transforms since 
 we no longer have the  ``flat" weight functions ( i.e. ones whose derivatives have  rapid decay).
 To overcome this problem we analyze the Fourier transform $\widehat{G}(n/a_1, m/a_1)$ (see \eqref{India}) carefully and extract stationary phases of the type 
  $e\l(-2\sqrt{mnX/a_1}\r)$  (see Prop. \ref{ET}). 
 However, a straight-forward application of the stationary phase lemma produces error terms that are  quite large. Instead, we imagine  the stationary point, which depends on three variables of summation $a_1$, $m,$ and $n$ (see \S \ref{B1} to \S \ref{conI4}), to be moving in an interval and we break up  the range of the stationary point into smaller subintervals and we also subdivide the range of integration suitably so that the stationary point is highly localized. See \S \ref{I5two} for details.
 When the stationary point is sufficiently localized,  the variables are  under severe restrictions and exploiting these restrictions leads to  strong estimates for the contributions of the error terms arising from the stationary phase analysis.  \\
  After we apply the Kuznetsov formula, the phase of the $K$-Bessel function combines with the already existing phase and in certain ranges of the variables we are able to extract a stationary phase which is essentially of the form $(mn)^{i\kappa_j}$ (see \eqref{lt}). At this stage, a large sieve inequality (see Lemma \ref{ls}) for the twisted coefficients $\rho_j(n)n^{i\kappa_j}$ that arose in a work of Deshouillers and Iwaniec  \cite{DINON} in connection with the Phillips-Sarnak theory  on destruction of cusp forms for generic  groups  (see \cite{PS}) gives us an extra saving in certain ranges of the variables.  In the complementary ranges, we obtain adequate saving by the Weyl law and the local (spectral) large sieve of Jutila (Lemma \ref{jutila}).  
  The extra saving one gets by exploiting the average over the variable $a$ can 
  be  compared  to what Heath-Brown calls ``double Kloosterman refinement" in \cite{HB}), though the method of averaging over the moduli  in \cite{HB} is quite different. \\
  \noi
  The proofs of the next two theorems follow, essentially, the same initial steps but we do not sum over the variable $a$ non-trivially. 
  \noi
We have been a bit brief  about some  details that are considered standard. The interested reader can find full details in the PhD thesis of the second author available at
    \url{https://sites.google.com/view/rachita-guria/research?authuser=0}.
\subsection{Notations and conventions}
 The letter $\ve$ denotes a positive real 
	number which can be taken to be as small as required and in different occurrences it may assume different values. 
	The notation ``$f(y) =O(g(y))$" or ``$f(y) \ll g(y)$", where $g$ is a positive function, means
	that there is a constant $c>0$ such that $|f(y)|\leq cg(y)$ for any $y$ in the concerned domain. 
	The dependence of this implied constant on some parameter(s) may sometimes
	be displayed by  a suffix (or suffixes) and may sometimes be suppressed. Also, $f(y)\asymp g(y)$ denotes $f(y)\ll g(y)$ as well as $g(y)\ll f(y)$. For a positive integer $n$, $\tau(n), \sigma(n), \phi(n)$ denotes, respectively the total number of positive divisors of $n$, the sum of the divisors of $n$, and the number of residue classes modulo $n$ that are prime to $n$. The symbol $\mathcal{L}(I)$ denotes the length of an interval $I$. The symbol 
	$\sim$, as in $\sum_{n\sim N}$ denotes $n$ is varying in the  interval $[N, 4N)$.
\subsection*{Acknowledgements}
The authors thank Valentin Blomer, Tim Browning,  Étienne Fouvry, Gergely Harcos,  Vinay Kumaraswamy, Gopal Maiti, Ritabrata Munshi, D. Surya Ramana,  Peter Sarnak, Prahlad Sharma and Igor Shparlinski for helpful comments. This work is part of the second author's PhD thesis. She thanks Indian Institute of Science Education and Research, Kolkata and all her teachers, especially Subrata Shyam Roy, for all the support. The authors also thank Indian Statistical Institute, Kolkata where this work was carried out for its excellent working atmosphere. 

\section{Preliminaries}
\subsection{Results from the theory of automorphic forms} Let us fix an orthonormal basis of Maass cusp forms $\{u_j(z)\}_1^{\infty}$ for the full modular group $  \text{SL}(2, \Z)$, consisting of  common eigenfunctions of all the Hecke operators $T_n, n\geq 1$ (see  \cite{DI} for definitions). Thus every $u_j(z)$ is an eigenfunction of the Laplace operator with the associated eigenvalue $s_j(1-s_j)$, $s_j \in \C$. We write $s_j$ as $s_j=\frac{1}{2}+i\kappa_j$ with $\kappa_j \in \C$. It is known that $\kappa_j \in \R$ and in fact $|\kappa_j| > 3.815$ (see \cite[Lemma 1.4]{motospectral}).
Every $u_j(z)$ has the following Fourier expansion at the cusp at $\infty$:
\be 
u_j(z) = \sqrt{y} \sum_{n \neq 0} \rho_j(n)K_{i\kappa_j}(2\pi |n|y)e(nx), \nonumber
\ee 
where $z = x+iy,$ $x,y \in \R$. Note that $K_{i\kappa}(y) = K_{-i\kappa}(y)$. Hence, without any loss of generality, we may assume that $\kappa_j >0$. 
Suppose $\lambda_j(n)$ is the eigenvalue of $u_j(z)$ for the Hecke operator $T_n$. 
\ba \label{heckerelation}
\lambda _j (n)\lambda _j (m) = \sum_{d |(m,n)} \lambda _j \l(\frac{mn}{d^2}\r) \text{ and }
\rho_j(\pm n) = \rho_j(\pm 1)\lambda_j(n).
\ea
\textbf{The Ramanujan–Petersson conjecture:} The Hecke eigenvalues at primes $p$ satisfy the bound 
\ba \label{ramanujan}
H(\theta): |\lambda_j(p)| \leq 2 p^{\theta},
\ea
where $\theta$ is a positive real number. It is conjectured that $H(\theta)$ holds for any $\theta>0$.
\begin{lemma}\label{kuz}(\textbf{The Kuznetsov formula: opposite sign case}) Let $f(t)$ be of $C^2$ class with compact support in $(0,\infty)$. Then for $n,m \geq 1$, we have 
\ba \sum_ c \frac{S(n,-m,c)}{c} f\l(\frac{4\pi \sqrt{mn}}{c}\r)&= \sum\limits_{j = 1}^{\infty} \rho_ j(n) \rho_ j(m)\check{f}(\kappa_ j) + \frac{1}{\pi}\int\limits_{-\infty}^{\infty} (nm)^{-i\eta}\sigma_{2i\eta}(n)\sigma_{2i\eta}(m)\frac{\cosh(\pi \eta)\check{f}(\eta)}{\l|\zeta (1+2i\eta)\r|^2}\diff \eta ,\nonumber
\ea 
where $\check{f}(\eta)$ is the Bessel transform 
\be \label{fcheckkuz}
\check{f}(\eta)= \frac{4}{\pi}\int\limits_ 0 ^{\infty} K_{2i\eta}(t) f(t) \frac{\diff t}{t}.  
\ee
\end{lemma}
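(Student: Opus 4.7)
The plan is to establish the identity by computing a suitable inner product of Poincar\'e series in two ways: geometrically, by unfolding (which produces the Kloosterman sum side), and spectrally, by Parseval applied to the spectral decomposition of $L^2(\Gamma\backslash\mathbb{H})$ with $\Gamma=\mathrm{SL}(2,\Z)$ (which produces the cusp form and Eisenstein contributions). First I would introduce, for $n\geq 1$ and $\mathrm{Re}\,s>1$, the Poincar\'e series
\[
P_n(z,s)=\sum_{\gamma\in\Gamma_\infty\backslash\Gamma}(\mathrm{Im}\,\gamma z)^s\, e(n\gamma z),
\]
and compute its Fourier expansion at the cusp $\infty$ via Bruhat decomposition; the non-constant Fourier coefficients are explicit linear combinations of Kloosterman sums $S(n,m,c)$ times Bessel-type integrals in $s$.

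Next, I would form the Petersson inner product of $P_n(\cdot,s_1)$ against the complex conjugate of $P_m(\cdot,\bar{s}_2)$, arranging the conjugation so that the Fourier coefficient at $-m$ of the second factor pairs with the one at $n$ of the first. This is what produces the opposite-sign Kloosterman sums $S(n,-m,c)$. Unfolding one copy against the fundamental domain yields the geometric side: essentially $\sum_c S(n,-m,c)c^{-2s}$ times a Mellin-type integral. On the other hand, Parseval together with the spectral decomposition yields the spectral side: each $\langle P_n(\cdot,s),u_j\rangle$ unfolds via the Fourier expansion of $u_j$ to $\rho_j(n)$ times a Mellin transform of $K_{i\kappa_j}$, which evaluates to a ratio of Gamma functions in $s$ and $\kappa_j$. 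The Eisenstein contribution is computed analogously and generates the factor $(nm)^{-i\eta}\sigma_{2i\eta}(n)\sigma_{2i\eta}(m)/|\zeta(1+2i\eta)|^2$.

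To pass from this identity in the parameters $s_1,s_2$ to the formula with the test function $f$, I would realize $f(4\pi\sqrt{mn}/c)$ as a superposition of the kernels appearing on the geometric side, by means of the Kontorovich-Lebedev inversion formula for the $K$-Bessel integral transform. This converts the Mellin integrals in $s$ on the discrete spectral side into the Bessel transform $\check{f}(\kappa_j)$ defined in \eqref{fcheckkuz}, and similarly for the continuous spectrum, yielding the stated identity with the $4/\pi$ normalization.

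The hard part will be the analytic bookkeeping: justifying absolute convergence, interchanges of sums and integrals, the contour shifts needed to move from the region $\mathrm{Re}\,s>1$ down to the critical line where the Maass spectrum lives, and matching the Gamma factors arising from Mellin transforms of Whittaker and $K$-Bessel functions with the normalization in \eqref{fcheckkuz}. The hypothesis that $f$ is $C^2$ with compact support in $(0,\infty)$ provides enough decay of $\check{f}$ on vertical lines and in the spectral parameter to ensure that all relevant spectral sums and integrals converge absolutely, thereby validating the interchanges.
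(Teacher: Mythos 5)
The paper does not prove this lemma; it is quoted as background, with a bare citation to Deshouillers--Iwaniec \cite[Eq.\ (1.20), Theorem 1]{DI1}, so there is no internal argument for your sketch to be compared against. What you outline is, nonetheless, a recognizably correct high-level plan for one of the classical derivations of the Kuznetsov trace formula via inner products of Poincar\'e series, and it is a genuinely different technical route from the one in the cited source. Deshouillers and Iwaniec build the test function $f$ directly into the Poincar\'e series (summing $f(\mathrm{Im}\,\gamma z)e(n\,\mathrm{Re}\,\gamma z)$), so that unfolding produces $\sum_c S(n,\pm m,c)\,c^{-1}\,(\text{Bessel transform of }f)$ immediately and both sign cases fall out of a single computation; you instead use the Selberg Poincar\'e series $(\mathrm{Im}\,\gamma z)^s e(n\gamma z)$, get an identity with a free complex parameter $s$, and then pass to a general $f$ by Kontorovich--Lebedev inversion. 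Both routes are standard and correct; the Mellin/Kontorovich--Lebedev route is perhaps cleaner for isolating Gamma-factor identities, while the direct test-function route avoids the contour-shifting and analytic-continuation step you flag.

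One point that deserves more care than your sketch allows: the opposite-sign Kloosterman sum $S(n,-m,c)$ and the $K$-Bessel transform (as opposed to $S(n,m,c)$ and $J$-Bessel in the same-sign case) do not come for free from the naive inner product $\langle P_n,P_m\rangle$ of Selberg Poincar\'e series; unfolding that directly produces $S(m,n,c)=S(n,m,c)$. Your phrase ``arranging the conjugation so that the Fourier coefficient at $-m$ of the second factor pairs'' is pointing at exactly the right fix --- you need to pair $P_n(\cdot,s_1)$ against the anti-holomorphic object $\overline{P_m(\cdot,\bar s_2)}$ so that the unfolding selects the $-m$-indexed Fourier coefficient --- but as written it is vague enough that a reader could not tell whether the sign issue had actually been resolved. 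If you flesh this out, make that step explicit, and also be explicit that the continuous-spectrum contribution (the Eisenstein integral with the $\sigma_{2i\eta}$ and $|\zeta(1+2i\eta)|^{-2}$ factors) arises from the incomplete Eisenstein series in the spectral decomposition rather than from a cuspidal projection; in your sketch it is asserted to appear ``analogously'' without explanation. With those two clarifications the proposal is a sound blueprint, though of course the full analytic bookkeeping you defer to the end is substantial.
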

\begin{proof}
See \cite[Eq. (1.20), Theorem 1]{DI1}.
\end{proof}

\begin{lemma}\label{weyl}(\textbf{The Weyl law})
We have,
\be 
N_{SL(2,\Z)}(K) = \l|\{j: |\kappa _j| \leq K \}\r| =  \frac{\textnormal{Vol}(SL(2,\Z) \backslash \h)}{4 \pi} K^2 + O(K\log K). \nonumber
\ee
\end{lemma}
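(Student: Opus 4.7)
The plan is to derive the stated asymptotic from the Selberg trace formula for $SL(2,\Z)\backslash\h$.

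First I would choose an even real-valued test function $h_K$ on $\R$ approximating the characteristic function of $[-K, K]$: say $h_K(r)=1$ on $[-K,K]$, vanishing outside $[-K-1, K+1]$, smooth with $h_K^{(j)}\ll 1$ for each fixed $j$. Its Fourier transform $g_K$ is a Schwartz function whose mass is concentrated near $u=0$; repeated integration by parts gives $|g_K(u)|\ll_j K(1+|uK|)^{-j}$, so on the geometric side of the trace formula only small values of $u$ contribute non-negligibly.

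Second, I would apply the Selberg trace formula for the modular group in its standard form
$$\sum_j h_K(\kappa_j) + \frac{1}{4\pi}\int_{-\infty}^{\infty} h_K(r)\l(-\frac{\varphi'}{\varphi}\r)\!\l(\tfrac12+ir\r)\diff r = I(h_K) + H(h_K) + E(h_K) + P(h_K),$$
where the right-hand side collects the identity, hyperbolic, elliptic and parabolic contributions. The identity term
$$I(h_K) = \frac{\textnormal{Vol}(SL(2,\Z)\backslash\h)}{4\pi}\int_{-\infty}^{\infty} h_K(r)\,r\tanh(\pi r)\diff r$$
produces the desired main term $\frac{\textnormal{Vol}(SL(2,\Z)\backslash\h)}{4\pi}K^2 + O(K)$. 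The hyperbolic contribution, a sum over closed geodesics weighted by $g_K$ evaluated at their lengths, together with the elliptic and parabolic pieces, can be bounded by $O(K\log K)$ by a routine analysis since $g_K$ is essentially supported on $|u|\ll 1/K$. Passing from $\sum_j h_K(\kappa_j)$ to $N_{SL(2,\Z)}(K)$ costs an error equal to the number of spectral parameters in the transition strips $[K,K+1]$ and $[-K-1,-K]$, which is controlled $a$ $posteriori$ by the asymptotic itself.

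The main obstacle, as is typical for non-compact arithmetic quotients, is bounding the scattering contribution. For the modular group the scattering determinant is the explicit function
$$\varphi(s) = \sqrt{\pi}\,\frac{\Gamma(s-1/2)}{\Gamma(s)}\,\frac{\zeta(2s-1)}{\zeta(2s)},$$
so that on the critical line $\textnormal{Re}(s)=1/2$ its logarithmic derivative decomposes into gamma and zeta pieces. The gamma terms contribute $O(\log K)$ pointwise in $r$, hence $O(K\log K)$ when integrated against $h_K$. The zeta pieces $-2\zeta'/\zeta(2ir)$ and $2\zeta'/\zeta(1+2ir)$ are handled via the classical mean-value estimate $\int_0^T |\zeta(1/2+it)|^2 \diff t \ll T\log T$ together with standard bounds for $\zeta'/\zeta(1+it)$, again yielding $O(K\log K)$. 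Assembling these pieces produces $N_{SL(2,\Z)}(K) = \frac{\textnormal{Vol}(SL(2,\Z)\backslash\h)}{4\pi} K^2 + O(K\log K)$, as claimed.
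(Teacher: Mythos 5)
The paper does not supply a proof; this is the content of \cite[Eq.~(11.5)]{IW2}, where a sharper asymptotic (with the $K\log K$ secondary term made explicit) is derived from the Selberg trace formula. Your sketch follows the same route in outline, but the test function you choose is not admissible. The Selberg trace formula for $SL(2,\Z)\backslash\h$ requires $h$ to be even and holomorphic in the strip $|\Im r|\leq\frac{1}{2}+\delta$ with controlled decay there; equivalently, $g=\widehat{h}$ must satisfy $g(u)\ll e^{-(\frac{1}{2}+\delta)|u|}$ so that the hyperbolic sum $\sum_{P}\ell_{P}\,e^{-\ell_{P}/2}g(\ell_{P})$ converges absolutely -- recall that the number of primitive closed geodesics of length at most $L$ grows like $e^{L}/L$. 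A $C^{\infty}$ function that is $1$ on $[-K,K]$ and vanishes off $[-K-1,K+1]$ cannot extend holomorphically to any strip, and its Fourier transform decays only polynomially: $g_K(u)\ll_j K\min(1,|u|^{-j})$ (not the $K(1+|uK|)^{-j}$ you claim, which would require transition width $\asymp 1/K$ and hence $h_K^{(j)}\gg K^{j}$, contradicting $h_K^{(j)}\ll 1$). With such $g_K$, the hyperbolic sum diverges and the trace formula identity does not apply. The standard remedy is to take $h_K$ genuinely holomorphic -- a Gaussian-smoothed indicator of $[-K,K]$, a Fej\'er-type kernel, or the heat kernel followed by a Tauberian argument -- and then recover $N_{SL(2,\Z)}(K)$ from $\sum_{j}h_K(\kappa_j)$ by a short-interval count which is itself controlled by the trace formula.

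Two smaller points in the scattering term: by Stirling, the gamma part of $\frac{\varphi'}{\varphi}(\frac{1}{2}+ir)$, namely $\frac{\Gamma'}{\Gamma}(ir)-\frac{\Gamma'}{\Gamma}(\frac{1}{2}+ir)$, is $O(1/|r|)$ rather than $O(\log K)$; the $K\log K$ error arises entirely from $\frac{\zeta'}{\zeta}(2ir)$, which by the functional equation equals $\frac{\chi'}{\chi}(2ir)-\frac{\zeta'}{\zeta}(1-2ir)$, the $\chi'/\chi$ piece contributing $\asymp\log|r|$. The second-moment bound $\int_0^T|\zeta(\frac{1}{2}+it)|^2\,dt\ll T\log T$ is never used here, since the scattering determinant for $SL(2,\Z)$ involves $\zeta$ only on the lines $\Re s=0$ and $\Re s=1$, not on the critical line.
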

\begin{proof}
See \cite[Eq. (11.5)]{Iw2}.
\end{proof}
\begin{lemma} \label{hec}
For any $\ve > 0$ and all $N \geq 1$, we have
\ba 
\sum_{n \leq N} |\lambda _j(n)|^2 \ll_{\ve} (\kappa_j)^{\ve}N \textnormal{ and } \frac{|\rho_j(1)|^2}{\cosh(\pi \kappa_j)} \ll_{\ve} \kappa_j ^{\ve}.\nonumber
\ea
\end{lemma}
\begin{proof}
See \cite[Lemma 1]{IW1} and \cite[Corollary 0.3]{HL}.
\end{proof}
\begin{lemma}\label{jutila}(\textbf{The local spectral large sieve inequality})
For $1 \leq \Delta \leq K$, we have 
\be 
\sum_{ K \leq \kappa_j \leq K+\Delta} \frac{|\rho_j(1)|^2}{\cosh(\pi \kappa _j )}\l|\sum_{n \leq N} a_n\lambda _j(n) \r| \ll (K\Delta + N)\|a\|^2(KN)^{\ve}.\nonumber
\ee
\end{lemma}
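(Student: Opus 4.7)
The statement as written is dimensionally unusual (linear in $a$ on the left, quadratic in $a$ on the right), so I read it as the standard Jutila local spectral large sieve, in which the inner factor is $|\sum_{n\leq N} a_n \lambda_j(n)|^2$; the one-power version then follows by Cauchy--Schwarz against $\sum_{K\le\kappa_j\le K+\Delta}\frac{|\rho_j(1)|^2}{\cosh(\pi\kappa_j)}\ll K\Delta\,K^{\ve}$, which comes directly from Lemma \ref{hl} and Lemma \ref{weyl}. So the task reduces to proving
\[
\Sigma := \sum_{K\le\kappa_j\le K+\Delta} \frac{|\rho_j(1)|^2}{\cosh(\pi\kappa_j)}\Bigl|\sum_{n\leq N} a_n \lambda_j(n)\Bigr|^2 \ll (K\Delta+N)(KN)^{\ve}\|a\|^2.
\]

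The approach is to run the Kuznetsov formula (Lemma \ref{kuz}) in the backward direction, i.e., from the spectral side to the Kloosterman side. First I would choose a non-negative, even, smooth majorant $h(\kappa)\ge \mathbf{1}_{[K,K+\Delta]}(\kappa)$ of the form $h(\kappa)=\exp\bigl(-((\kappa-K)/\Delta)^2\bigr)+\exp\bigl(-((\kappa+K)/\Delta)^2\bigr)$, so that $h$ is localized in $|\kappa|\asymp K$ at scale $\Delta$ and is the Bessel transform of an explicit test function $f$; estimates on $f(t)=\check{h}{}^{-1}(t)$ and on its derivatives in the ranges $t\ll K/\Delta$, $t\asymp K$, $t\gg K^{1+\ve}$ are standard and can be taken from \cite{DI1}. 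Opening the square and using $\rho_j(\pm n)=\rho_j(\pm 1)\lambda_j(n)$, I rewrite
\[
\Sigma \;\le\; \sum_{m,n\le N} a_m \overline{a_n} \sum_j \frac{h(\kappa_j)}{\cosh(\pi\kappa_j)}\rho_j(m)\overline{\rho_j(n)},
\]
and then apply Kuznetsov. Passing to the arithmetic side produces three pieces: a diagonal/identity piece supported on $m=n$, an Eisenstein piece, and a Kloosterman sum piece
\[
\sum_{m,n\le N} a_m \overline{a_n} \sum_{c\ge 1} \frac{S(m,\pm n;c)}{c}\, f\!\left(\frac{4\pi\sqrt{mn}}{c}\right).
\]

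For the diagonal I use the density of eigenvalues in $[K,K+\Delta]$ (Weyl, Lemma \ref{weyl}) to get $\ll K\Delta\,K^{\ve}\|a\|^2$; the Eisenstein contribution is bounded the same way using $|\zeta(1+2i\eta)|^{-1}\ll\log(2+|\eta|)$. For the off-diagonal, the Weil bound $|S(m,n;c)|\le \tau(c)(m,n,c)^{1/2}c^{1/2}$ combined with the pointwise control of $f$ (which is $\ll \min(\Delta K^{-1}, (K/t)^{A})$ for $t\gg K$ and is oscillatory of amplitude $\ll 1/\sqrt{tK}$ for $t\asymp K$) reduces the $c$-sum to a geometric series whose total mass, after summing against $|a_m a_n|$ by Cauchy--Schwarz, is $\ll N(KN)^{\ve}\|a\|^2$. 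Assembling the three contributions gives the desired $(K\Delta+N)(KN)^{\ve}\|a\|^2$.

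The main obstacle is purely technical: producing sharp enough pointwise and derivative bounds on the Bessel transform $f$ in each of the three ranges $t\ll K/\Delta$, $t\asymp K$, $t\gg K$, so that the resulting $c$-sum can be summed absolutely after the Weil bound without losing more than $(KN)^{\ve}$. This analysis is carried out in detail in Deshouillers--Iwaniec \cite{DI1} and the corresponding local (window) estimate is the one recorded by Jutila; once those ingredients are in hand, the steps above assemble the bound.
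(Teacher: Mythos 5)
The paper's own ``proof'' is a one-line citation to Jutila's Theorem 1.1, so there is nothing in the paper to compare your argument against; the lemma is simply being imported. Your instinct that the displayed inequality has a typo is right: the inner sum should appear squared, $\l|\sum_{n\leq N}a_n\lambda_j(n)\r|^2$, which is what Jutila's Theorem 1.1 actually states and what the paper itself uses in the proof of Prop.~\ref{dis2} (there it is applied to $|\lambda_j(n/k)|^2$, i.e.\ with the square). However, your Cauchy--Schwarz step for deriving the one-power statement does not in fact give the bound as written: applying Cauchy--Schwarz against $\sum_{K\le\kappa_j\le K+\Delta}|\rho_j(1)|^2/\cosh(\pi\kappa_j)\ll K\Delta K^\ve$ yields $\ll K^\ve\sqrt{K\Delta}\cdot\sqrt{(K\Delta+N)}\,\|a\|$, which has a single power of $\|a\|$ and a different dependence on $K\Delta,N$ than $(K\Delta+N)\|a\|^2$. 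So that reduction should be discarded; the statement is simply meant to carry the square.

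For the two-power bound itself, your outline --- choose a nonnegative spectral majorant $h$ concentrated at scale $\Delta$ near $K$, open the square, push through Kuznetsov, separate diagonal/Eisenstein/Kloosterman --- is the right family of ideas and is essentially the Deshouillers--Iwaniec/Jutila framework. Two remarks, though. First, you do not need to bound the Eisenstein contribution: for nonnegative $h$ it is nonnegative and can be dropped. Second, and more importantly, the Kloosterman part is precisely where the work is, and your sketch defers all of it: the assertion that the Weil bound plus ``pointwise control of $f$'' reduces the $c$-sum to a geometric series of total mass $\ll N(KN)^\ve\|a\|^2$ is exactly the content of Jutila's theorem, and the naive decay estimates you quote for the Bessel transform are not by themselves sufficient to obtain the sharp local dependence $K\Delta$ together with the $+N$; that balance requires the careful test-function construction and phase analysis in Jutila's paper. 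As written, your argument is a legitimate roadmap but it proves the lemma only modulo a citation to \cite{J} (or \cite{DI1}) at the key step --- which is, in effect, what the paper also does.
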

\begin{proof}
See \cite[Theorem 1.1]{J}.
\end{proof}

\begin{lemma}\label{ls}
For an arbitrary sequence of complex numbers $(a_n)_{n\leq N}$, we have
\be
\sum_{0 < \kappa_j \leq K }\frac{1}{\cosh(\pi\kappa_j)}
\l| \sum_{n \leq N}a_n\rho_j(n)n^{i\kappa_j}\r|^2 \ll (K^2+N^2)(NK)^{\ve}\sum_{n \leq N}|a_n|^2. \nonumber
\ee
\end{lemma}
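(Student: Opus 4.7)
The plan is to prove the inequality by combining Jutila's local spectral large sieve (Lemma \ref{jutila}) with a smoothed Fourier expansion that decouples $n$ and $\kappa_j$ inside the twisting factor $n^{i\kappa_j}$. First I would partition the spectral window $(0,K]$ into unit-length intervals $I_\ell=[K_\ell,K_\ell+1]$ and on each $I_\ell$ write $\kappa_j=K_\ell+t_j$ with $t_j\in[0,1]$, so that
\[
n^{i\kappa_j}=n^{iK_\ell}\,n^{it_j};
\]
the unimodular factor $n^{iK_\ell}$ can be absorbed harmlessly into $a_n$ without changing $|a_n|$.

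To separate $n$ from $t_j$ in the remaining factor $n^{it_j}$, I would fix a smooth bump $\psi\in C_c^{\infty}(\R)$ with $\psi\equiv 1$ on $[0,1]$ and supported in $[-1,2]$, and use Fourier inversion to write
\[
n^{it_j}=\int_{\R}\widehat{f_n}(\xi)\,e^{2\pi i\xi t_j}\,d\xi,\qquad f_n(t):=\psi(t)n^{it}.
\]
Because $\widehat{f_n}(\xi)=\widehat{\psi}(\xi-\log n/(2\pi))$ is merely a translate of a Schwartz function, one has $\int_{\R}(1+|\xi|)^A|\widehat{f_n}(\xi)|^2\,d\xi\ll_A(\log N)^A$ uniformly for $n\leq N$ and any fixed $A>0$. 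Substituting this representation into the $j$-th summand and applying Cauchy--Schwarz in $\xi$ against the weight $(1+|\xi|)^{-A/2}$ (for any $A>1$) gives
\[
\Bigl|\sum_{n\leq N}a_n\rho_j(n)n^{i\kappa_j}\Bigr|^2\;\ll_A\;\int_{\R}(1+|\xi|)^A\Bigl|\sum_{n\leq N}\bigl(a_n n^{iK_\ell}\widehat{f_n}(\xi)\bigr)\rho_j(n)\Bigr|^2d\xi,
\]
where the inner $n$-sum is now $j$-independent.

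Summing over $\kappa_j\in I_\ell$, swapping with the $\xi$-integral, and invoking Jutila's inequality with $\Delta=1$ on the $n$-sum (valid since every nontrivial $\kappa_j$ satisfies $|\kappa_j|>3.815$, so $K_\ell\geq 1$) produces, via the Hecke relation $\rho_j(n)=\rho_j(1)\lambda_j(n)$, a bound of $(K+N)(NK)^{\ve}\|a\|_2^2$ per interval after interchanging $\xi$- and $n$-integrations and invoking the polynomial-weighted moment bound for $\widehat{f_n}$. Finally, summing over the $\asymp K$ intervals $I_\ell\subset(0,K]$ produces the total bound $(K^2+KN)(NK)^{\ve}\|a\|_2^2$, and the trivial AM--GM inequality $KN\leq(K^2+N^2)/2$ yields the claimed $(K^2+N^2)(NK)^{\ve}\|a\|_2^2$.

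The main technical point is the Cauchy--Schwarz step in $\xi$: it relies crucially on the rapid (Schwartz) decay of $\widehat{f_n}$, which confines its effective support to a window of essentially constant size around $\xi\approx\log n/(2\pi)$. Without the smooth cutoff $\psi$, a naive Fourier series on $[0,1]$ would yield only sinc-type coefficients $c_m(n)$ whose polynomial-weighted moments $\sum_m|m|^A|c_m(n)|^2$ fail to converge, and the Cauchy--Schwarz loss would overwhelm the local large sieve bound. The role of $\psi$ is precisely to reduce this Cauchy--Schwarz loss to a polylogarithmic factor that can be absorbed into the final $(NK)^{\ve}$.
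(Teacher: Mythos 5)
The paper does not prove this lemma; the ``proof'' is a bare citation to Deshouillers--Iwaniec \cite[Theorem~6]{DINON}. Your argument is therefore a genuine self-contained alternative, and after checking it I believe it is correct. The reduction to Jutila's local spectral large sieve (Lemma~\ref{jutila}) is clean: partition $(0,K]$ into $\asymp K$ unit windows $[K_\ell,K_\ell+1]$, absorb the unimodular factor $n^{iK_\ell}$ into $a_n$, and decouple the residual twist $n^{it_j}$ (with $t_j=\kappa_j-K_\ell\in[0,1]$) from $n$ by Fourier inversion against a smooth cutoff $\psi$. Cauchy--Schwarz in $\xi$ with weight $(1+|\xi|)^A$ for any fixed $A>1$ costs only $O_A\bigl((\log N)^A\bigr)$ because $\widehat{f_n}(\xi)=\widehat{\psi}\bigl(\xi-\log n/(2\pi)\bigr)$ is a translated Schwartz function; this loss is absorbed into $(NK)^{\ve}$. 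Applying Jutila's inequality with $\Delta=1$ on each window (admissible since $\kappa_j>3.815$ keeps $K_\ell\geq 1$) together with $\rho_j(n)=\rho_j(1)\lambda_j(n)$ gives $(K_\ell+N)(NK)^{\ve}\sum_n|a_n|^2$ per window, hence $(K^2+KN)(NK)^{\ve}\sum_n|a_n|^2$ in total, which is $\ll(K^2+N^2)(NK)^{\ve}\sum_n|a_n|^2$.

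This route differs from Deshouillers--Iwaniec's original argument, which goes directly through the Kuznetsov trace formula and predates Jutila's local refinement of the spectral large sieve. What your version buys is economy: it is essentially a formal corollary of a tool the paper already has in its inventory (Lemma~\ref{jutila}), and it even delivers the slightly sharper intermediate form $K^2+KN$, which improves on $K^2+N^2$ whenever $K\ll N$. Your remark about the necessity of the smooth cutoff is also well founded: a crude Fourier series of $n^{it}$ on $[0,1]$ yields coefficients decaying only like $1/|m|$, the weighted moments $\sum_m(1+|m|)^A|c_m(n)|^2$ then diverge for $A\geq1$, and the Cauchy--Schwarz loss would be unbounded.
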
 
\begin{proof}
See  \cite[Theorem 6]{DINON}.
\end{proof} 
\noi
Finally, we recall the Weil bound for Klooterman sums (see \cite{IK} for definition).
\be \label{Weil}
\l|S(a,b;c) \r| \leq \tau(c)(a,b,c)^{\frac{1}{2}} c^{\frac{1}{2}}.  
\ee

\subsection{Results from Analysis}
\begin{lemma}\label{perron}
(\textbf{The Perron formula})\\
\be 
\frac{1}{2\pi i} \int _{c - iT}^{c+iT} \frac{x^s}{s}\diff s = h(x) + O\l( x^c\min \l(1, \frac{1}{T|\log x|}\r)\r), 
\textnormal{ where }
h(x) =
\begin{cases}
1, &\text{ if } x>1\\
\frac{1}{2}, &\text{ if } x=1\\
0, &\text{ if } x<1,
\end{cases} \nonumber 
\ee
for any $x > 0$, $x \neq 1$, and any $T , c> 0$, with an absolute implied constant. If $x = 1$, the same equality holds with the error term  replaced by $O(c/T)$. 
\end{lemma}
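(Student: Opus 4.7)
The plan is to establish the Perron formula by classical contour integration, splitting into three cases according to the sign of $\log x$.

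For $x > 1$, I close the contour on the left by adjoining the rectangle with vertices $c \pm iT$ and $-U \pm iT$ for a large parameter $U>0$. The integrand $x^s/s$ has a single simple pole at $s=0$ with residue $1$ inside this rectangle, so Cauchy's residue theorem reduces the computation of
\[
I(x,T) := \frac{1}{2\pi i}\int_{c-iT}^{c+iT}\frac{x^s}{s}\diff s
\]
to estimating the three remaining sides. The left vertical side at $\Re s = -U$ vanishes as $U \to \infty$ since $|x^s/s| \leq x^{-U}/U$ there; the two horizontal sides at $\Im s = \pm T$ are controlled using $|s|^{-1} \leq T^{-1}$ together with $\int_{-\infty}^{c} x^{\sigma}\diff\sigma = x^c/\log x$, giving $|I(x,T) - 1| \ll x^c/(T\log x)$.

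For $0 < x < 1$ the argument is symmetric but I close on the right with vertices $c \pm iT$ and $U \pm iT$; no pole is now enclosed, and the analogous estimate yields $|I(x,T)| \ll x^c/(T|\log x|)$. For $x = 1$ I parametrise directly and use $\log((c+iT)/(c-iT)) = 2i\arctan(T/c)$ combined with $\arctan(T/c) = \pi/2 - \arctan(c/T)$ and $|\arctan(c/T)| \leq c/T$ to get $I(1, T) = 1/2 + O(c/T)$, which is the stated improvement at the discontinuity.

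To promote the error to the sharper $\min(1, 1/(T|\log x|))$ form, I combine the contour-shift bound, which is efficient when $T|\log x| \geq 1$, with a complementary estimate in the ``near $x=1$'' regime $T|\log x| < 1$. There I use $|x^s - 1| \leq |s\log x|\exp(|s||\log x|)$ to compare $I(x,T)$ with $I(1,T)$, which, together with the explicit evaluation at $x = 1$ and the boundedness of $h$, gives $|I(x,T) - h(x)| \ll x^c$ on the trivial side of the minimum. The only real bookkeeping obstacle I foresee is patching the implied constants from the two regimes into a single absolute constant uniform in $x$, $T$, $c$; each individual estimate is elementary and no deeper machinery is required.
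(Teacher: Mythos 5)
The paper does not prove this lemma itself; it simply cites Davenport, Chap.~17, and the standard reference proof is the one I compare against. Your contour shift for $T|\log x|\ge 1$ and the direct computation at $x=1$ are both correct. However, your treatment of the regime $T|\log x|<1$ has a genuine gap in the case $0<x<1$.

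Your plan is to bound $|I(x,T)-h(x)|\ll x^c$ by comparing $I(x,T)$ with $I(1,T)$ via $|x^s-1|\le |s\log x|\,e^{|s||\log x|}$. On the segment $s=c+it$, $|t|\le T$, this gives, after dividing by $|s|$ and integrating,
\be
|I(x,T)-I(1,T)| \ll T|\log x|\cdot \sup_{|t|\le T} e^{|c+it|\,|\log x|} .\nonumber
\ee
For $x>1$ one has $e^{|c+it|\,|\log x|}\le e^{(c+T)\log x}=x^c\,x^T\ll x^c$ (since $T\log x<1$), so indeed $|I(x,T)-1|\ll x^c+1\ll x^c$. But for $0<x<1$ the exponential factor is at best $O(1)$, and $|I(1,T)|=\tfrac1\pi\arctan(T/c)$ can be as large as $\tfrac12$; together these give only $|I(x,T)|=O(1)$, whereas the lemma demands $|I(x,T)|\ll x^c$, which is far smaller than $1$ when $c|\log x|$ is large. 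A concrete counterexample to your estimate: take $T=1$, $x=1/2$ and $c\to\infty$. Then $T|\log x|=\log 2<1$, your bound gives $|I(1/2,1)|\le \tfrac{\log 2}{\pi}+\tfrac{1}{\pi}\arctan(1/c)\asymp 0.22$, but the lemma asserts $|I(1/2,1)|\ll 2^{-c}\to 0$. So the comparison-with-$x=1$ device cannot produce the required $x^c$ decay for $x<1$.

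The standard fix, which is what Davenport (and Titchmarsh) actually do, is to deform the segment $[c-iT,c+iT]$ to the circular arc of radius $R=\sqrt{c^2+T^2}$ centred at the origin and passing through the two endpoints: to the right when $x<1$ (no pole enclosed) and to the left when $x>1$ (picking up the residue $1$ at $s=0$). On that arc $|s|=R$ identically and $\Re s\ge c$ (resp.\ $\le c$), so $|x^s|\le x^c$; the arc has length at most $\pi R$, and the factors of $R$ cancel, giving $|I(x,T)-h(x)|\le \tfrac12 x^c$ with no dependence on $T$ at all. You should replace your comparison step for $0<x<1$ by this arc deformation (and it in fact also cleanly handles $x>1$), keeping your rectangle-plus-horizontal-sides estimate only for the $1/(T|\log x|)$ branch of the minimum.
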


\begin{proof}
See \cite[Chap. 17]{Davenport}.
\end{proof}

\begin{lemma}\label{Stein}(\textbf{The $k$-th derivative bound})
Suppose $\psi \in C^1[a,b]$. Let $\phi$ be a real valued and smooth in $(a,b)$ such that $\l|\phi^{(k)}\r| \geq \lambda _k $ for all $x \in (a,b)$. Then
\be \label{k-der}
\l|\int\limits_a ^ b e^{i\phi(x)} \psi(x)\diff x \r| \leq c_k \lambda _k^{-1/k}\l[\|\psi\|_{\infty}+ \int\limits_ a ^b\l|\psi'(x)\r| \diff x\r]  
\ee holds when $k \geq 2$, or $k =1$ and $\phi'(x)$ is monotonic. Here $c_k>0$ is a constant that depends only on $k$. 
\end{lemma}
\begin{proof}
See \cite[Eq. (6) \& Prop. 2, Chap. 8]{S}.
\end{proof}

\begin{lemma}\label{stationary}(\textbf{The stationary phase lemma})
Let $g(x)$ be real-valued function on $[a,b]$ with $g'(x_0)=0$ for some $x_0 \in [a,b]$. Suppose $0 < \lambda_2 \leq g''(x) $ holds throughout the interval, and in addition $|g^{(3)}(x)| \leq \lambda_3$ and that $|g^{(4)}(x)| \leq \lambda_4$ for all $x \in [a,b]$. Then 
\be 
\int\limits_ a ^ b e(g(x))\diff x = \frac{e(g(x_0)+1/8)}{\sqrt{g''(x_0)}}+ O(R_1)+ O(R_2), \nonumber
\ee 
where 
\ba  R_1=\min\l(\frac{1}{\lambda_2 (x_0-a)},\frac{1}{\sqrt{\lambda_2}}\r)+\min\l(\frac{1}{\lambda_ 2(b-x_0)},\frac{1}{\sqrt{\lambda_2}}\r) \textnormal{ and } 
R_2 = \frac{(b-a)\lambda_4}{\lambda_ 2^2}+\frac{(b-a)\lambda_3^2}{\lambda_2^3}. \nonumber
\ea
If the bound $g''(x)\leq - \lambda_2 <0$ holds throughout the interval, then the main term becomes $
\frac{e(g(x_0)-1/8)}{\sqrt{|g''(x_0)|}}.$
\end{lemma}
\begin{proof}
See \cite[Theorem 9]{M}.
\end{proof}
\begin{lemma}\label{Huxley}(\textbf{The weighted stationary phase lemma})
Let $f(x)$ be a real function, four times continuously differentiable for $\alpha \leq x \leq \beta$, and let $g(x)$ be a real function, three times continuously differentiable for $\alpha \leq x \leq \beta$. Suppose that there are positive parameters $T, P, V, Q,$ with $
Q \geq \beta - \alpha, \quad V \geq Q/\sqrt{P}$, and positive constants $C_r$ such that, for $\alpha \leq x \leq \beta$,
\ba
\l|f^{(r)}(x)\r| \leq \frac{C_r P}{Q^r}, \quad \l|g^{(s)}(x)\r| \leq \frac{C_s T}{V^s} \quad \textnormal{and}\quad f''(x) \geq \frac{P}{C_2 Q^2}, \nonumber
\ea
for $r = 2,3,4$, and $s =0,1 2,3$. Suppose also that $f'(x)$ changes sign from negative to positive at a point $x = x_0$ with $ \alpha < x_0 < \beta$. If $P$ is sufficiently large in terms of $C_2,C_3$, and $C_4$, then we have
\ba 
&\int\limits_{\alpha} ^{\beta}g(x) e(f(x)) \diff x = \frac{g(x_0)e\l(f(x_0)+\frac{1}{8}\r)}{\sqrt{f''(x_0)}} + \frac{g(\beta)e(f(\beta))}{2 \pi i f'(\beta)} - \frac{g(\alpha)e(f(\alpha))}{2 \pi i f'(\alpha)}\\ 
&+ O\l(\frac{Q^4T}{P^2} \l(1+ \frac{Q}{V} \r)^2\l(\frac{1}{(x_0 - \alpha)^3}+ \frac{1}{( \beta - x_0 )^3} \r) \r)
+ O\l( \frac{QT}{P^{3/2}}\l(1+ \frac{Q}{V} \r)^2\r).\nonumber
\ea
\end{lemma}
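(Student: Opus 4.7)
The plan is to decompose the integral at the stationary point, handle the local part by normalizing the quadratic part of $f$ via a change of variables, and handle the two outer pieces by integration by parts, which will simultaneously generate the boundary terms at $\alpha$ and $\beta$.

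I would pick a cutoff $\delta\asymp Q/\sqrt{P}$, so that $f''\delta^2\asymp 1$, and split
\[
\int_{\alpha}^{\beta} = \int_{\alpha}^{x_0-\delta}+\int_{x_0-\delta}^{x_0+\delta}+\int_{x_0+\delta}^{\beta},
\]
with the obvious modifications when $x_0\pm\delta$ falls outside $[\alpha,\beta]$. On the two outer intervals, $f''\geq P/(C_2Q^2)$ together with the monotonicity of $f'$ gives $|f'(x)|\gtrsim P|x-x_0|/Q^2$. A single integration by parts against $e(f(x))$ yields the boundary contributions $g(\beta)e(f(\beta))/(2\pi i f'(\beta))$ and $-g(\alpha)e(f(\alpha))/(2\pi i f'(\alpha))$ stated in the lemma, matching contributions at the interior endpoints $x=x_0\pm\delta$ (to be absorbed by the central piece below), and a remainder $\int (g/f')'\,e(f)\,\diff x$. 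Bounding this remainder with the hypotheses $|g^{(s)}|\leq C_sT/V^s$ and $|f^{(r)}|\leq C_rP/Q^r$, and performing a second integration by parts where needed, accounts for the $O(QT/P^{3/2}(1+Q/V)^2)$ error.

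For the central piece, the equation $f(x)-f(x_0)=u^2/2$, with $u$ taken to have the same sign as $x-x_0$, defines (once $P$ is large in terms of $C_2,C_3,C_4$) a $C^3$-diffeomorphism $x\mapsto u$ on the central window, with $u'(x_0)=\sqrt{f''(x_0)}$. Setting $\tilde g(u)=g(x(u))/u'(x(u))$ converts the central integral into $e(f(x_0))\int e(u^2/2)\tilde g(u)\,\diff u$ over a short $u$-interval. Writing $\tilde g(u)=\tilde g(0)+(\tilde g(u)-\tilde g(0))$ and completing the tails to the whole line via the Fresnel identity $\int_{\R}e(u^2/2)\,\diff u=e(1/8)$ produces the main term $g(x_0)e(f(x_0)+1/8)/\sqrt{f''(x_0)}$. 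The tail contributions (handled by the first-derivative test on $e(u^2/2)$) and the contribution of $\tilde g(u)-\tilde g(0)$ (handled by Taylor expansion combined with the first-derivative test) both shrink like inverse powers of $u$, and plugging in the bounds $|g^{(s)}|\leq C_sT/V^s$ together with the size estimates for the first four derivatives of $x\mapsto u$ (which follow from $|f^{(r)}|\leq C_rP/Q^r$ via Fa\`a di Bruno) produces, after combining with the interior boundary terms at $x_0\pm\delta$ left over from the non-local pieces, the stated $O(\frac{Q^4T}{P^2}(1+Q/V)^2[(x_0-\alpha)^{-3}+(\beta-x_0)^{-3}])$ error.

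The main obstacle is the quantitative control of the change of variables $x\mapsto u$ and of its inverse, up to fourth-order derivatives. One must verify that the cubic and quartic tails of the Taylor expansion of $f$ at $x_0$ do not spoil the normalization $f-f(x_0)=u^2/2$ on the central window, and that the successive derivatives of $\tilde g$ remain bounded by $T/V^s$ times harmless factors in $P,Q$. This is precisely where the assumptions $V\geq Q/\sqrt{P}$ and ``$P$ sufficiently large'' are used, and it is also where most of the bookkeeping lies; once carried out, the two claimed error terms drop out directly from the estimates above.
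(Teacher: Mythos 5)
The paper does not prove this lemma; it cites it directly as Lemma 5.5.6 of Huxley's book (\cite{H}), so there is no internal argument to compare against. What can be assessed is whether your sketch plausibly yields the exact form of the statement, and here there are two real gaps.

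First, your attribution of the $(x_0-\alpha)^{-3}$ and $(\beta-x_0)^{-3}$ error terms is wrong. You say they arise ``after combining with the interior boundary terms at $x_0\pm\delta$,'' but those interior endpoints lie at a fixed distance $\delta\asymp Q/\sqrt P$ from $x_0$ and carry no information about $x_0-\alpha$ or $\beta-x_0$. The distances $x_0-\alpha$, $\beta-x_0$ can only enter through boundary terms at $\alpha$ and $\beta$ themselves. Concretely, a second integration by parts on the outer pieces produces a boundary contribution at $\alpha$ of the shape $(g/f')'/(2\pi i f')$ evaluated at $\alpha$; since $|f'(\alpha)|\gg P(x_0-\alpha)/Q^2$, the dominant piece $gf''/(f')^3$ is of size $T\cdot(P/Q^2)\cdot Q^6/(P^3(x_0-\alpha)^3)=TQ^4/(P^2(x_0-\alpha)^3)$, which matches the stated error (the $(1+Q/V)^2$ factor comes from the $g'/f'$ pieces). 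Your outline as written, with only one full integration by parts and a vague ``second where needed,'' does not produce this structure, and you do not explain why the central (Fresnel) piece — where the errors naturally scale in $\delta$, not in $x_0-\alpha$ — contributes nothing of this form.

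Second, your remainder estimate after one integration by parts is not small enough. On the outer interval, $(g/f')'$ has a term $gf''/(f')^2\lesssim T\,Q^2/(P|x-x_0|^2)$, and integrating this from $x_0+\delta$ to $\beta$ gives roughly $TQ^2/(P\delta)=TQ/\sqrt P$, which is a factor of $P$ too large compared to the claimed $O(QT/P^{3/2}(1+Q/V)^2)$. So the ``second integration by parts where needed'' is not optional: it is essential, and it also produces the boundary contributions discussed above. The correct proof iterates the integration by parts twice, keeping the $\alpha,\beta$ boundary terms from both stages, and then matches the resulting boundary data at $x_0\pm\delta$ against the Fresnel expansion of the central piece. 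Your high-level plan (split at $x_0$, Morse change of variables in the center, integration by parts outside) is the right one and is essentially Huxley's, but the bookkeeping you offer does not yet account for where the two error terms actually come from, and without that the proof is incomplete.
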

\begin{proof}
See \cite[Lemma 5.5.6]{H}.
\end{proof}

\begin{lemma}\label{BKY1}
Let $Y \geq 1, T,Q,U,R > 0$ and suppose  $w$ is a smooth function with support in $[\alpha, \beta]$ satisfying $ 
w^{(j)}(t) \ll_ j TU^{-j} $. Suppose that $h$ is a smooth function on $[\alpha, \beta]$ such that 
$ \l|h'(t)\r|\geq R,\quad \textnormal{ and }\quad h^{(j)}(t) \ll_ j YQ^{-j}, \textnormal{ for } j = 2,3 \dots$, for some $R >0$.
Then the integral $I$ defined by 
\be \label{integralstationary}
I = \int\limits_{\R} w(t)e^{ih(t)} dt \quad \textnormal{ satisfies }\quad
I \ll_ A (\beta - \alpha)T\l[\l(\frac{QR}{\sqrt{Y}}\r)^{-A}+ (RU)^{-A}\r].
 \ee
\end{lemma}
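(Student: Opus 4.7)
The plan is to execute a standard non-stationary phase argument via repeated integration by parts. Since $|h'|\ge R$ on the support of $w$ and $w$ is compactly supported in $[\alpha,\beta]$, I would use the identity $e^{ih(t)} = (ih'(t))^{-1}\frac{d}{dt}e^{ih(t)}$: one integration by parts replaces the amplitude $w$ by $Lw$, where
\begin{equation*}
L f := -\frac{d}{dt}\!\left(\frac{f}{ih'}\right),
\end{equation*}
with boundary terms vanishing by the compact support of $w$. Iterating $A$ times yields
\begin{equation*}
I = \int_{\R} (L^{A} w)(t)\, e^{ih(t)}\, dt,
\end{equation*}
so the task reduces to estimating $\|L^{A} w\|_{\infty}$ on $[\alpha,\beta]$ and multiplying by $\beta-\alpha$.

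A straightforward induction on $A$ shows that $L^{A} w$ is a sum of $O_{A}(1)$ terms of the form
\begin{equation*}
(\text{const})\cdot \frac{w^{(j)}(t)\,\prod_{k}\bigl(h^{(i_{k})}(t)\bigr)^{a_{k}}}{h'(t)^{A+\sum_{k} a_{k}}},\qquad j,\,a_{k}\ge 0,\ i_{k}\ge 2,\ j+\sum_{k}(i_{k}-1)a_{k}=A.
\end{equation*}
Inserting the hypothesized bounds $|w^{(j)}|\ll_{j} TU^{-j}$, $|h^{(i)}|\ll_{i} YQ^{-i}$ (for $i\ge 2$), and $|h'|\ge R$, and writing $a=\sum_{k} a_{k}$, each such term is bounded by $O_{A}\bigl(T\, U^{-j}\,Y^{a}\,Q^{-(A-j+a)}\,R^{-(A+a)}\bigr)$.

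Now set $\mu:=(RU)^{-1}$ and $\nu:=\sqrt{Y}/(QR)$. A short algebraic rearrangement rewrites the preceding bound as $T\,\mu^{j}\,\nu^{A-j+a}\,Y^{(a-A+j)/2}$. The constraint $j+\sum_{k}(i_{k}-1)a_{k}=A$ together with $i_{k}\ge 2$ forces $a\le A-j$, hence the exponent of $Y$ is non-positive, and this factor may be dropped using $Y\ge 1$. Writing $p:=j$ and $q:=A-j+a$, we have $p+q=A+a\ge A$. In the productive regime $\mu,\nu\le 1$ (the only one where the claim has content, since otherwise the stated right-hand side already exceeds the trivial bound $(\beta-\alpha)T$),
\begin{equation*}
\mu^{p}\,\nu^{q}\,\le\,\max(\mu,\nu)^{\,p+q}\,\le\,\mu^{A}+\nu^{A}.
\end{equation*}
Summing the $O_{A}(1)$ terms and integrating over $[\alpha,\beta]$ then delivers the stated inequality.

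The only real obstacle is the Leibniz-type bookkeeping that produces the general form of $L^{A}w$; once this form is in hand the bound is mechanical. The conceptual point that keeps the final answer clean is the asymmetry between the two elementary gains per IBP: differentiating $w$ gains $(RU)^{-1}=\mu$, whereas differentiating a $1/h'$ factor necessarily produces a new $h^{(i)}$ with $i\ge 2$, gaining at worst $Y/(Q^{2}R^{2})=\nu^{2}$. Thus each integration by parts raises the combined exponent $p+q$ by at least $1$, and after $A$ steps the two extremal powers $\mu^{A}$ and $\nu^{A}$ absorb all the cross terms.
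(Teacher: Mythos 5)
Your proof is correct. The paper does not prove this lemma; it cites \cite[Lemma 8.1]{BKY}, whose proof is exactly the repeated non-stationary-phase integration by parts you carry out, so your argument is essentially the same as the one behind the reference. Your bookkeeping is sound: the Leibniz form of $L^A w$ and the constraint $j+\sum_k(i_k-1)a_k=A$ are verified by a straightforward induction, the inequality $a\le A-j$ (hence a non-positive power of $Y$, dropped via $Y\ge 1$) is correct, and the reduction to the two extremal monomials $\mu^A$ and $\nu^A$ via $p+q=A+a\ge A$ and $\mu^p\nu^q\le\max(\mu,\nu)^{p+q}$ in the regime $\mu,\nu\le 1$ (with the complementary regime handled by the trivial bound $|I|\le(\beta-\alpha)T$) is clean and complete.
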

\begin{proof}
See \cite[Lemma 8.1]{BKY}.
\end{proof}

\begin{lemma}\label{BKY2}
Let $0 < \delta <\frac{1}{10},T,Y,V,V_1,Q >0,$ $Z= Q+X+Y+V_1+1$ and assume that 
\be \label{bound}
Y \geq Z^{3\delta},\quad\quad V_1\geq V \geq \frac{QZ ^{\frac{\delta}{2}}}{\sqrt{Y}}. 
\ee 
Suppose $w$ is a smooth function on $\R$ with support on an interval $J$ of length $V_1$ and  satisfies  $w^{(j)}\ll_ j T V^{-j}$, for all $j \in \N_0$, and $h$ is a smooth function on $J$ such that there exists a unique point $t_0 \in J$ with  $h'(t_0)=0$, and suppose, furthermore, that 
$
h''(t) \gg YQ^{-2}, h^{(j)}(t)\ll_ j YQ^{-j}, \text{ for } j = 1,2,3,\dots,t \in J.
$
Suppose also that $h'(t)$ changes sign from negative to positive at a point $t = t_0$. Then the integral $I$ defined in \eqref{integralstationary} has an asymptotic expansion of the form
\be \label{BKY3}
I = \frac{e^{ih(t_0)}}{\sqrt{h''(t_0)}}\sum\limits_{n \leq 3\delta^{-1}A} p_n(t_0)+ O_{A,\delta}\l(Z^{-A}\r),\ee where
\be \label{BKY4}
p_n(t_0) = \frac{\sqrt{2\pi}e ^{\frac{\pi i}{4}}}{n!}
\l(\frac{i}{2h''(t_0)}\r)^n G^{(2n)}(t_0),\text{ } G(t) = w(t)e^{iH(t)}, \textnormal{ and } H(t)= h(t)- h(t_0)-\frac{h''(t_0)}{2} (t-t_0)^2,
\ee 
for some arbitrary $A > 0$. Furthermore, each $p_n$ is a rational function in $h'', h''',\dots,$ satisfying 
\be \label{derivative} 
\frac{d^j}{dt_0^j}p_n(t_0) \ll_{j,n}T \l(V^{-j}+Q^{-j}\r)\l(\l(\frac{V^2Y}{Q^2}\r)^{-n} + Y^{-\frac{n}{3}}\r).
\ee
\end{lemma}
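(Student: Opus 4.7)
The plan is to follow the classical stationary phase / steepest descent strategy, using Lemma \ref{BKY1} (the non-stationary phase bound) as a black box for the truncation step, and then a Taylor-and-Fresnel computation on a neighborhood of $t_0$.

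First I would \emph{localize} the integral to a small window around $t_0$. Since $h''(t) \gg Y/Q^{2}$, for $|t-t_0|$ larger than roughly $QZ^{\delta/2}/\sqrt{Y}$ we have $|h'(t)| \gtrsim Z^{\delta/2}\sqrt{Y}/Q$, which together with the hypothesis $V \geq QZ^{\delta/2}/\sqrt Y$ puts us squarely in the regime where Lemma \ref{BKY1} applies. Choosing $A$ large in that lemma shows the integral restricted to $|t-t_0|\geq V_0 := QZ^{\delta/2}/\sqrt Y$ is $O_{A,\delta}(Z^{-A})$, at the price of a $Z^{O(\delta)}$ loss, which is absorbed. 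So up to negligible errors we may assume $w$ is supported in $[t_0-V_0,\,t_0+V_0]$.

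Next I would \emph{normalize the phase}. Writing $h(t)=h(t_0)+\tfrac12 h''(t_0)(t-t_0)^2+H(t)$, one has by Taylor's theorem $H^{(j)}(t)\ll YQ^{-j}$ (with $H$ vanishing to order three at $t_0$), and in particular on the window $|t-t_0|\leq V_0$ one controls $H$ by $|H(t)|\ll Y|t-t_0|^3/Q^3$. Setting $G(t)=w(t)e^{iH(t)}$ converts the problem to
\[
I = e^{ih(t_0)}\int_{\R}G(t)\,e^{i\alpha(t-t_0)^2/2}\,dt + O_{A,\delta}(Z^{-A}),\qquad \alpha:=h''(t_0).
\]
The product rule, together with the given derivative bounds on $w$ and $h$ and the condition $V\geq QZ^{\delta/2}/\sqrt Y$, gives $G^{(j)}(t_0)\ll_j T\bigl(V^{-j}+Q^{-j}\bigr)$ with analogous combined bounds for mixed terms; this is also what ultimately feeds into \eqref{derivative}.

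The third step is the \emph{Taylor--Fresnel expansion}. I would Taylor expand $G$ to order $N=\lfloor 3\delta^{-1}A\rfloor$ about $t_0$, multiply by $e^{i\alpha u^2/2}$ with $u=t-t_0$, and integrate term by term over $\R$ (extending from the window back to $\R$ costs $O(Z^{-A})$ via a standard Gaussian tail estimate, since $|\alpha|V_0^{2}\gtrsim Z^{\delta}$). The classical Fresnel integrals
\[
\int_{\R}u^{2n}\,e^{i\alpha u^2/2}\,du \;=\; \frac{\sqrt{2\pi}\,e^{i\pi/4}}{\sqrt{\alpha}}\;\bigl(i/\alpha\bigr)^{n}\,\frac{(2n)!}{n!\,2^{n}}
\]
(odd moments vanishing) produce the main term with $p_n(t_0)$ exactly as in \eqref{BKY4}. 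The remainder from the Taylor expansion is controlled by bounding $G^{(N+1)}$ via Faà di Bruno on $we^{iH}$ and integrating $|u|^{N+1}|e^{i\alpha u^2/2}|$ against the Gaussian envelope; the hypothesis $Y\geq Z^{3\delta}$ ensures that each extra order of the expansion gains a factor $\ll Z^{-\delta/3}$ or better, so taking $N\gg A/\delta$ makes this remainder $O_{A,\delta}(Z^{-A})$.

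Finally, the derivative bound \eqref{derivative} on $p_n(t_0)$ in its variables (here $t_0$ is really a proxy for the external parameters on which $w$ and $h$ depend) follows by differentiating the explicit formula \eqref{BKY4} and using the product/chain rules together with the given control on $w^{(j)}$ and $h^{(j)}$; the two terms $V^{-j}$ and $Q^{-j}$ reflect differentiation landing on $w$ versus on powers of $h''(t_0)$ or on $H^{(k)}(t_0)$, and the factor $(V^2Y/Q^2)^{-n}+Y^{-n/3}$ tracks the two ways $n$ copies of $H$-derivatives can appear in $G^{(2n)}(t_0)$ (once from the ``cubic'' Taylor coefficient, once from higher ones).

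The main obstacle is the \textbf{error bookkeeping}: balancing the truncation error from Step~1, the Taylor-remainder error from Step~3, and the Gaussian-tail error when extending the truncated integral to all of $\R$, so that all three collapse to $O_{A,\delta}(Z^{-A})$ \emph{uniformly} in the external parameters. This is where the hypotheses $Y\geq Z^{3\delta}$ and $V\geq QZ^{\delta/2}/\sqrt{Y}$ are used in an essential and slightly delicate way, and where getting \eqref{derivative} with the precise shape stated (rather than a crude bound) requires organizing the Faà di Bruno expansion of $(we^{iH})^{(2n)}$ carefully.
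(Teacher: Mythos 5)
The paper's ``proof'' of Lemma~\ref{BKY2} is simply the citation to \cite[Prop.~8.2]{BKY}, so there is nothing in this paper to compare against directly; but your sketch correctly reproduces the strategy of the cited source: localize to a $QZ^{\delta/2}/\sqrt{Y}$-window around $t_0$ via the non-stationary phase estimate (Lemma~\ref{BKY1}), pull out the quadratic part of the phase, Taylor-expand $G=we^{iH}$, and integrate against the Fresnel kernel. Your Fresnel moment computation is correct and reproduces the stated $p_n(t_0)$ exactly, including the constant $\sqrt{2\pi}e^{i\pi/4}(i/2h'')^n/n!$, and your explanation of how $Y\geq Z^{3\delta}$ and $V\geq QZ^{\delta/2}/\sqrt{Y}$ make the truncation, Gaussian-tail, and Taylor-remainder errors all $O_{A,\delta}(Z^{-A})$ is the right bookkeeping (one small technical point you elide: the truncation should be done with a smooth partition of unity so that Lemma~\ref{BKY1} applies to a genuinely smooth weight, and verifying \eqref{derivative} requires a Young-type interpolation to absorb cross terms like $w'H'''/(h'')^2$ into $(V^2Y/Q^2)^{-n}+Y^{-n/3}$, but both are standard).
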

\begin{proof}
See \cite[Prop. 8.2]{BKY}.
\end{proof}

\section{Initial reduction and the main steps of the proof}\label{reduction}
To prove Theorem \ref{main}, it is enough to prove that for $0<|r|\ll X^{1/3}$,
\be \label{toshow}
S_r(X) = \mathop{\sum\sum\sum\sum}_{\substack{1 \leq a,b,c,d \leq X \\ ad- bc =r}}1
=\frac{2}{\zeta(2)}\frac{\sigma(|r|)}{|r|}X^2+O(r^{\theta}X^{3/2+\ve}),
\ee
since $\mathcal{S}_r(X)=8S_r(X)$ by symmetry. The condition on $r$ will be required much later and some of the initial propositions hold for any non-zero integer $r$. Interpreting the equation  as a congruence modulo $a$, we write
\[
S_r(X) = \mathop{\sum\sum\sum}_{\substack{1 \leq a,b,c \leq X \\ bc \equiv - r \moda \\ a-r \leq bc \leq aX-r}}1
 = \mathop{\sum\sum\sum}_{\substack{1 \leq a,b,c \leq X \\ bc \equiv - r \moda \\ a \leq bc \leq aX}}
+ O\l((|r|+X)X^{\ve}\r); \text{  i.e.,}
\]
\be\label{SrX}
 S_r(X) =  S_r^{(0)}(X)+ O\l((|r|+X)X^{\ve}\r),
 \ee
 say. 
We approximate $S_r^{(0)}(X)$ by a weighted version. Let $H$ be a parameter to be fixed later subject to the condition 
\be\label{H-size}
\sqrt{X}\leq H\leq X.
\ee
Let $w:\R\to \R$ be a smooth function satisfying the  conditions
\begin{align}
&(1)  \ w(x)=0\textnormal{ if } x\not\in (1-H, X+H),\label{w0}\\
& (2) \  w(x)=1\textnormal{ if } x\in [1, X], \label{w1}\\
& (3) \ 0<w(x)<1 \textnormal{ if } x\in (1-H, 1)\cup (X, X+H),\label{w-support}\\
& (4)\ w^{(j)}(x) \ll_j H^{-j}\textnormal{ for every } j\geq 1 \label{w-prime}.
\end{align}
Then, we clearly have 
\be\label{Sr0X}
S_r^{(0)}(X)=S_w(X, r)+O\l(HX^{1+\ve}\r),
\ee
where
\be\label{defSrwX}
S_w(X, r)= \mathop{\sum\sum\sum}_{\substack{ bc \equiv - r \moda \\ c \neq 0 \\ a>0}}  w(a)w(b)w(c)w\l(\frac{bc}{a}\r).
\ee
Let us define $l = (a,c), r_1=r/l, c_1=c/l, a_1=a/l$ and for any integer $a\neq 0$ and any integer $c$, we define
\be \label{F(x)}
F_{a,c}(x):=w(x)w\l(\frac{cx}{a}\r). 
\ee
Applying Poisson summation to the $b$-sum and separating the zero frequency, we obtain
\ba\label{SW-decomp}
S_w(X, r) &=  \sum_{l|r}\mathop{\sum\sum}_{\substack{(a_1,c_1) = 1\\c_1\neq 0 \\ a_1 >1}} \frac {w(la_1)} {a_1} w(lc_1)\widehat{F_{a_1,c_1}}\left(0\right)+\sum_{l|r}\mathop{\sum\sum}_{\substack{(a_1,c_1) = 1\\c_1\neq 0 \\ a_1 >1}}\frac {w(la_1)} {a_1} w(lc_1) \sum_{\substack{n\in \Z\\ n\neq 0}}e\left(\frac{-nr_1\Bar{c_1}}{a_1}\right)\widehat{F_{a_1,c_1}}\left(\frac{n}{a_1}\right)\\
& =A_w(X, r)+B_w(X, r),
\ea
say. 
\begin{proposition} \label{MT}
We have,
\be \label{awx}
A_w(X, r)= \frac{2}{\zeta(2)}\frac{\sigma(|r|)}{|r|}X^2+ O\l(\frac{\sigma(|r|)}{|r|}HX^{1+\ve}\r).
\ee
\end{proposition}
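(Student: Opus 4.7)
The proposition concerns the zero-frequency contribution produced by the first Poisson summation, so the work is largely elementary: compute $\widehat{F_{a_1,c_1}}(0) = \int_\R w(x)\,w(c_1 x/a_1)\,\diff x$ asymptotically, then sum against $(1/a_1)\,w(la_1)\,w(lc_1)$. My plan is to split the $c_1$-sum by sign, extract a closed form on the positive side, reduce the main contribution to a Euler-$\phi$ average, and finally control all the error terms.

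\textbf{Step 1: Analysis of $\widehat{F_{a_1,c_1}}(0)$.} I would first separate the $c_1>0$ and $c_1<0$ contributions. For $c_1<0$, the product $w(x)\,w(c_1 x/a_1)$ is supported on a set of measure $O(H(1+a_1/|c_1|))$, since whenever both factors are nonzero at least one of them must lie in the transition strip $(1-H,1)\cup(X,X+H)$. Combined with the support restrictions $a_1\ll X/l$ and $|c_1|\ll H/l$, a direct computation shows that the total $c_1<0$ contribution is $O(\sigma(r)/r\cdot HX^{1+\ve})$. For $c_1>0$, the ``core'' region where $w(x)=w(c_1 x/a_1)=1$ is the interval $[\max(1,a_1/c_1),\min(X,a_1 X/c_1)]$, while the two transition regions have total length $O(H(1+a_1/c_1))$, yielding
\[
\widehat{F_{a_1,c_1}}(0) \;=\; \bigl[\min(X,a_1 X/c_1)-\max(1,a_1/c_1)\bigr]_{+} + O\bigl(H(1+a_1/c_1)\bigr).
\]

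\textbf{Step 2: Main term extraction.} The crucial algebraic identity is
\[
\frac{1}{a_1}\bigl[\min(X,a_1 X/c_1)-\max(1,a_1/c_1)\bigr] \;=\; \frac{X}{\max(a_1,c_1)} - \frac{1}{\min(a_1,c_1)},
\]
valid throughout the range $1\leq a_1,c_1\leq X/l$. After replacing the smooth cutoffs $w(la_1),w(lc_1)$ by sharp indicators of $[1,X/l]$ (introducing an admissible $O(\sigma(r)/r\cdot HX^{1+\ve})$ error via the transition intervals of length $H/l$), the leading contribution becomes
\[
X\sum_{l\mid r}\sum_{\substack{(a_1,c_1)=1\\1\leq a_1,c_1\leq X/l}}\frac{1}{\max(a_1,c_1)}.
\]
Grouping the inner sum by $M:=\max(a_1,c_1)$, there are exactly $2\phi(M)$ coprime pairs with $\max=M$ for each $M\geq 2$ (one ordering with $a_1=M$, one with $c_1=M$, with no double count since $(M,M)\neq 1$). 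Combined with the standard estimate $\sum_{M\leq Y}\phi(M)/M=Y/\zeta(2)+O(\log Y)$, the inner sum equals $2(X/l)/\zeta(2)+O(\log X)$. Finally, $\sum_{l\mid r}1/l=\sigma(r)/r$ delivers the main term $\frac{2}{\zeta(2)}\frac{\sigma(r)}{r}X^2$.

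\textbf{Step 3: Error bookkeeping.} The remaining pieces are all of admissible size. The $-1/\min(a_1,c_1)$ piece contributes $\sum_{l\mid r}(X/l)\log X\ll \sigma(r)/r\cdot X^{1+\ve}$; the smoothing error $O(H(1+a_1/c_1))$ in $\widehat{F_{a_1,c_1}}(0)$, divided by $a_1$ and summed, gives $O(HX^{1+\ve}\sigma(r)/r)$; the secondary $O(\log X)$ error in the inner-sum asymptotic, multiplied by $X$ and summed over $l\mid r$, yields $O(X^{1+\ve}\tau(r))=O(X^{1+\ve})$; and the excluded $a_1=1$ terms contribute $\ll X\log X\cdot\tau(r)=O(X^{1+\ve})$. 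Adding everything gives the claimed bound $O(\sigma(r)/r\cdot HX^{1+\ve})$. Since this evaluates only the zero frequency, no deep input is needed: the main obstacle is simply careful bookkeeping of the several error sources, each individually straightforward.
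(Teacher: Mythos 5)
Your proof is correct and follows essentially the same elementary strategy as the paper: isolate the region where $w\equiv 1$, reduce to a coprime-pair count evaluated via $\sum_{M\leq Y}\phi(M)/M = Y/\zeta(2) + O(\log Y)$, and absorb the transition contributions of width $O(H)$ into the error term. Your $\max/\min$ identity and the grouping by $M=\max(a_1,c_1)$ with the $2\phi(M)$ count give a slightly cleaner unification of what the paper treats as the separate cases $a_1\leq c_1$ and $a_1>c_1$ (its pieces $A_{1,w}$ and $A_{3,w}$, evaluated by M\"obius inversion), but the underlying computation is the same.
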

 This will be proved in the next section. The following intermediate proposition ``prepares" $B_w(X, r)$ for an application of the Kuznetsov formula. First we  introduce the notations:
\ba \label{uniformphase}
\mathop{\sum_{l, m,n}}:= \sum_{l|r}\mathop{\sum_{1 \leq m \ll \frac{X^{1+\ve}}{H}}\sum_{1 \leq n \ll \frac{X^{1+\ve}}{lH}}}, \textnormal{ and }\mathcal{H}(t):=\mathcal{H}(m, n,t) = -2 \sqrt{\frac{mnt}{a_1}}-\frac{1}{8}.
\ea 
\begin{proposition}\label{ET} (\textbf{The first main proposition})
We have,
\ba 
&B_w(X, r) =  \frac{ X^{\frac{1}{4}}}{2\sqrt{2}\pi i} \mathop{\sum_{l, m,n}}_{ nl \leq m} \sum_ {a_1 >0}  \frac{ S(nr_1, -m, a_1)}{a_1}\l(\frac{a_1}{mn}\r)^{\frac{3}{4}}
\l(e\l( \mathcal{H}(X) \r) - e\l( - \mathcal{H}(X) \r)\r)  w \l(\frac{ma_1}{n}\r) \\
& +   \frac{ X^{\frac{1}{4}}}{2\sqrt{2}\pi i} \mathop{\sum_{l, m,n}} _{m < nl} \sum_ {a_1 >0}  \frac{ S(nr_1, -m, a_1)}{a_1}\l(\frac{a_1}{mn}\r)^{\frac{3}{4}}
\l(e\l( \mathcal{H}(X) \r) - e\l( - \mathcal{H}(X) \r)\r) w\l(\frac{nl^2a_1}{m}\r) \\
& + \frac 1{\sqrt{2}} \int \limits_{X} ^{ X+ H} \frac{w(u)}{u^{1/4}}  \mathop{\sum_{l, m,n}}_{ nlX \leq mu}  \sum_ { a_1 > 0}  \frac{ S(nr_1, -m, a_1)}{a_1} \l(\frac{a_1}{mn}\r)^{\frac{1}{4}} 
 \l( e\l( \mathcal{H}(u) \r) + e\l( -\mathcal{H}(u) \r)\r) w\l( \frac{ma_1u}{nX}\r)\\
& + \frac 1{\sqrt{2}} \int \limits_{X} ^{ X+ H} \frac{w(u)}{u^{1/4}}
  \sum_{l |r} \mathop{\sum_{l, m,n}}_{mu < nlX}  \sum_ { a_1 >0 }  \frac{S(nr_1, -m, a_1)}{a_1} \l(\frac{a_1}{mn}\r)^{\frac{1}{4}} 
 \l( e\l( \mathcal{H}(u) \r) + e\l( -\mathcal{H}(u) \r)\r) w\l( \frac{nl^2a_1X}{mu}\r)\\
& + O\l(X^{\ve}\l(X^{\frac{3}{2}} + X^{\frac{5}{4}}\sqrt{\frac{X}{H}}+ \frac{X^2}{H} + \l(\frac{X}{H}\r)^{3} + \frac{X^{\frac{5}{2}}}{H^2} \r)\r).
\nonumber
\ea
We express the right hand side more simply as
\[
 \l( R_{1,w}(X)+  R'_{1,w}(X) \r)+ \l( R_{2,w}(X)+  R'_{2,w}(X) \r)+ \l( R_{3,w}(X)+  R'_{3,w}(X) \r)+\l( R_{4,w}(X)+  R'_{4,w}(X) \r)+ O(\dots),
\]
\end{proposition}
\begin{remark}\label{5/3}
An application of  \eqref{Weil} at this stage yields 
\ba  \nonumber
B_w(X, r) \ll  X^{\ve}\l(\frac{X^{2}}{\sqrt{H}}+ X^{\frac{3}{2}}+ X^{\frac{5}{4}}\sqrt{\frac{X}{H}} + \frac{X^2}{H}+ \l(\frac{X}{H}\r)^{3} + \frac{X^{\frac{5}{2}}}{H^2} \r).
\ea
Now, from \eqref{SrX}, \eqref{SW-decomp}, and Prop. \ref{MT} with the 
optimal choice for $H = X^{2/3}$,  we obtain
\ba \nonumber
{S_r} (X)=  \frac{2}{\zeta(2)}\frac{\sigma(|r|)}{|r|}X^2 + O\l(  \max \l(X^{5/3}, |r|\r)X^{\ve}\r).
\ea
\end{remark}
\noindent
The next step is to exploit cancellations in the sum of Kloosterman sums using tools from the spectral theory of automorphic forms and the outcome is the next proposition.
\begin{proposition}\label{second prop}(\textbf{The second main proposition})
We have, for $0\neq r\ll X^{1/3}$,
\ba
B_w(X, r) &\ll X^{\ve} \l[ X^{\frac{3}{2}} + X^{\frac{5}{4}}\sqrt{\frac{X}{H}}+ \frac{X^2}{H} + \l(\frac{X}{H}\r)^{3}+ \frac{X^{\frac{5}{2}}}{H^2}  
+ r^{\theta}  \l(X\l(\frac{X}{H}\r) + X^{\frac{1}{2}} \l( \frac{X}{H}\r)^2 + X^{\frac{1}{3}+\frac{1}{4}}\l( \frac{X}{H}\r)^{\frac{3}{2}} \r) \r] . \nonumber
\ea
In particular, for the choice $H=\sqrt{X}$, 
\[
B_w(X, r)\ll r^{\theta}X^{\frac{3}{2}+\ve}. 
\]
\end{proposition}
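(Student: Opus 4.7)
The plan is to apply the Kuznetsov formula (Lemma~\ref{kuz}) to the inner $a_1$-sum in each of the four Kloosterman-type expressions appearing in Proposition~\ref{ET}, and then exploit cancellation in the resulting spectral averages through the large sieve inequalities in Lemmas~\ref{ls} and~\ref{jutila}. The non-$r^\theta$ error terms in the claimed bound are already present in Proposition~\ref{ET}, so the new work is entirely on the Kloosterman-sum contribution, and must improve on the direct Weil bound of Remark~\ref{5/3} by roughly a square-root. For the choice $H=\sqrt X$ each of the eight error shapes in the statement collapses to $X^{3/2+\ve}$ or better, giving $B_w(X,r)\ll r^\theta X^{3/2+\ve}$.

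After a dyadic decomposition in $m\sim M$ and $n\sim N$, the inner $a_1$-sum in each of the four expressions can be rewritten, via the change of variable $t=4\pi\sqrt{mn}/a_1$, in the form
\[
\sum_{c>0}\frac{S(nr_1,-m,c)}{c}\,f_{m,n}\!\left(\frac{4\pi\sqrt{mn}}{c}\right),
\]
where $f_{m,n}$ is a compactly supported $C^2$ function built from the oscillatory factor $\sin(-2\sqrt{mnX/a_1}-1/8)$, the amplitude $(a_1/mn)^{3/4}$ (respectively $(a_1/mn)^{1/4}$ for the third and fourth sums, which involve an outer integral over $u\in[X,X+H]$), and the smooth cutoff inherited from $w$. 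Lemma~\ref{kuz} then transforms this into a spectral average
\[
\sum_{j}\rho_j(nr_1)\overline{\rho_j(m)}\,\check f_{m,n}(\kappa_j)\;+\;(\text{continuous spectrum}).
\]
Using the Hecke relations \eqref{heckerelation}--\eqref{realtion} together with the Kim--Sarnak bound~\eqref{ramanujan}, I would write $\rho_j(nr_1)=\rho_j(1)\sum_{d\mid(n,r_1)}\lambda_j(nr_1/d^2)$, and the factor $r^\theta$ in the final bound is extracted at this stage; the continuous spectrum is analogous but easier, with $|\zeta(1+2i\eta)|^{-2}\ll\log^2(2+|\eta|)$ and the divisor-type factors $\sigma_{2i\eta}$ absorbed into $X^\ve$.

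The heart of the proof is the asymptotic analysis of $\check f_{m,n}(\kappa_j)$. Inserting a standard integral representation of $K_{2i\kappa}(t)$ and switching the order of integration produces a combined phase involving $\sqrt t$ (from the original $-2\sqrt{mnX/a_1}$, which in the $t$-variable is proportional to $\sqrt{Xt}\,(mn)^{1/4}$) and $\kappa\log t$ (from the Bessel kernel; the transition regime $t\asymp 2\kappa$ requires the uniform asymptotics but is handled similarly). Applying Lemmas~\ref{BKY1} and~\ref{BKY2}, outside a narrow spectral window repeated integration by parts forces rapid decay in $\kappa_j$; inside the window, the unique stationary point $t_0\asymp\kappa_j^{2}/(X\sqrt{mn})$ produces an expansion whose leading term has shape
\[
\check f_{m,n}(\kappa_j)\;\sim\;\frac{C\,(mn)^{\pm i\kappa_j}}{\sqrt{\kappa_j}}\,(\text{smooth amplitude}),
\]
so that the twists $m^{\pm i\kappa_j}$ and $n^{\pm i\kappa_j}$ split off cleanly. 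This brings Deshouillers--Iwaniec's twisted large sieve, Lemma~\ref{ls}, into play. After a dyadic partition $\kappa_j\asymp K$ and Cauchy--Schwarz in $j$, I would apply Lemma~\ref{ls} to both the $m$- and $n$-sums in the spectral range where the phase extraction is valid, and switch to Jutila's local large sieve (Lemma~\ref{jutila}) together with Weyl's law (Lemma~\ref{weyl}) and the Hecke bound (Lemma~\ref{hec}) in the complementary ranges. The three $r^\theta$ shapes $X(X/H)$, $X^{1/2}(X/H)^2$, and $X^{1/3+1/4}(X/H)^{3/2}$ then arise from three corresponding spectral regimes, namely small $K$, large $K$, and an intermediate transition.

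The main obstacle is producing \emph{exactly} the clean phase $(mn)^{\pm i\kappa_j}$ in $\check f_{m,n}(\kappa_j)$: any residual $a_1$-dependence in that exponent would obstruct the use of Lemma~\ref{ls}, and any multiplicative cross-coupling of $m$ and $n$ in it would weaken the subsequent Cauchy--Schwarz step. Achieving this requires carefully tracking stationary phases through two nested integrals, and the error terms coming out of Lemma~\ref{BKY2} must all be shown to be dominated by the non-$r^\theta$ contributions already present in the statement. A secondary, mostly bookkeeping, difficulty is matching the outputs of the dyadic decompositions in $M$, $N$, $K$ across the four sums of Proposition~\ref{ET} so that the three distinct $r^\theta$ shapes emerge in the correct spectral regimes and, under the choice $H=\sqrt X$, everything collapses to the advertised $r^\theta X^{3/2+\ve}$.
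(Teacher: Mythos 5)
Your outline matches the paper's strategy: Prop.~\ref{ET} plus dyadic decomposition in $m\sim M$, $n\sim N$, $c\sim C$, Kuznetsov, stationary phase in the Bessel transform to expose the twist $(mn)^{\pm i\kappa_j}$, and then Lemma~\ref{ls}/Lemma~\ref{jutila}/Weyl to close. The one mild divergence is cosmetic: the paper uses the \emph{power series} of $K_{2i\eta}$ rather than an integral representation, which cleanly splits the spectral side into a piece $\mathcal D_+,\mathcal C_+$ with no stationary point (handled by the first-derivative bound and Weyl alone) and the genuinely hard piece $\mathcal D_-,\mathcal C_-$.

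Two places where your outline stops at the obstacle rather than resolving it, and which constitute the real content of the paper's proof, are worth flagging. First, the variable separation you identify as the main difficulty is done by two explicit devices: the condition $nl\leq m$ is removed with Perron's formula (Lemma~\ref{perron}, with a short treatment of the diagonal $m=nl$ and of the truncation error), and the cross-coupling of $m,n$ inside the cutoff $V(\pi^2Xmn/\kappa_j^2C)$ is broken by Mellin inversion before Cauchy--Schwarz and Lemma~\ref{ls} are applied. Saying one should ``carefully track stationary phases'' does not substitute for these: without Perron and Mellin the twisted large sieve cannot even be set up. Second, the paper splits the $c$-range at $C_1\asymp NX/M$: for $C_0\leq C\leq C_1$ the weight $w$ is identically $1$ on the support of $V$ and Lemma~\ref{BKY2} applies, but for $C>C_1$ the hypothesis \eqref{bound} of Lemma~\ref{BKY2} fails and one must fall back on Huxley's weighted stationary phase lemma over a finite interval (Lemma~\ref{Huxley}), with separate treatments of the five positions of the stationary point relative to the endpoints. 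Your proposal does not register this dichotomy, and this is where several of the error shapes in the final bound (e.g.\ $r^\theta X^{3/4+\ve}M^{23/32}$) actually originate.
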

\subsection{Proof of Theorem \ref{main}}
The theorem follows from Prop. \ref{MT} and Prop. \ref{second prop}, taking into account \eqref{SW-decomp}, and \eqref{SrX} after making the  choice $H=\sqrt{X}$. 

\section{The Main term and The Proof of Proposition \ref{MT}}\label{Mainterm}
The assumptions on the function $w$ allow us to write
\begin{align*}
A_w(X, r) &= \sum_{l|r}\mathop{\sum\sum}_{\substack{(a_1,c_1) = 1\\a_1c_1\neq 0 \\ a_1>0}} \frac {1}{a_1} w(la_1)w(lc_1) \int\limits _1^X w\l(\frac{c_1x}{a_1}\r)\diff  x
+O\l(H\sum_{l|r}\mathop{\sum\sum}_{\substack{(a_1,c_1) = 1\\a_1c_1\neq 0 \\ a_1>0}} \frac {1}{a_1} w(la_1)w(lc_1)\r),
\end{align*}
where the $O$-term is easily seen to be $O\l(\frac{\sigma(|r|)}{|r|}HX^{1+\ve}\r)$; and as for the first sum, we use \eqref{w1} to decompose  $A_w(X, r)$ as 
\begin{align*}
A_w(X, r) &=  \sum_{l|r}\sum_{\frac{1}{l} \leq a_1 \leq \frac{X}{l}} \frac {1}{a_1} \sum _{\substack{a_1 \leq c_1 \leq \frac{X}{l} \\ (a_1,c_1) = 1  }}  \int\limits_1^{\frac{a_1X}{c_1}}1 \diff x
+\sum_{l|r}\sum_{\frac{1}{l} \leq a_1 \leq \frac{X}{l}} \frac {1}{a_1} \sum _{\substack{\frac{1}{l} \leq c_1 \leq a_1 \\ (a_1,c_1) = 1}}\int\limits_{\frac{a_1}{c_1}}^X 1 \diff x 
+  O\l(\frac{\sigma(|r|)}{|r|}HX^{1+\ve}\r)\\
&=A_{1,w} (X)+ A_{3,w} (X) + O\l(\frac{\sigma(|r|)}{|r|}HX^{1+\ve}\r),
\end{align*}
say,
where the contribution of the terms with $a_1>X/l$ is absorbed in the error term. 
Now, we obtain
\ba
A_{1,w} (X) & = \sum_{l|r} \sum_{\frac{1}{l} \leq a_1 \leq \frac{X}{l}} \frac {1}{a_1} \sum _{\substack{a_1 \leq c_1 \leq \frac{X}{l} \\ (a_1,c_1) = 1}} \l(\frac{a_1X}{c_1}-1\r) =\frac{\sigma(|r|)}{|r|}\frac{X^2}{\zeta(2)}+ O\l(\frac{\sigma(|r|)}{|r|}X (\log X)^2 \r), \nonumber
\ea
by executing the integral and removing the coprimality condition by M\"{o}bius inversion;  and  $A_{3,w} (X) $ satisfies the same formula.

\section{The error term: Initial Transformations}\label{B0} 
Switching the order of summation and applying Poisson summation to the sum over $c_1$ after subdivision into residue classes modulo $a_1$, we get 
\ba\label{India}
B_w(X, r)&= \sum_{l|r} \sum_{a_1>1}\frac{w(la_1)}{a_1^2} \sum_{\substack{n\in \Z\\ n\neq 0}}\sum_{m \in \Z}
 S(nr_1, -m , a_1) \widehat{G}(n/a_1, m/a_1),
 \ea
 where $G$ is defined by 
 \[
 G(x, y)=w(x)w(ly)w\l(\frac{xy}{a_1}\r).
 \]
The decay of  $\widehat{G}$ limits the effective ranges of the variables $m$ and $n$  to $ \frac{X^{1+\ve}}{H}$ and $\frac{X^{1+\ve}}{lH}$, respectively, as shown in the lemma below. First recall the definition in \eqref{F(x)}. 
\begin{lemma}
We have
\be \label{basic}
\widehat{F_{a, c}}(y)\ll_k H^{1-k} y^{-k}\l(1+\left|\frac{c}{a}\right|^{k-1}\r),
\ee
for every integer $k\geq 1$. Furthermore. we have the bound
\ba \label{hatG}
\widehat{G}(u, v) \ll \min \l\{\frac X v, \frac{X}{lu}\r\},
\ea
where $u$ or $v$ are any two non-zero real numbers.
\end{lemma}

\begin{proof}
Using \eqref{F(x)} we integrate by parts $k$ times, getting
\begin{align*}
\widehat{F_{a, c}}(y) = \frac{1}{(-2\pi iy)^k}\sum_{j=0}^k \int\limits _{-\infty}^{\infty} w^{(j)}(x){\l(\frac{c}{a}\r)}^{k-j}w^{(k-j)}\l(\frac{cx}{a}\r)e(-xy)\diff x .
\end{align*}
Note that $w^{(k)}\l(\frac{cx}{a}\r)=0$ unless $\frac{cx}{a} \in (1-H, 1)\cup (X, X+H)$, and $w^{(k)}\l(\frac{cx}{a}\r)$ vanishes outside two intervals of length $O\l(\l|\frac a c\r| H\r)$. By \eqref{w-prime}, we obtain the bound $O({\l|\frac {c}{a}\r|}^{k-1}y^{-k}H^{1-k})$ for the term corresponding to $j=0$. 
The other summands together  contribute only $O(y^{-k}H^{1-k})$ by \eqref{w0}, \eqref{w1}, \eqref{w-support} and \eqref{w-prime}.
Finally, by  applying the bound \eqref{basic} for $k =1$ in one of the integrals  and by  trivially bounding the other integral using the support of the function $w$, we obtain the bound \eqref{hatG}.
\end{proof}
\noi
We isolate the term $m=0$ for which the Kloosterman sum becomes a Ramanujan sum. Using the standard bound \(|r_q(n)|\leq (q, n)\) for Ramanujan sums
(see \cite[Eq. (3.5)]{IK}), we  easily obtain the following: 
\be\label{BW-decomp}
B_w(X, r)=   B_{1,w} (X) +O(X^{1+\ve}),
\ee
where $B_{1,w} (X)$ denotes the sum over all $m\neq 0$.

\subsection{Analysis of $B_{1, w} (X)$: initial steps}\label{B1}

At this stage, \eqref{Weil} yields $B_{1,w}(X)\ll \tau(r)X^{5/2+\ve}/H$ and one obtains the asymptotic formula (with the optimal choice $H=X^{3/4}$)
\be\label{sevenbyfour}
{S_r} (X)=  \frac{2}{\zeta(2)}\frac{\sigma(|r|)}{|r|}X^2 + O\l( X^{7/4+\ve}\r).
\ee
Our goal is to improve upon the size of the error term in terms of $X$. First we 
note that the contribution of the  terms with $la_1>X$  is quite small as the next proposition shows. Let
 \ba
\mathcal{E}(Y) &:=\sum_{l|r} \sum_{0 \neq n \ll \frac{X^{1+\ve}}{lH}} \sum_{0 \neq m \ll \frac{X^{1+\ve}}{H}} \sum_{\substack{ a_1>0 \\ la_1 \in (Y,Y+H] }}\frac{w(la_1)S(nr_1, -m, a_1)}{a_1^2} \widehat{G}(n/a_1, m/a_1). \nonumber
\ea
\begin{proposition}\label{H-interval}
If $Y\asymp X$, then
\ba \nonumber
\mathcal{E}(Y) \ll  X^{\frac{3}{2}+\ve} .
\ea
\end{proposition}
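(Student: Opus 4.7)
The plan is to prove Proposition~\ref{H-interval} by combining the Weil bound for Kloosterman sums with the size estimate \eqref{hatG} for $\widehat{G}$. The crucial observation is that the restriction $la_1 \in (Y, Y+H]$ with $Y \asymp X$ forces $a_1 \asymp X/l$ and localises $a_1$ to an interval of length only $O(H/l)$ — a factor $H/X$ shorter than the support of the full sum $B_{1,w}(X)$. Since the naive Weil bound on that full sum is $O(\tau(r)X^{5/2+\ve}/H)$, this shortening is exactly what converts it into the target $O(X^{3/2+\ve})$ for $\mathcal{E}(Y)$.

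Concretely, I would first apply the Weil bound $|S(nr_1,-m,a_1)| \ll a_1^{1/2+\ve}(a_1,nr_1,m)^{1/2}$ together with the elementary geometric-mean consequence of \eqref{hatG},
\[
\widehat{G}(n/a_1, m/a_1) \ll \min\l\{\frac{Xa_1}{m},\frac{Xa_1}{ln}\r\} \leq \frac{Xa_1}{\sqrt{lmn}}.
\]
To handle the gcd factor in the resulting $a_1$-sum, I would use the standard divisor-switching trick: setting $M = (nr_1, m)$ and noting $(a_1, nr_1, m) = (a_1, M)$, the bound
\[
\sum_{a_1 \in I}(a_1, M)^{1/2} \le \sum_{d \mid M} d^{1/2} \#\{a_1 \in I : d \mid a_1\} \ll |I|\,M^{\ve} + M^{1/2+\ve}
\]
controls it in terms of the interval length and a boundary term. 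Plugging these estimates back in with $a_1 \asymp X/l$ and $|I| = H/l$, the main piece contributes
\[
X^{\ve}\sum_{l\mid r}\frac{HX^{1/2}}{l}\sum_{n\ll X/(lH)}\frac{1}{\sqrt{n}}\sum_{m\ll X/H}\frac{1}{\sqrt{m}} \ll X^{\ve}\sum_{l\mid r}\frac{X^{3/2}}{l^{3/2}} \ll X^{3/2+\ve},
\]
while the boundary piece from $M^{1/2+\ve}$ contributes $\ll X^{\ve}X^{2}/H^{3/2}$, which under the hypothesis \eqref{H-size} is bounded by $X^{5/4+\ve}$ and therefore absorbed.

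There is no serious obstacle here. The role of Proposition~\ref{H-interval} is precisely to confirm that the ``transition layer'' of the smoothing weight, where $la_1 \in (X, X+H]$ and $w$ is merely bounded rather than identically $1$, is already harmless at the level of the target error term. Once this is in hand, the subsequent delicate stationary phase and Kuznetsov analysis in the proof of Proposition~\ref{b1} may be launched on the bulk region $la_1 \leq X$ where $w(la_1) \equiv 1$ and the sharp-cutoff features of the original counting problem reappear in a form amenable to harmonic analysis.
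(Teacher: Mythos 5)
Your proposal is correct and follows the same route the paper intends: the paper's one-line proof points to \eqref{hatG} and the Weil bound for Kloosterman sums, and you have carried out exactly this computation, exploiting the localisation of $a_1$ to an interval of length $H/l$ (so $a_1\asymp X/l$) together with the geometric-mean bound $\widehat{G}(n/a_1,m/a_1)\ll Xa_1/\sqrt{lmn}$ to reach $X^{3/2+\ve}$. The only cosmetic remark is that $m,n$ range over nonzero integers of either sign, so the gcd and geometric-mean estimates should carry $|m|,|n|$, but this does not affect the bound.
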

\begin{proof}
This follows easily from \eqref{hatG} and \eqref{Weil}.
\end{proof}
\begin{remark}\label{largea1}
Thus, wherever we have a sum over $a_1$, we can disregard the part $\sum_{la_1>X}$ by adding the term $O\l(  X^{\frac{3}{2}+\ve} \r)$. We do not display it every time since in the final result we have this error term.
\end{remark}
We now state a result that will be applied several times. It follows easily from \eqref{Weil}.
\begin{proposition}\label{10.1}
Let $l \geq 1$, and $\alpha, \beta, \gamma \in \R$, such that $ \alpha \leq \frac{3}{2}$ and $\beta, \gamma \leq 1$. For $0 < U \leq X $ we have
\ba 
\Delta(\alpha,\beta,\gamma,l,U)&:=  \sum_{\substack{ 0 \neq n\ll \frac{X^{1+\ve}}{lH} }}\frac{1}{n^{\gamma}}\sum_{\substack{ 0\neq  m\ll \frac{X^{1+\ve}}{H}}}\frac{1}{m^{\beta}}
\sum_{\substack{1 \leq la_1 \leq U }}\frac{w(la_1)\l|S(nr_1, -m, a_1)\r|}{a_1^{\alpha}}
 \ll  U^{-\alpha+\frac{3}{2}+\ve} \l(\frac{X}{H}\r)^{-\gamma -\beta +2}  .\nonumber
\ea
\end{proposition}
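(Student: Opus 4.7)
\textbf{Proof proposal for Proposition \ref{10.1}.} The plan is a direct application of the Weil bound for Kloosterman sums followed by elementary estimation of three nested power-type sums. The hypotheses $\alpha \leq 3/2$ and $\beta, \gamma \leq 1$ are precisely what keeps each of these sums at (or below) the boundary of convergence, so no cancellation beyond the square-root savings in the Weil bound is needed.

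I would begin with the Weil bound
\[
|S(nr_1, -m, a_1)| \leq \tau(a_1)(nr_1, m, a_1)^{1/2} a_1^{1/2},
\]
and decompose the innermost $a_1$-sum by the value of the triple gcd. Writing $d = (nr_1, m, a_1)$ (so $d \mid m$) and $a_1 = d a_1'$, the inner sum becomes
\[
\sum_{1\leq la_1 \leq U}\frac{w(la_1)|S(nr_1,-m,a_1)|}{a_1^{\alpha}} \ll \sum_{d \mid m} d^{1/2+\ve}\sum_{a_1' \leq U/(ld)} (d a_1')^{1/2 - \alpha+\ve}.
\]
The condition $\alpha \leq 3/2$ gives $1/2-\alpha \geq -1$, so the $a_1'$-sum is $\ll (U/(ld))^{3/2-\alpha+\ve}$ (with the borderline case $\alpha=3/2$ producing a logarithm absorbed into $U^{\ve}$). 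The remaining divisor sum over $d \mid m$ contributes only a $\tau(m) \ll X^{\ve}$ factor after collecting powers of $d$, so the full inner block is bounded by $(U/l)^{3/2-\alpha+\ve} \leq U^{3/2-\alpha+\ve}$, the last inequality using $l \geq 1$ and $3/2-\alpha \geq 0$.

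The outer $m$- and $n$-sums are then routine: the hypotheses $\beta, \gamma \leq 1$ yield
\[
\sum_{1 \leq m \ll X^{1+\ve}/H} m^{-\beta} \ll (X/H)^{1-\beta+\ve}, \qquad \sum_{1 \leq n \ll X^{1+\ve}/(lH)} n^{-\gamma} \ll (X/H)^{1-\gamma+\ve}
\]
(the latter after using $l \geq 1$ to replace $X/(lH)$ by $X/H$), and the contributions from $m<0$ and $n<0$ are identical by symmetry. Multiplying the three estimates gives the claimed $U^{3/2-\alpha+\ve}(X/H)^{2-\gamma-\beta+\ve}$. There is no real obstacle here; the only vigilance required is at the three borderline exponents $\alpha = 3/2$, $\beta = 1$, $\gamma = 1$, each of which produces a harmless logarithmic factor that disappears into the $X^{\ve}$.
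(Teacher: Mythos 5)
Your proposal is correct and is precisely the argument the paper has in mind: the authors explicitly omit the proof, remarking only that it ``follows easily from the Weil bound,'' and your application of the Weil bound $|S(nr_1,-m,a_1)|\leq \tau(a_1)(nr_1,m,a_1)^{1/2}a_1^{1/2}$ followed by the gcd decomposition $d\mid m$, $a_1=da_1'$, and the three elementary power-sum estimates (with the hypotheses $\alpha\leq 3/2$, $\beta,\gamma\leq 1$ keeping each at the convergence threshold) supplies exactly the omitted details.
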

\subsection{Decomposition of $\widehat{G}(n/a_1, m/a_1)$}
We have by \eqref{w1},
\ba
&\widehat{G}(n/a_1, m/a_1)  =\l( \int\limits_1 ^X   1+ \int\limits_ {1-H}^1 +\int\limits_ X ^{X+H} w(x) \r) \int w(ly)w\l(\frac{yx}{a_1}\r)  e\left(\frac{-my}{a_1}\right) e\l(\frac{-nx}{a_1}\r) \,\diff y\, \diff x \nonumber
\ea
By integration by parts in the $y$-integral followed by a trivial estimation of the $x$-integral the last two terms are $O(a_1 H /m)$.
Now we decompose the first integral  using   \eqref{w0} and \eqref{w1} as: 
\ba \label{hatGintegral}
&\widehat{G}(n/a_1, m/a_1) =\int\limits_{1} ^{la_1} \int\limits_{\frac{a_1}{x}}^{\frac{X}{l}} e\l(\frac{-nx-my}{a_1}\r)\diff y \diff x 
+ \int\limits_ {1} ^{la_1} \l[\int\limits_ {\frac{X}{l}} ^{\frac{X+H}{l}}+\int\limits_ {\frac{a_1}{x}(1-H)} ^{\frac{a_1}{x}}\r] w(ly)w\l(\frac{xy}{a_1}\r) e\l(\frac{-nx-my}{a_1}\r) \diff y \diff x \\
&+\int\limits_ {la_1} ^{X} \int\limits^{\frac{a_1X}{x}}_{\frac{1}{l}} e\l(\frac{-nx-my}{a_1}\r) \diff y \diff x 
+ \int\limits_ {la_1} ^{X}\l[ \int\limits_{\frac{a_1X}{x}} ^{\frac{a_1}{x}(X+H)} +\int\limits_ {\frac{1}{l}(1-H)} ^{\frac{1}{l}} \r]w(ly)w\l(\frac{xy}{a_1}\r) e\l(\frac{-nx-my}{a_1}\r) \diff y \diff x +O\l(\frac{a_1H}{m}\r)\\
&=I_1+I_2+I_3+I_4+I_5+I_6+O(a_1H/m),
\ea
say. Accordingly, we decompose $B_{1, w}(X)$ as 
\[
B_{1, w}(X)=\sum_{j=1}^6 B_{1, w}(X, I_j)+O\l(X^{3/2+\ve}\r),
\]
where we use Prop. \ref{10.1} to estimate the contribution of the error term. 
From $I_4$ and $I_5$ we  extract stationary phases for further analysis. The other terms can be handled by the Weil bound and they contribute only to the error term.

\section{Contributions of $I_1, I_2, I_3,$ and $I_6$}\label{B2}
\begin{proposition}\label{I2,I3,I6}
We have,
\ba \nonumber
B_{1, w}(X, I_1), B_{1, w}(X, I_2), B_{1, w}(X, I_3), B_{1, w}(X, I_6)\ll  \l(X^{\frac{5}{4}}(X/H)^{1/2}+ (X/H)^{3}+ X^{\frac{3}{2}}+X^{\frac{5}{2}}/H^2\r)X^{\ve}
\ea
\end{proposition}
\begin{proof} 
It is enough to bound the integrals as the outside sums are estimated after an application of \eqref{Weil}. We first consider $I_2$; and $I_6$ can be handled in a similar manner.  
Interchanging the order of the integrals appearing in $I_2$ and using integration by parts in the $x$-integral, we get $I_2   \ll a_1H/nl $ by the properties the function $w$. 
Now we consider $I_3$. From \eqref{hatGintegral}, we have
\ba \nonumber
I_3 &= \int\limits_ {1} ^{la_1} e\l(\frac{-nx}{a_1}\r) \int\limits_ {\frac{a_1}{x}(1-H)} ^{\frac{a_1}{x}} w(ly)w\l(\frac{xy}{a_1}\r) e\l(\frac{-my}{a_1}\r)\diff y \diff x 
= \int\limits_ {1} ^{la_1}  \l[\int\limits_ {\frac{a_1}{x}(1-H)} ^{0} + \int\limits_ {0}^{1} +
\int\limits_{1} ^{\frac{a_1}{x}} \r]. 
\ea
In the resulting three double-integrals, the last two can be bounded trivially and they contribute $O(a_1 \log X)$. 
In the first one, let us denote it by ${I_3}^{\ast}$, we interchange the $x$ and the $y$ integrals and the change of variable $y\rightarrow -y$ yields
\ba \nonumber
{I_3}^{\ast}&:=\int\limits_ { 0}^{a_1(H-1)} w(-ly) e\l(\frac{my}{a_1}\r) \int\limits_ {1} ^{\min\{la_1, a_1 (H-1)/y)\}} w\l(\frac{-xy}{a_1}\r) e\l(\frac{- nx}{a_1}\r)\diff x \diff y \\
&=\int\limits_ { 0}^{(H-1)/l} \int\limits_ {1} ^{la_1} +\int\limits_ { (H-1)/l}^{a_1(H-1)} \int\limits_ {1} ^{a_1 (H-1)/y} 
={I_{3, 1}}^{\ast}+{I_{3, 2}}^{\ast},
\ea
say. 
By integration by parts in the $x$-variable followed by trivial estimation of the $y$-integral, we have ${I_{3, 1}}^{\ast}\ll \frac{a_1H}{nl}$. For ${I_{3, 2}}^{\ast}$, we note that from the support of the function $w$ (see \eqref{w0}), $w(-ly)=0$ if $y>(H-1)/l$. Hence, ${I_{3, 2}}^{\ast} = 0$.
Combining the above bounds, 
$
I_3 \ll a_1 \l(\frac{H}{nl} +  \log X \r).
$ 
Next we consider $ I_1$. We execute the integration over  $y$ first. Thus,
\ba \label{I_1} 
I_1 = \frac{1}{2\pi i}\frac{a_1}{m}  I'_{m,n,a_1} +  O\l(\frac{a_1H}{m}+ \frac{a_1^2}{mn}\r), \text{ where } I'_{m,n,a_1} = \int\limits_ 1 ^ {la_1} e(h_1(x)) \diff x, \text{ and }
h_1(x)= -\frac{nx}{a_1}-\frac{m}{x}. \nonumber
\ea
The contribution of the $O$-terms is $O(X^{3/2+\ve})$ by Prop. \ref{10.1}. 
We  now analyze the exponential integral. If $mn<0$,
 there is no stationary point. Hence, by \eqref{k-der}, 
$
I'_{m,n,a_1}  \ll a_1/n.
$
If $mn>0$,  there is a stationary point given by \(x_0 = \sqrt{\frac{a_1m}{n}}\). Let us assume further $m, n >0$ since the analysis in the other case is similar and we get the same bound.
We consider two cases. \\
\textbf{Case I}: $x_0\in [2, 2la_1/3]$.
 Here we split the interval $[1, la_1]$ in three parts as follows:
\be \nonumber
I'_{m,n,a_1} = \l(\int_1 ^{x_0 - \delta x_0} + \int_{x_0 - \delta x_0} ^{x_0 + \delta x_0} +  \int_{x_0 + \delta x_0} ^{la_1} \r) e (h_1(x))\diff x=J_1+J_2+J_3, 
\ee
say, where $\delta=1/2$. 
By \eqref{k-der} with $k=1, 2$ respectively, we get \(J_1, J_3 \ll a_1/n\) and
\(J_2 \ll 
a_1^{3/4}m^{1/4}/ n^{3/4}\). 
\textbf{Case II}: $x_0\notin [2, 2la_1/3]$.
In this case, the required bound follows after applying \eqref{k-der} with $k=2$ to $I'_{m,n,a_1}$ and \eqref{Weil} once we observe that the conditions $x_0<2$ and $x_0>2la_1/3$ correspond to the conditions
$a_1 \leq 4n/ m$ and  $a_1 \leq 9m/4nl^2$, respectively. 
\end{proof}
\section{Contributions of $I_4$ and $I_5$} \label{conI4}
\subsection{Contributions of $I_4$}\label{I5one} 
Here we shall assume that $a_1$ satisfies $la_1\leq X-1$ and this is justified by Remark \ref{largea1}. Analogously to \eqref{I_1},  here we have the expression
\ba \nonumber
I_4 = \frac{1}{2\pi i}\frac{a_1}{m} I_{m,n,a_1}   + O\l(\frac{a_1H}{m} + \frac{a_1^2}{mn}\r), \textnormal{ where } I_{m,n,a_1} :=\int _{la_1}^X e(h(x))\, \diff x, \textnormal{ and } h(x):= -\frac{nx}{a_1}-\frac{mX}{x} . \nonumber
\ea
By Prop. \ref{10.1},  the contribution of the $O$-terms is absorbed into the error term. Differentiating $h(x)$, 
\be  \label{secondderi}
h'(x)= -\frac{n}{a_1}+\frac{mX}{x^2},\quad \text{and} \quad h''(x) = - \frac{2mX}{x^3}. 
\ee
For $mn < 0$, there is no stationary point and $I_{m,n,a_1}  \ll a_1/n$ by \eqref{k-der} and the contribution of this is sufficiently small by Prop. \ref{10.1}. The case left to consider is  $mn>0$ and we shall assume that both $m$ and $n$ are positive since the complementary case will be similar. In this case the stationary point is given by 
\be \label{stationarypoint1}
x_0 = x_0(m,n,a_1) := \sqrt{\frac{ma_1X}{n}} \ll X^{5/4} 
\ee 
Now, if we give an upper bound for exponential integrals as in the case of $I_1$ then the error term is not strong enough. We, therefore, refine our analysis by localizing the stationary point $x_0$ and extracting  stationary phases and estimating the ensuing error terms carefully. This leads to the following proposition, where we use the notation in \eqref{uniformphase}. 
\begin{proposition} \label{G}
We have, 
\ba \nonumber
B_{1, w}(X, I_4) &= \frac{X^{\frac{1}{4}}}{2\sqrt{2}\pi i}\mathop{\sum_l\sum_m\sum_n\sum_{a_1 >0}}_{ \sqrt{\frac{ma_1X}{n}} \in [la_1, X]}\frac{w(la_1)S(nr_1,-m,a_1)}{a_1} 
\l(\frac{a_1}{mn}\r)^{\frac{3}{4}} \l(e\l(\mathcal{H}(X)\r) -e\l(-\mathcal{H}(X)\r)  \r)\\
&+ O\l(X^{\ve}\l(X^{\frac{3}{2}} + \frac{X^2}{H}\r)\r).
\ea
The proof consists of several steps carried out below.
\end{proposition}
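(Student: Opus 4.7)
The plan is to insert the expression
\[
I_4 = \frac{a_1}{2\pi i m}\, I_{m,n,a_1}+O\!\l(\frac{a_1H}{m}\r)+O\!\l(\frac{a_1^2}{mn}\r)
\]
from the start of \S\ref{conI4} into the definition of $B_{1,w}(X,I_4)$. After summation against $w(la_1)/a_1^2$ and the Kloosterman sum $S(nr_1,-m,a_1)$, both $O$-contributions are $\ll X^{3/2+\ve}$ by Proposition \ref{10.1} (with parameter choices $(\alpha,\beta,\gamma)=(1,1,0)$ and $(0,1,1)$ respectively). The remaining task is to analyse
\[
I_{m,n,a_1} = \int_{la_1}^{X} e(h(x))\,\diff x,\qquad h(x)=-\frac{nx}{a_1}-\frac{mX}{x}.
\]
When $mn<0$, $h'$ does not vanish and $|h'(x)|\geq n/a_1$, so Lemma \ref{Stein} with $k=1$ yields $I_{m,n,a_1}\ll a_1/n$, and a further invocation of Proposition \ref{10.1} shows its contribution is $O(X^{3/2+\ve})$.

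The principal case is $mn>0$. Assuming first that $m,n>0$, the stationary point is $x_0=\sqrt{ma_1X/n}$, and the condition $x_0\in[la_1,X]$ is equivalent to $a_1\leq\min\{mX/(nl^2),\,nX/m\}$. When $x_0$ lies outside $[la_1,X]$ the integral is free of stationary points; Lemma \ref{Stein} together with Proposition \ref{10.1} absorbs it into the stated error. When $x_0\in[la_1,X]$ I apply Lemma \ref{stationary}; since
\[
h(x_0)=-2\sqrt{\tfrac{mnX}{a_1}},\qquad h''(x_0)=-\frac{2n^{3/2}}{m^{1/2}a_1^{3/2}X^{1/2}}<0,
\]
the leading term is
\[
\frac{e(h(x_0)-1/8)}{\sqrt{|h''(x_0)|}}=\frac{m^{1/4}a_1^{3/4}X^{1/4}}{\sqrt{2}\,n^{3/4}}\,e\!\l(-2\sqrt{\tfrac{mnX}{a_1}}-\tfrac{1}{8}\r),
\]
which, multiplied by the ambient prefactor $\frac{w(la_1)\,S(nr_1,-m,a_1)}{2\pi i\,a_1 m}$, reproduces exactly the first summand of the proposition. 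In the parallel regime $m,n<0$, the flips of $h''$ (now positive), of $h(x_0)=+2\sqrt{mnX/a_1}$, and of $1/m$ combine to produce the negative of the second summand; the Kloosterman sum is unchanged since $S(-a,-b,c)=S(a,b,c)$.

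The main obstacle is controlling the error terms $R_1, R_2$ of Lemma \ref{stationary} uniformly in $(l,m,n,a_1)$. Because $|h''(x)|=2mX/x^3$ varies monotonically across $[la_1,X]$ and $x_0$ may drift close to either endpoint, a naive global application is too coarse and in particular $R_1$ can become comparable to the main term itself. My plan is to decompose the summations in $a_1$, $m$, and $n$ into dyadic blocks and, within each block, further partition the range of integration so that $x_0$ is confined to a short sub-interval on which $|h''|\asymp|h''(x_0)|$ and $x_0$ stays bounded away from the endpoints; the complementary pieces are handled directly by Lemma \ref{Stein}. The resulting errors take the shape $a_1^{\alpha}m^{-\beta}n^{-\gamma}$ with exponents falling within the scope of Proposition \ref{10.1}, and summation over $l$ together with the dyadic blocks assembles into the claimed bound $O(X^{\ve}(X^{3/2}+X^2/H))$.
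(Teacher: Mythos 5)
Your proposal follows the paper's own argument in \S\ref{conI4}--\S\ref{station}: the $O$-terms and the $mn<0$ case are dispatched via Prop.~\ref{10.1} and the first-derivative bound in Lemma~\ref{Stein}; for $mn>0$ the stationary phase is extracted via Lemma~\ref{stationary}; and the error terms $R_1,R_2$ are controlled by dyadically localizing $x_0=\sqrt{ma_1X/n}$ and simultaneously subdividing the range of integration, exactly as in \eqref{I_j}, \eqref{F_j} and \eqref{S-decomp}. Your main-term algebra and the sign analysis for $m,n<0$ are both correct. The one point your error-control sketch leaves implicit is that $x_0$ is fixed by $(l,m,n,a_1)$ and will, for some tuples, fall within $O(1)$ of whichever dyadic boundary you choose, so ``$x_0$ stays bounded away from the endpoints'' cannot be arranged uniformly; the paper isolates these transition cases as the separate sum $\mathcal{T}$ (Prop.~\ref{SU}), foregoes stationary phase there, and uses the bare second-derivative bound (Lemma~\ref{Stein} with $k=2$), recovering the loss from the fact that trapping $x_0$ in a window of fixed length $2\delta$ pins $a_1$ to an interval of length $O(ny_jY/(mX)+1)$ as in \eqref{n/m}. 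That separate treatment is the ingredient your plan needs to spell out.
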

\subsection{Localizing the stationary point and decomposition of $B_{1,w}(X,I_4)$} \label{I5two}
For a fixed $a_1$, define
\ba \label{kdef}
k=k(a_1) &:= \max \l\{j\geq 0: \frac{X}{2^j}\geq la_1+1\r\} =\l[\frac{\log (X/(la_1 +1))}{\log 2}\r].
\ea 
By \eqref{kdef}, we must have:  $0 \leq j \leq k \leq J(X) :=\l[\frac{\log (X/2)}{\log 2}\r]$ and $X/2^j \geq la_1 $. We make a dyadic subdivision of the range of $x_0$, i.e., the interval $[1, X^{5/4}]$ in the following manner:
\ba\label{I_j}
& \mathcal{I} _{0} := \l[X+\delta , X^{\frac{5}{4}}\r],
 \mathcal{I} _j := \l[\frac{X}{2^{j}}+\delta, \frac{X}{2^{j-1}}-\delta\r] \text{ for } 1\leq j\leq k, \quad
\mathcal{I}_{k+1}: = [la_1 + \delta, \frac{X}{2^k}-\delta], \quad 
 \mathcal{I}_{k+2}: = [1, la_1 - \delta ].
\ea
For $\delta>0$ is to be chosen later ($\delta=1/100$  suffices), let us define
\be \label{I_j'}
\mathcal{I} '_j := \l(\frac{X}{2^{j-1}}-\delta, \frac{X}{2^{j-1}}+\delta\r) \text{ for } 1 \leq j \leq k+1, 
\mathcal{I} '_{k+2} := (la_1-\delta, la_1+\delta).
\ee 
We write the interval $[1 , X ^{5/4}] $ as a disjoint union of the $\mathcal{I}_j$'s, $\mathcal{I}'_j$s, and decompose the sum as follows: 
\ba \label{B-decomp}
B_{1, w}(X, I_4)&= \frac{1}{2\pi i} \sum_{l|r} \sum_{a_1 >0}\sum_{0 \leq j \leq k+2}\mathop{\sum_{1 \leq m \ll \frac{X^{1+\ve}}{H}}\sum_{1 \leq n \ll \frac{X^{1+\ve}}{lH}}}\frac{1}{m}\frac{w(la_1)S(nr_1,-m,a_1)}{a_1}I_{m,n,a_1}= \mathcal{S}+ \mathcal{T},
\ea
where in the sum $\mathcal{S}$, the stationary point $x_0 \in \cup_j \mathcal{I}_j$ and in the sum $\mathcal{T}$, $x_0 \in \cup_j \mathcal{I}'_j$. Let
\ba \label{F_j}
  \mathcal{F}_j := \l[\frac{X}{2^{j}}, \frac{X}{2^{j-1}}\r] \text{ for } 1\leq j \leq k, \quad  \mathcal{F}_{k+1} := \l[la_1,  \frac{X}{2^k}\r],
\ea
so that \([la_1,X] = \bigcup\limits_{1 \leq j \leq k+1} \mathcal{F}_j\),
where an interval $\mathcal{F}_j$ can intersect its neighbour only at the boundary point. 
Let us define
\be \label{ydef}
y_j := \frac{X}{2^{j}},\text{ for } 0\leq j\leq k+1, \text{ and } y_{k+2} := \frac{la_1}{2} .
\ee
From \eqref{kdef} and \eqref{ydef}, we have, $y _{k+1} \leq la_1\leq y_k $. Note that for $1\leq j\leq k+1$, the interval $\mathcal{I}_j$ is contained in $\mathcal{F}_j$ and the edges 
of  $\mathcal{I}_j$ are away from the edges of $\mathcal{F}_j$. For isolating the stationary phase for $x_0 \in\mathcal{I}_j$ we express the sum $\mathcal{S}$ as 
\ba \label{S-decomp}
\mathcal{S} &= \frac{1}{2\pi i} \sum_{l|r} \mathop{ \sum_{a_1 >0 } \sum_{1 \leq m \ll \frac{X^{1+\ve}}{H}}\sum_{1 \leq n \ll \frac{X^{1+\ve}}{lH}}}_{x_0 \in \mathcal{I} _{0}}\frac{1}{m}\frac{S(nr_1,-m,a_1)w(la_1)}{a_1} \int_{la_1}^{y_0}  e(h(x))\diff x\\
&+ \frac{1}{2\pi i} \sum_{l|r} \mathop{\sum_{a_1 >0 } \sum_{1 \leq m \ll \frac{X^{1+\ve}}{H}}\sum_{1 \leq n \ll \frac{X^{1+\ve}}{lH}}}_{x_0 \in \mathcal{I} _1}\frac{1}{m}\frac{S(nr_1,-m,a_1)w(la_1)}{a_1}\l[\int_{la_1}^{y_1} + \int_{\mathcal{F}_1} \r]e(h(x))\diff x \\
&+\frac{1}{2\pi i}\sum_{l|r} \sum_{a_1 >0}\sum_{1 < j \leq k}\mathop{\sum_{1 \leq m \ll \frac{X^{1+\ve}}{H}}\sum_{1 \leq n \ll \frac{X^{1+\ve}}{lH}}}_{x_0 \in \mathcal{I} _j}\frac{1}{m}\frac{S(nr_1,-m,a_1)w(la_1)}{a_1}\l[\int_{la_1}^{y_j}  + \int _{\mathcal{F}_j}  + \int_{2y_j}^X \r]e(h(x))\diff x\\
&+ \frac{1}{2\pi i} \sum_{l|r} \mathop{ \sum_{a_1 >0} \sum_{1 \leq m \ll \frac{X^{1+\ve}}{H}}\sum_{1 \leq n \ll \frac{X^{1+\ve}}{lH}}}_{x_0 \in \mathcal{I} _{k+1}}\frac{1}{m}\frac{S(nr_1,-m,a_1)w(la_1)}{a_1}\l[\int_{\mathcal{F}_{k+1}} + \int_{2y_{k+1}}^X \r]e(h(x))\diff x\\
&+ \frac{1}{2\pi i} \sum_{l|r} \mathop{ \sum_{a_1 >0 }  \sum_{1 \leq m \ll \frac{X^{1+\ve}}{H}}\sum_{1 \leq n \ll \frac{X^{1+\ve}}{lH}}}_{x_0 \in \mathcal{I} _{k+2}}\frac{1}{m}\frac{S(nr_1,-m,a_1)w(la_1)}{a_1} \int_{2 y_{k+2}}^{X}  e(h(x))\diff x\\
&= \mathcal{S}_1+ (\mathcal{S}_2+ \mathcal{S}_3)+(\mathcal{S}_4+\mathcal{S}_5+\mathcal{S}_6)+(\mathcal{S}_7+ \mathcal{S}_8) + \mathcal{S}_9,
\ea
say. 
\begin{remark}
Note that $la_1>X/2$ iff $k=0$ and in that case, the only sums that occur on the right hand side of the above equation are $\mathcal{S}_1$, $\mathcal{S}_8$ and $\mathcal{S}_9$.
\end{remark}
\subsection{Estimation of $\mathcal{S}_1$, $\mathcal{S}_2$, $\mathcal{S}_4$, $\mathcal{S}_6$, $\mathcal{S}_8$ and $\mathcal{S}_9 $} \label{estiT}
 Let $Q_1(X)  := \mathcal{S}_1 + \mathcal{S}_2 + \mathcal{S}_4$ and 
$ Q_2(X): = \mathcal{S}_6+\mathcal{S}_8+\mathcal{S}_9 $. 
\begin{proposition} \label{S_1U}
We have,
\ba  \nonumber
Q_1(X), Q_2 (X)\ll  X^{\frac{3}{2}+\ve}.
\ea
\end{proposition}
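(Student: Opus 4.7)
\noindent
\emph{Plan.} The strategy is to bound each of the six exponential integrals $I_{m,n,a_1}$ that make up $\mathcal{S}_1,\mathcal{S}_2,\mathcal{S}_4,\mathcal{S}_6,\mathcal{S}_8,\mathcal{S}_9$ by a suitable derivative test, and then to carry out the remaining sums over $l\mid r$, $a_1$, $m$, $n$ via the Weil-bound aggregate estimate in Proposition \ref{10.1}.

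\medskip

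\noindent
The critical structural observation is that in every one of these six sums, the stationary point $x_0=\sqrt{ma_1X/n}$ of
\[
h(x)=-\frac{nx}{a_1}-\frac{mX}{x}
\]
lies strictly outside the interval of integration, and is separated from it by at least $\delta$. Since $h''(x)=-2mX/x^{3}<0$ on $(0,\infty)$, the derivative $h'$ is strictly decreasing, so the monotonicity hypothesis of Lemma \ref{Stein} with $k=1$ is automatic and $|h'|$ attains its minimum on the integration interval at the endpoint nearest to $x_0$. The factorization
\[
h'(x)=\frac{n}{a_1}\cdot\frac{(x_0-x)(x_0+x)}{x^{2}}
\]
then gives explicit lower bounds for $|h'|_{\min}$ in terms of the distance from that endpoint to $x_0$, which in turn is controlled by the constraint $x_0\in\mathcal{I}_j$.

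\medskip

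\noindent
For $\mathcal{S}_2,\mathcal{S}_4,\mathcal{S}_6,\mathcal{S}_8$ the separation between $x_0$ and the interval of integration is comparable to a dyadic quantity $X/2^{j}$, which is far larger than $\delta$. This yields a strong lower bound such as $|h'|\gtrsim n/a_1$ or $|h'|\gtrsim mX/x^{2}$. Inserting $|I_{m,n,a_1}|\ll 1/|h'|_{\min}$ into the triple sum and applying Proposition \ref{10.1} with a suitable choice of exponents (e.g.\ $(\alpha,\beta,\gamma)=(1,1,0)$) produces $O(X^{3/2+\ve})$ for each individual $j$, and the geometric series over $j$ costs only a factor $\log X$.

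\medskip

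\noindent
The delicate cases are $\mathcal{S}_1$ and $\mathcal{S}_9$, where the separation of $x_0$ from the integration interval is only $\delta$. A crude first-derivative estimate here would give a bound of size $X^{2+\ve}/H$ or worse, which is too weak. The remedy is to split each of $\mathcal{S}_1,\mathcal{S}_9$ dyadically according to the ratio $x_0/X$ (respectively $x_0/la_1$), and to exploit the fact that $x_0\in\mathcal{I}_0$ forces $a_1 m\gtrsim nX$ while $x_0\in\mathcal{I}_{k+2}$ forces $ma_1/n\lesssim la_1$. These window constraints confine the summation to a thin strip in $(a_1,m,n)$-space, and combining the strip constraint with the first-derivative bound (and the second-derivative refinement $|I|\ll X/\sqrt{m}$ in the very-near regime $x_0\in[X+\delta,2X]$) yields the required bound $O(X^{3/2+\ve})$ after Proposition \ref{10.1}. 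The main obstacle is precisely this interplay in the near regime: ensuring that the thin-window constraint from $x_0\in\mathcal{I}_0$ (or $\mathcal{I}_{k+2}$) combines correctly with the derivative estimates to absorb the factor $1/\delta$. The choice $\delta=1/100$ is comfortably sufficient because all implied constants here are absolute.
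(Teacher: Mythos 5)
Your proposal misidentifies which of the six pieces are delicate, and the bound you rely on for four of them is simply not available. You claim that for $\mathcal{S}_2,\mathcal{S}_4,\mathcal{S}_6,\mathcal{S}_8$ the stationary point $x_0$ is separated from the integration interval by a distance $\asymp X/2^j$. This is false: in $\mathcal{S}_2$ and $\mathcal{S}_4$ the integral runs over $[la_1,y_j]$ while $x_0\in\mathcal{I}_j\subset(y_j+\delta,2y_j-\delta)$, so the gap to the nearest endpoint $y_j$ is only $\geq\delta$ and can be exactly $\delta$ (likewise for $\mathcal{S}_6,\mathcal{S}_8$ at the endpoint $2y_j$). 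Since $h'(y_j)=\frac{n}{a_1}\frac{(x_0-y_j)(x_0+y_j)}{y_j^2}$, the best uniform first-derivative estimate is then only $|I^+_{m,n,a_1}(y_j)|\ll a_1 y_j/(n\delta)$, not $a_1/n$; feeding this into Proposition~\ref{10.1} yields $O(X^{5/2+\ve})$, far off the target. The near regime of $\mathcal{S}_1$ that you do single out also does not close as you suggest: for $x_0\in[X+\delta,2X]$ the constraint $a_1\in[n(X+\delta)^2/(mX),4nX/m]$ is \emph{not} a thin strip (its length is $\asymp nX/m$, comparable to $a_1$ itself), and using the second-derivative bound $|I^+|\ll X/\sqrt m$ together with the Weil bound, $a_1\leq X/l$ and $m\ll X/H$, the resulting triple sum is of order $X^{3/2}(X/H)^{1/2}$, which is $X^{7/4}$ for the paper's eventual choice $H=\sqrt X$.

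The paper resolves both difficulties with a single device absent from your plan: it treats $\mathcal{S}_1,\mathcal{S}_2,\mathcal{S}_4$ uniformly and introduces a \emph{second} dyadic parameter $Y:=x_0-y_j$ running over $[\delta,X^{3/2}]$, rather than splitting by the ratio $x_0/X$. When $\delta\leq Y\leq y_j/2$, the window $x_0\in[y_j+Y,y_j+2Y]$ confines $a_1$ to an interval of length only $O(ny_jY/(mX)+1)$, while the first-derivative bound gives $|I^+|\ll a_1 y_j/(nY)$; multiplying, the $Y$-dependence cancels, and using $la_1\leq y_j$ to force $n\ll mX/(ly_j)$ the triple sum closes at $O(X^{3/2+\ve})$. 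When $Y>y_j/2$ the separation is genuinely $\gtrsim y_j$, the crude estimate $|I^+|\ll a_1/n$ holds, and Proposition~\ref{10.1} finishes directly. Without this fine dyadic localization of the distance $x_0-y_j$ — so that the window for $a_1$ is shrunk in proportion to $Y$ rather than fixed at scale $nX/m$ — the argument loses roughly a factor $X^{1/4}$, and the second-derivative bound you invoke cannot recover it.
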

\begin{proof}
We only consider the case of $Q_1(X)$ as $Q_2 (X)$ can be tackled in the same way. 
We have,
\ba \label{S1u}
Q_1(X)=\frac{1}{2\pi i}\sum_{l|r}\ts_{ x_0 \in \mathcal{I}_j} 
 \frac{1}{m}\frac{w(la_1)S(nr_1, -m, a_1)}{a_1} I^+_{m,n,a_1}(y_j),
\ea 
where $I^+_{m,n,a_1}(y_j) := \int\limits_{la_1}^{y_j} e(h(x))\diff x .$
Here the main idea is to subdivide the ranges of $m$, $n$ and $a_1$ according to the position of $x_0$. 
Interchanging the order of summations and by \eqref{ydef}, we have
\ba \label{S1x}
Q_1(X) \ll \sum_{ l|r} \sum_{ 1 \leq j \leq J(X)} \sum_{1 \leq m \ll \frac{X^{1+\ve}}{H}} \sum_{1 \leq n \ll \frac{X^{1+\ve}}{lH}}\sum_{ \substack{ 0 < la_1 \leq y_j \\  \sqrt{\frac{ma_1X}{n}}> y_j +\delta }} \frac{1}{m}\frac{|S(nr_1,-m,a_1)|w(la_1)}{a_1}|I_{m,n,a_1}^{+}(y_j)| .
\ea
The sum over $a_1$ is vacuous if $j$ is so large that $X/2^j <la_1$. We write $
[y_j+\delta, X^{3/2}]= \bigcup_{\substack{Y=2^j\delta\\ 0 \leq j \leq  L(X)}}[y_j+Y, y_j+2Y]$, where $L(X)=O(\log X)$.  We consider two cases. \\
\textbf{Case I:} $\delta \leq Y \leq \frac{y_j}{2}$.
Suppose $y_j + Y \leq x_0 \leq y_j+ 2Y,$ where $\delta \leq Y \leq \frac{y_j}{2}$. This implies
\be \label{n/m}
\frac{n}{m} \leq \frac{a_1X}{(y_j+Y)^2} \ll \frac{1}{l}\l(\frac{X}{y_j}\r)^2,\quad \textnormal{ and }\quad
\frac{n}{mX}(y_j + Y)^2 \leq a_1 \leq  \frac{n}{mX}(y_j +2 Y)^2,
\ee 
and $a_1 \asymp \frac{n{y_j}^2}{mX}$
and the number of such $a_1$ is $O\l(\frac{ny_jY}{mX}+1\r)$. Moreover, from \eqref{n/m}
\ba \label{caseII}
 \frac{n}{a_1}\l(\frac{(y_j+Y)^2}{y_j^2} -1\r) \leq \frac{mX}{y_j^2} - \frac{n}{a_1}\leq \frac{n}{a_1}\l(\frac{(y_j+2Y)^2}{y_j^2} -1\r).
\ea
Hence, by \eqref{k-der} with $k=1$, \eqref{secondderi}, and the first inequality of the last equation, it follows that 
\be \label{lambda1I}
I^+_{m,n,a_1}(y_j) \ll \frac{a_1y_j}{nY}.
\ee
Note that from \eqref{n/m}, we have the condition $nl \ll m(X/y_j)^2$. Therefore, using \eqref{S1x}, the contribution of the terms considered in Case I  is
$O\l( X^{\frac{3}{2}+\ve}\r)$,
by \eqref{Weil}, \eqref{lambda1I} and \eqref{n/m},
where we have extended the definition $y_j=X/2^j$ to $j\geq k+3$ by positivity.\\
\textbf{Case II:} $Y > \frac{y_j}{2}$.
By \eqref{secondderi}, the first inequality of \eqref{caseII} and Prop. \ref{10.1}, this case contributes $O\l(X^{\frac{3}{2}+\ve}\r)$ in $Q_1(X)$ (see \eqref{S1u}). Combining this with the bound in Case I, we prove Prop. \ref{S_1U}.
\end{proof}

\subsection{Estimation of $\mathcal{T}$} We take advantage that the length of each of the intervals is small: $\mathcal{L}(\mathcal{I}_j') = \delta$. 
\begin{proposition}\label{SU}
For $\delta = \frac{1}{100}$, we have
\ba 
\mathcal{T} \ll X^{\ve}\l(\sqrt{X} \l(\frac{X}{H}\r)^{3/2} + X^{3/2}\r). \nonumber
\ea 
\end{proposition}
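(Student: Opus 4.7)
The plan is to exploit the fact that each interval $\mathcal{I}'_j$ has length $O(\delta)$, which tightly restricts the admissible triples $(m,n,a_1)$ contributing to $\mathcal{T}$. First I would decompose $\mathcal{T} = \sum_{j=1}^{k+2} \mathcal{T}_j$, where $\mathcal{T}_j$ collects terms whose stationary point $x_0 = \sqrt{ma_1X/n}$ lies in $\mathcal{I}'_j$. For $1 \le j \le k+1$, writing $Y_j = X/2^{j-1}$ and squaring $|x_0 - Y_j| < \delta$ yields $|ma_1X - nY_j^2| \ll \delta nY_j$, so $a_1$ is forced into an interval of length $\ll \delta nY_j/(mX)$ centred at $a_1^{\ast} := nY_j^2/(mX)$. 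For $j = k+2$ the same manipulation gives $|mX - l^2 n a_1| \ll \delta l n$, placing $a_1 \asymp mX/(l^2 n)$ in an interval of length $O(1/l) = O(1)$.

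Next, I would bound the exponential integral $I_{m,n,a_1}$ by partitioning $[la_1,X]$ into $[la_1, x_0/2]$, $[x_0/2, 2x_0]$, $[2x_0, X]$. On the outer pieces $|h'(x)| \gg n/a_1$, so Lemma \ref{Stein} with $k=1$ yields a contribution $\ll a_1/n$; on the middle piece $|h''(x)| \asymp mX/x_0^3$, and the second-derivative bound yields $\ll \sqrt{x_0^3/(mX)} \asymp Y_j^{3/2}/\sqrt{mX}$. Since $Y_j \le X \le mX$ gives $a_1/n \asymp Y_j^2/(mX) \le Y_j^{3/2}/\sqrt{mX}$, the second-derivative term dominates, so $|I_{m,n,a_1}| \ll Y_j^{3/2}/\sqrt{mX}$.

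Combining this with the Weil bound $|S(nr_1,-m,a_1)| \ll (nr_1,m,a_1)^{1/2}\,a_1^{1/2+\ve}$, I would split into two regimes according to how many integer $a_1$ lie in the admissible interval. In \emph{Case A} ($n \le mX/(\delta Y_j)$) there are $O(1)$ valid $a_1$; substituting $a_1 \asymp a_1^{\ast}$, executing the $n$-sum up to $mX/(\delta Y_j)$ against $n^{-1/2+\ve}$, then the $m$-sum up to $X/H$ against $m^{-1/2+\ve}$, and summing over $j$ and $l \mid r$, one obtains $\ll X^{1/2+\ve}(X/H)^{1/2}$, which is $\ll X^{3/2+\ve}$ since $H \ge \sqrt{X}$. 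In \emph{Case B} ($n > mX/(\delta Y_j)$) the admissible range of $a_1$ contains $\asymp \delta nY_j/(mX)$ integers, and $\sum_{a_1} a_1^{-1/2+\ve}(nr_1,m,a_1)^{1/2}$ is estimated as arc-length times the value at the centre, $\asymp \delta\,n^{1/2+\ve}(mX)^{-1/2-\ve}Y_j^{2\ve}$. Summing $n^{1/2+\ve}$ up to $X/(lH)$, $m^{-2-\ve}$ over $m$, and then dyadically over $j$ using the geometric decay $\sum_j Y_j^{3/2+\ve} \asymp X^{3/2+\ve}$ (dominated by $j=1$), one recovers $\ll \delta\,\sqrt{X}(X/H)^{3/2}X^{\ve}$. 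The $l$-sum contributes only $\tau(r) \ll X^{\ve}$, and the $j=k+2$ piece is controlled by the Case A argument.

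The main obstacle is the bookkeeping in Case B: one must verify that summing $a_1^{-1/2+\ve}$ times the gcd factor over the short arc of length $\delta nY_j/(mX)$ produces exactly the $\delta\,n^{1/2+\ve}(mX)^{-1/2-\ve}$ needed, and that the outer $j$-sum, despite ranging dyadically up to the scale $X$, does not accumulate logarithmic losses beyond what $X^{\ve}$ absorbs — this is precisely where the geometric sum $\sum_j (X/2^{j-1})^{3/2+\ve} \asymp X^{3/2+\ve}$ is used. The choice $\delta = 1/100$ plays no essential role in the bound itself but is fixed in order to guarantee a clean partition $[1,X^{5/4}] = (\bigsqcup \mathcal{I}_j) \sqcup (\bigsqcup \mathcal{I}'_j)$ compatible with the stationary-phase analysis performed on the complementary sum $\mathcal{S}$.
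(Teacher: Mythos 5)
Your proposal is correct and follows the paper's proof in its essentials: the same split of the integration range around the localized stationary point $x_0\approx X/2^{j-1}$ (the paper writes it as $[la_1,y_j]\cup[y_j,4y_j]\cup[4y_j,X]$ with $y_j=X/2^j$, which is the same decomposition), the same second-derivative bound $\ll x_0^{3/2}/\sqrt{mX}$ on the central piece, the same restriction of $a_1$ to a short arc coming from $|x_0-X/2^{j-1}|<\delta$, and the same Weil-bound bookkeeping split by whether that arc contains $O(1)$ or $\gg 1$ integers. The one small refinement you make — noting directly that the first-derivative bound $a_1/n$ on the outer pieces is dominated by the central second-derivative estimate — is a tidier alternative to the paper's appeal back to the argument of Proposition~\ref{S_1U} for $\mathcal{T}_1$ and $\mathcal{T}_3$, but leads to the same final bound.
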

\begin{proof}
Recalling \eqref{stationarypoint1},  \eqref{I_j'}, \eqref{B-decomp} and  \eqref{ydef}, 
we write $\mathcal{T}$  as 
\ba
&\mathcal{T} = \frac{1}{2\pi i} \sum_{l|r} \sum_{a_1 >0}\sum_{1 \leq j \leq k+2}\mathop{\sum_{1 \leq m \ll \frac{X^{1+\ve}}{H}}\sum_{1 \leq n \ll \frac{X^{1+\ve}}{lH}}}_{ \sqrt{\frac{ma_1X}{n}} \in (2y_j-\delta, 2y_j+ \delta)}\frac{1}{m}\frac{w(la_1)S(nr_1,-m,a_1)}{a_1} \l[\int\limits_{la_1}^{y_j}+ \int\limits_{y_j}^{4y_j} + \int\limits_{4y_j}^{X}\r] e(h(x)) \diff x \\
& = \mathcal{T}_1 + \mathcal{T}_2 + \mathcal{T}_3, \nonumber
\ea
say. By an argument similar to the proof Prop. \ref{S_1U}, it follows  that $
\mathcal{T}_1, \mathcal{T}_3 \ll  X^{3/2+\ve}$. For the estimation of $\mathcal{T}_2$, we bring the $j$-sum outside \eqref{ydef} as before. Thus, we obtain
\ba 
&\mathcal{T}_2 \ll \sum_{l|r}\sum_{1 \leq j \leq J(X) +2} \sum_{1 \leq m \ll \frac{X^{1+\ve}}{H}}\sum_{1 \leq n \ll \frac{X^{1+\ve}}{lH}} \sum_{\substack{ 0 < la_1 \leq 2y_j \\ \sqrt{\frac{ma_1X}{n}} \in (2y_j-\delta, 2y_j+ \delta) } } \frac{w(la_1) }{m}
 \frac{|S(nr_1,-m,a_1)|}{a_1} \l| \int\limits_{y_j}^{4y_j} e(h(x)) \diff x \r|. \nonumber
\ea
Since $x_0 \asymp y_j$, by \eqref{k-der} with $k=2$ and recalling \eqref{secondderi}, we get \(
\int\limits_{y_j}^{4y_j} e(h(x)) \diff x \ll \frac{m^{\frac{1}{4}}X^{\frac{1}{4}}a_1^{\frac{3}{4}}}{\sqrt{2}n^{\frac{3}{4}}}.\)
Now the location of $x_0$ determines the size of $a_1$ (as in \eqref{n/m}) and now, by an application of \eqref{Weil}, we get $
\mathcal{T}_2 \ll\sqrt{X} \l(X/H\r)^{3/2+\ve}.$
\end{proof}
\subsection{Analysis of $\mathcal{S}_3$, $\mathcal{S}_5$, $\mathcal{S}_7$ and the proof of Prop. \ref{G}}\label{station}
Since $x_0 \in \mathcal{I} _j$, and $\mathcal{I} _j \subset \mathcal{F}_j$, we extract the stationary phases by applying Lemma \ref{stationary} and thus  obtain that for $1 \leq j \leq k+1$,
\ba \label{mt}
\int\limits _{\mathcal{F}_j}e(h(x)) \diff x = \frac{m^{\frac{1}{4}}X^{\frac{1}{4}}a_1^{\frac{3}{4}}}{\sqrt{2}n^{\frac{3}{4}}} e\l(-2 \sqrt{\frac{mnX}{a_1}}-\frac{1}{8}\r)+ O(R_1)+O(R_2),
\ea
where
\be  \nonumber
R_1 = \frac{1}{\lambda_2(x_0 -y_j )} + \frac{1}{\lambda_2( 2y_j - x_0 )}, \text{ and }
R_2 = \frac{\mathcal{L}(\mathcal{F}_j)\lambda_4}{\lambda_2^2}+\frac{\mathcal{L}(\mathcal{F}_j)\lambda_3^2}{\lambda_2^3}, 
\ee except in case of $\mathcal{F}_{k+1}$. In this case, the lower end point of the integral is $la_1$, and consequently
\be \nonumber
R_1 =  \frac{1}{\lambda_2(x_0 -la_1 )} + \frac{1}{\lambda_2( 2y_{k+1} - x_0 )} 
\ee
and everything else remains unchanged. As in the proof of Prop. \ref{S_1U}, by introducing dyadic subdivision of the range of the stationary point and analyzing the ensuing restrictions on the variables $m, n$ and $a_1$, we  show that the contributions of the error terms in 
 $\mathcal{S}_3$, $\mathcal{S}_5$ and $\mathcal{S}_7$ is  $O\l(X^{\frac{3}{2}+\ve}\r)$ by \eqref{Weil}.
Furthermore, from \eqref{mt}, we observe that the stationary phase analysis of the integrals in $\mathcal{S}_3,\mathcal{S}_5,\mathcal{S}_7$ give the same main term. Combining all these estimates in one place, we conclude that
\ba \nonumber
&Q_3(X) :=\mathcal{S}_3 + \mathcal{S}_5 + \mathcal{S}_7 = \frac{X^{\frac{1}{4}}}{2\sqrt{2}\pi i}\mathop{\sum_l\sum_m\sum_n\sum_{a_1 >0 } }_{x_0 \in \cup _{1 \leq j \leq k +1} \mathcal{I} _j}\frac{w(la_1)S(nr_1,-m,a_1)}{a_1} \l(\frac{a_1}{mn}\r)^{\frac{3}{4}}e\l(\mathcal{H}(X) \r) + O\l(X^{\frac{3}{2}+\ve}\r).
\ea
Now we extend the range of $x_0$ to the whole interval $[la_1, X ]$ and  subtract the contribution of the extra terms which is $O\l(X^{\frac{3}{2}+\ve}\r)$ by the same argument used in the proof of Prop. \ref{SU}. Thus, we have
\ba \label{S_5}
Q_3(X) &= \frac{X^{\frac{1}{4}}}{2\sqrt{2}\pi i}\mathop{\sum_l\sum_m\sum_n\sum_{a_1 >0}}_{\sqrt{\frac{ma_1X}{n}} \in [la_1, X ]}\frac{w(la_1)S(nr_1,-m,a_1)}{a_1}\l(\frac{a_1}{mn}\r)^{\frac{3}{4}}e\l( \mathcal{H}(X) \r) + O\l(X^{\frac{3}{2}+\ve}\r).
\ea 
Combining Prop. \ref{S_1U}, Prop. \ref{SU}, \eqref{S_5} and also considering the case $m, n<0$, we prove Prop. \ref{G}. 

\subsection{Contribution of $I_5$}
\begin{proposition}\label{BXI_5}
We have,
\ba \nonumber
 B_{1, w}(X, I_5) &=  \int \limits_{X} ^{ X+ H} \frac{w(u)}{u^{1/4}} \mathop{\sum_l\sum_m\sum_n \sum_{a_1 >0}}_{ la_1\leq \sqrt{\frac{ma_1u}{n}} \leq X } \frac{w (la_1)  S(nr_1, -m, a_1)}{\sqrt{2}a_1}\l(\frac{a_1}{mn}\r)^{\frac{1}{4}} \bigg \{e\l(\mathcal{H}(u)\r) + e\l( \mathcal{H}(u)\r)\bigg\} + O\l(  X^{3/2+\ve} \r) .
\ea

\end{proposition}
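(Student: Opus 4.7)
The plan is to reduce the analysis of $I_5$ to the same stationary-phase machinery that produced Proposition \ref{G} for $I_4$, after first performing a change of variables that exposes the outer $u$-integral appearing in the statement. In $I_5$ (see \eqref{hatGintegral}), the inner $y$-variable runs over the short interval $[a_1X/x,\, a_1(X+H)/x]$. Substituting $u = xy/a_1$ rescales this to $u \in [X, X+H]$, turns $w(xy/a_1)$ into $w(u)$, turns $w(ly)$ into $w(la_1 u/x)$, changes $e(-my/a_1)$ into $e(-mu/x)$, and introduces a Jacobian $a_1/x$. After exchanging the order of integration so that $u$ becomes outermost,
\[
B_{1,w}(X, I_5) = \int_X^{X+H} w(u) \sum_{l, a_1, m, n} \frac{w(la_1)}{a_1^2}\, S(nr_1,-m,a_1)\, J(u, a_1, m, n)\, \diff u,
\]
where
\[
J(u, a_1, m, n) := \int_{la_1}^X w\l(\frac{la_1 u}{x}\r) e\l(-\frac{nx}{a_1} - \frac{mu}{x}\r) \frac{a_1}{x}\, \diff x.
\]

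Next, I would analyse $J$ by the same strategy as in Section \ref{conI4}. The phase $h(x) = -nx/a_1 - mu/x$ has stationary point $x_0 = \sqrt{ma_1u/n}$ when $mn>0$, and it lies in $[la_1, X]$ precisely under the condition stated in the proposition. The only new feature compared to the $I_4$ analysis is the smooth amplitude $w(la_1u/x)\cdot a_1/x$; by \eqref{w-prime} and the monotonicity of $a_1/x$, its derivatives are of the expected size on the critical window, so the weighted stationary phase lemma (Lemma \ref{Huxley}) applies. A direct computation with $|h''(x_0)| = 2mu/x_0^3$ gives the main term of $J$ as
\[
w\l(l\sqrt{\tfrac{a_1 u n}{m}}\r)\cdot \frac{a_1^{5/4}}{\sqrt{2}\,(mn)^{1/4}\, u^{1/4}}\, e\l(-2\sqrt{\tfrac{mnu}{a_1}} - \tfrac{1}{8}\r).
\]
Throughout the bulk of the summation range the argument $l\sqrt{a_1un/m}$ lies in $[1,X]$, so the extra $w$-factor equals $1$ by \eqref{w1}; the thin transition region, together with the $R_1, R_2$ errors from the stationary-phase lemma, contributes only to the error term via Proposition \ref{10.1} and the Weil bound.

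The case $mn<0$ (no stationary point) is handled by the first-derivative bound; the cases where $x_0$ is too close to the endpoints of $[la_1, X]$ are handled by the dyadic decomposition of the location of $x_0$ used in Sections \ref{estiT}--\ref{station} (see \eqref{I_j}, \eqref{F_j} and the treatment of $\mathcal{S}_1, \mathcal{T}, \mathcal{S}_3, \mathcal{S}_5, \mathcal{S}_7$). Since all those bounds are uniform in the parameter $u$, the outer integration $\int_X^{X+H} \diff u$ multiplies the resulting estimate by at most a factor $H \le X$ that is already absorbed into the target error $O(X^{3/2+\ve})$. The second main term $+e(2\sqrt{mnu/a_1}+\tfrac18)$ in the proposition comes from the complementary case $m, n < 0$: the stationary point is at the same location (after taking absolute values), but $h''(x_0)$ now has the opposite sign, producing the opposite Fresnel phase.

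The main obstacle is verifying that the amplitude factor $w(la_1u/x)$ does not disturb the stationary-phase estimates uniformly in all of $u, a_1, m, n, x$. Its $x$-derivatives are bounded by \eqref{w-prime} in the same way as the bare $w$-factor, but the factor itself is only nontrivial for $x$ in a short transition window around $x = la_1 u$ or $x = la_1 u/X$; once the contribution of this window is bounded by Proposition \ref{10.1}, the entire apparatus of Sections \ref{conI4}--\ref{station} transfers essentially verbatim, with the outer $u$-integration being tracked as a harmless bookkeeping factor.
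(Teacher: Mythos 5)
Your change of variable $u = xy/a_1$, the identification of the phase $h(x)=-nx/a_1-mu/x$ and stationary point $x_0=\sqrt{ma_1u/n}$, the need for the weighted stationary phase lemma (Lemma \ref{Huxley}) because of the $1/x$ amplitude, and the transfer of the dyadic decomposition from \S\ref{estiT}--\S\ref{station} are all exactly what the paper does, so the overall route is the same. But two places need more care than you give them.

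First, you cannot feed the amplitude $w(la_1u/x)\cdot a_1/x$ directly into Lemma \ref{Huxley} and then argue afterwards that the resulting $w\bigl(l\sqrt{a_1un/m}\bigr)$ is $1$ in the bulk: the factor $w(la_1u/x)$ is nonconstant only on the interval $[la_1,\,la_1u/X]$ of length $\asymp la_1H/X$, where its $x$-derivative is $\asymp X/(la_1H)$. This is not of the ``expected size'' $\asymp 1/(la_1)$ coming from $a_1/x$; it is larger by a factor $X/H\geq\sqrt{X}$, so the derivative bounds $|g^{(s)}(x)|\leq C_sT/V^s$ of Lemma \ref{Huxley} fail with the natural $V\asymp la_1$. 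The paper instead strips this factor first: it splits off the short interval $[la_1,la_1u/X]$, bounds it trivially, and is left with the amplitude $a_1/x$ alone on the remaining range (at cost $O(a_1H^2/X)$ per $I_5$, absorbed by Proposition \ref{10.1}). You do gesture at this at the end of your proposal, but your computation of the ``main term'' with the spurious $w\bigl(l\sqrt{a_1un/m}\bigr)$ factor is inconsistent with that removal and should be dropped.

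Second, the sentence ``the outer integration $\int_X^{X+H}\diff u$ multiplies the resulting estimate by at most a factor $H\leq X$ that is already absorbed into the target error $O(X^{3/2+\varepsilon})$'' is not justified. If the per-$u$ error bound were the same $O(X^{3/2+\varepsilon})$ as in the $I_4$ case, integrating in $u$ would give $O(HX^{3/2+\varepsilon})$, which is too large. What actually happens is that the $I_5$ inner $x$-integral carries the amplitude $1/x\asymp1/x_0$ and lacks the prefactor $1/m$ present in the $I_4$ case; in the critical ranges $m\asymp X/H$, $x_0\asymp X$ these produce a factor $\asymp 1/X$ versus $\asymp H/X$, which exactly compensates the extra $H$ from the $u$-integral. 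The paper makes precisely this comparison explicit (``there is a factor $1/m$ in the case of $I_4$ which is not present here and there is an extra factor $1/x$ in $I_5$ \ldots the two integrals $I_4$ and $I_5$ have the same shape''), and you should do the same rather than assert absorption.
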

\begin{proof}
Here we only give a sketch as the proof is very similar to the proof of Prop. \ref{G}. 
From \eqref{hatGintegral} and by change of variable $y=ua_1/x$, we have
\ba 
I_5 &=a_1\int\limits _ X^{X+H} w(u) \int\limits_{la_1}^X \frac{1}{x} w\l(\frac{a_1ul}{x} \r)  e\l(\frac{-nx}{a_1}\r) e\l(\frac{-mu}{x}\r) \diff x \diff u. \nonumber
\ea
Since $u \geq X$, by \eqref{w1} of $w$, we write $I_5$ as 
\ba \nonumber
I_5 &= a_1\int\limits _ X^{X+H} w(u)\l[ \int\limits_{la_1u/X}^X \frac{1}{x}  e\l(\frac{-nx}{a_1}\r) e\l(\frac{-mu}{x}\r) 
+ \int\limits_{la_1}^{la_1u/X} \frac{1}{x} w\l(\frac{a_1ul}{x} \r)  e\l(\frac{-nx}{a_1}\r) e\l(\frac{-mu}{x}\r) \r]\diff x \diff u.
\ea
Let us consider the second integral first. Since $u \leq X+H$, the length of the $x$-integral is bounded by $la_1 H/X$. Hence the second integral is $(a_1 H^2/X)$. For the first integral,  we write
\[
 \int\limits_{la_1u/X}^X=\int\limits_{la_1}^X-\int\limits_{la_1}^{la_1u/X}.
\]
Estimating the second integral trivially, we finally get by Prop. \ref{10.1},
\ba \label{I_5integral}
I_5=a_1\int \limits_{X} ^{ X+ H} w(u) \int\limits _{la_1} ^X  \frac{1}{x}e\l(\frac{-nx}{a_1}\r)  e\l(\frac{-mu}{x}\r) \diff x \diff u
+ O\l(X^{3/2+\ve}\r).
\ea
Let us now compare the above expression for $I_5$ with that of $I_4$. The outside integral over $u$ plays practically no role and we can insert the sum over $m,n,$ and $a_1$ inside and treat $u$ as a constant of size $X$. Thus our phase function is essentially the same as before except that $X$ has been replaced by $u$. However, when we execute the outside integral  at the end, this gives rise to a factor of size $H$. Also, there is a  factor $1/m$ in the case of $I_4$ which is not present here and   there is an extra factor $1/x$ in $I_5$ where $x$ varies from $la_1$ to $X$. In the crucial ranges of the variables; i.e., for $a_1 \asymp X/l$ and $m\asymp X/H$, we see that the two integrals $I_4$ and $I_5$ have the same shape. The same analysis that was carried out in the case of $I_4$  (\S\ref{I5one} to \S\ref{station}) applies here too, with the same kind of subdivisions of the range of the stationary point $x_0 = \sqrt{\frac{ma_1u}{n}}$ and the range of the $x$-integral. However, due to the presence of the factor $1/x$, we need to use Lemma \ref{Huxley} instead of Lemma \ref{stationary}. 
\end{proof}
\subsection{Removal of the sharp-cut condition and the proof of Proposition \ref{ET}}\label{final}
\noindent
\textit{ To  simplify notation, let us, henceforth, denote $a_1$ by $c$ with the understanding that this has no connection the original variable $c$. }\\
At this stage we have expressed $\widehat{G}(n/c, m/c)$ in a form that enabled us to see its oscillating behaviour and its order of magnitude. The condition 
$lc\leq \sqrt{\frac{mcX}{n}} \leq X$ appearing in the sum over $c$ in Prop. \ref{G} is equivalent to saying 
$0 < c \leq \min\{\frac{mX}{nl^2}, \frac{nX}{m}, \frac{X}{l}\}$. However, we can  drop the condition  $a_1\leq \frac{X}{l}$ because of the presence of $w(la_1)$ in our sum and in view of Remark \ref{largea1}. We now need to make the variable smooth. We show how to do that only for the first term in Prop. \ref{G} since the similar analysis holds for the second term in Prop. \ref{G} and the terms in Prop. \ref{BXI_5}. The condition 
$
c \leq \min\l \{\frac{mX}{nl^2}, \frac{nX}{m}\r \} \nonumber
$
can be restated as
\ba \label{othercase}
c \leq  \frac{mX}{nl^2} \textnormal{ when }  m \leq nl ,
\textnormal{ and }
c \leq  \frac{nX}{m} \textnormal{ when } nl \leq m .
\ea 
\begin{proposition}\label{nx/m} We have,
\ba
&\frac{ X^{\frac{1}{4}}e (-1/8)}{2\sqrt{2}\pi i}\mathop{\sum_{l, m,n}} _{nl \leq m}  \sum_ { 0 < c \leq \frac{nX}{m}}  \frac{ S(nr_1, -m, c)}{c}w (lc)
\l(\frac{c}{mn}\r)^{\frac{3}{4}}e\l(-2 \sqrt{\frac{mnX}{c}}\r) \\
&=\frac{ X^{\frac{1}{4}}e (-1/8)}{2\sqrt{2}\pi i} \mathop{\sum_{l, m,n}} _{nl \leq m}  \sum_ { c >0}  \frac{S(nr_1, -m, c)}{c}w \l(\frac{mc}{n} \r)  
\l(\frac{c}{mn}\r)^{\frac{3}{4}}e\l(-2 \sqrt{\frac{mnX}{c}} \r) 
+O\l(X^{1+\ve}\sqrt{H}\r). \nonumber
\ea
An analogous statement holds in the complementary situation, i.e.,
\ba \nonumber
&\frac{ X^{\frac{1}{4}}e (-1/8)}{2\sqrt{2}\pi i} \mathop{\sum_{l, m,n}} _{m \leq nl}  \sum_ { 0 < c \leq \frac{mX}{nl^2}}  \frac{ S(nr_1, -m, c)}{c}w (lc)
\l(\frac{c}{mn}\r)^{\frac{3}{4}}e\l(-2 \sqrt{\frac{mnX}{c}} \r) \\
&=\frac{ X^{\frac{1}{4}}e (-1/8)}{2\sqrt{2}\pi i} \mathop{\sum_{l, m,n}} _{m \leq nl}  \sum_ { c >0}  \frac{ S(nr_1, -m, c)}{c}w \l(\frac{cnl^2}{m}\r) 
\l(\frac{c}{mn}\r)^{\frac{3}{4}}e\l(-2 \sqrt{\frac{mnX}{c}}\r) +O\l(X^{1+\ve}\sqrt{H}\r).
\ea
\end{proposition}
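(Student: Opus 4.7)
The plan is to exhibit the LHS and RHS of Proposition \ref{nx/m} as weighted sums over $c$ that agree on the bulk of the $c$-range, with the discrepancy supported on a short transition interval whose contribution I will estimate by the Weil bound on Kloosterman sums.

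First I would verify that the two weight functions coincide on the ``main'' range. The hypothesis $nl \leq m$ gives, for $1 \leq c \leq nX/m$, the inequalities $lc \leq nlX/m \leq X$ and $1 \leq mc/n \leq X$; hence by \eqref{w1} both $w(lc) = 1$ and $w(mc/n) = 1$, so the sharp-cut weight $w(lc)\cdot[c \leq nX/m]$ and the smooth weight $w(mc/n)$ agree on integers $c \in [1, nX/m]$. For $c > nX/m$ the sharp cut kills the LHS, while the RHS retains the factor $w(mc/n)$, which by \eqref{w0} is supported on $c \in (nX/m,\, n(X+H)/m]$. Consequently the error LHS$-$RHS equals
\[
E = -\frac{X^{1/4}e(-1/8)}{2\sqrt{2}\pi i} \sum_{l\mid r}\mathop{\sum\sum}_{\substack{nl \leq m \\ 1 \leq m \ll X^{1+\ve}/H \\ 1 \leq n \ll X^{1+\ve}/(lH)}}\sum_{nX/m < c \leq n(X+H)/m}\frac{S(nr_1,-m,c)}{c}\l(\frac{c}{mn}\r)^{3/4}w\l(\frac{mc}{n}\r)e\l(-2\sqrt{\frac{mnX}{c}}\r),
\]
and it remains to show $|E| \ll X^{1+\ve}\sqrt{H}$. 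The transition interval has length $nH/m$, so contains $O(nH/m + 1)$ integers.

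Next I would invoke the Weil bound $|S(nr_1,-m,c)| \ll \tau(c)(nr_1,m,c)^{1/2} c^{1/2}$. For $c \asymp nX/m$ this yields the per-summand bound $X^{\ve}(nr_1,m,c)^{1/2} X^{1/4}/(mn^{1/2})$, and splitting the divisor sum by $d\mid(nr_1,m)$ gives the elementary estimate
\[
\sum_{c \in (nX/m,\, n(X+H)/m]} (nr_1,m,c)^{1/2} \ll \frac{nH}{m}(nr_1 m)^{\ve} + (nr_1,m)^{1/2+\ve},
\]
so $|E|$ splits into a ``length-of-interval'' contribution and a ``diagonal'' contribution to be estimated separately.

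The decisive observation producing the exponent $\sqrt{H}$ is the interplay between the hypothesis $nl \leq m$ (i.e.\ $n \leq m/l$) and the upper bound $m \ll X/H$, which makes $n \leq m/l$ tighter than the generic $n \ll X/(lH)$. Summing the length contribution against the tight $n$-range gives
\[
X^{1/4}H \sum_{m \ll X/H} m^{-2}\sum_{n \leq m/l} n^{1/2} \ll \frac{X^{1/4}H}{l^{3/2}}\sum_{m \ll X/H} m^{-1/2} \ll \frac{X^{3/4}\sqrt{H}}{l^{3/2}},
\]
which, after the outer prefactor $X^{1/4}$ and the $\sum_{l\mid r}$, produces exactly $X^{1+\ve}\sqrt{H}$. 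The diagonal contribution, estimated via $(nr_1,m) \leq m$, is of order $X^{3/2+\ve}/H$, which is dominated by $X^{1+\ve}\sqrt{H}$ throughout our range $H \geq \sqrt{X}$. The complementary case $m \leq nl$ is handled by the identical argument after relabelling, with the tight constraint $m \leq nl$ now playing the role of $n \leq m/l$.

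The main obstacle is recognising that the tight constraint $n \leq m/l$ (forced by $nl \leq m$), rather than the looser $n \ll X/(lH)$, must be used throughout the summation; a naive application of Weil's bound using the looser range falls short by a factor of $\sqrt{X/H}$, yielding only $X^{2+\ve}/\sqrt{H}$, which would be insufficient for the error term in Prop.\ \ref{ET}.
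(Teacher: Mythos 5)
Your proof is correct and follows essentially the same route as the paper's: both compare the sharp-cut and smooth $c$-weights, observe they agree (both equal to $1$ by \eqref{w1}) on $c\in[1,nX/m]$ once $nl\leq m$ forces $lc\leq X$ and $mc/n\leq X$, isolate the transition window $(nX/m,\,n(X+H)/m]$, and bound its contribution with the Weil bound, crucially summing $n$ over the tight range $n\leq m/l$ rather than the generic $n\ll X/(lH)$. The paper phrases the comparison as two successive replacements (first $w(lc)\to 1$ with an error term over $X/l\leq c\leq (X+H)/l$ that is actually vacuous in this case, then $[c\leq nX/m]\to w(mc/n)$), which is cosmetically different from your single direct comparison; your explicit remark that the looser $n$-range would only yield $X^{2+\ve}/\sqrt{H}$ makes transparent what the paper's ``by the Weil bound'' leaves tacit.
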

\begin{proof}
In the  case $nl \leq m$, $nX/m \leq X/l$ and hence, by comparing the range of $c$ with the support of $w$ (see \eqref{w0}, \eqref{w1} and \eqref{w-support}), we have (suppressing the sums over $l, m,n$)
\ba 
\sum_ {0 < c \leq \frac{nX}{m}}  w (lc) \cdots
&=\sum_ { 0 < c  \leq \frac{nX}{m}}1. \cdots + O\l(\sum_{\frac{X}{l} \leq c \leq \frac{X+H}{l}}  w (lc) \cdots\r) \\ 
& = \sum_ {c >0} w \l(\frac{mc}{n}\r) \cdots 
+ O\l(  \sum_ {X< \frac{mc}{n} \leq X+H)}w \l(\frac{mc}{n}\r) \cdots\r) + O\l( X^{1+\ve}\sqrt{H}\r)
\nonumber
\ea
and the error term is $O\l( X^{1+\ve}\sqrt{H}\r)$ by \eqref{Weil}. 
\end{proof}

\section{Proof of Proposition \ref{second prop}: initial steps}\label{secondinitial}
It is enough to consider $R_{1,w}(X)$ and $R_{3,w}(X)$ occurring in Prop. \ref{ET} since the other sums  occurring there can be handled in a similar manner.
\subsection{Analysis of $R_{1,w}(X)$} 
We introduce smooth dyadic partitions of unity to the sums over $m, n,$ and $c$ in
 $R_{1,w}(X)$. Let $b_1, b_2,$ and $V$ be smooth functions with support inside the interval $[1,4]$ such that
 \[
 \sum_{j\geq 0}V(x/2^j)=\sum_{j\geq 0}b_i(x/2^j)=1 \ (i=1,2),\quad \text{and} \quad b_i^{(k)}(x), V^{(k)}(x) \ll _k 1 \ (i =1,2)
 \]
for every $x>0$. Thus we have,
\ba \label{4/5}
R_{1,w}(X)=\frac{ X^{\frac{1}{4}} }{2\sqrt{2}\pi i}e\l(-\frac{1}{8}\r)\sum_{l |r}  \mathop{\sum_{M \text{ dyadic}} \sum_{N \text{ dyadic}} \sum_{\substack{ C \text{ dyadic} \\ C \geq \frac{X^{\frac{4}{5}+\ve}}{l} } }} 
\mathop{\sum_m\sum_n}_{nl\leq m} \cdots
 +O\l(X^{\frac{5}{4}+\ve}\l(\frac{X}{H}\r)^{\frac{1}{2}} \r),
\ea
where we have collected the contributions of the smaller $c$'s in the error term after applying \eqref{Weil} (see Prop. \ref{10.1}). 
It is, therefore,  enough to estimate the sum 
\ba  \nonumber
R_1= R_1 (M,N,C,X):= \mathop{\sum_ {m }\sum_{n }}_{nl \leq m} b_1 \l(\frac{m}{M}\r)b_2\l(\frac{n}{N}\r) \sum _ c \frac{S(nr_1,-m,c)}{c}e\l(-2 \sqrt{\frac{mnX}{c}}\r)
w \l(\frac{mc}{n}\r) V\l(\frac{c}{C}\r) g(m,n,c),
\ea
where 
\be \label{gmnc}
g(m,n,c) := \l(\frac{c}{mn}\r)^{\frac{3}{4}}
\ee
and  $l$ is a fixed divisor of  $r$.
Let us define
\be \label{c_0}
C_0 := \max\l\{ 2^{j} :2^{j} \leq   \frac{X^{\frac{4}{5}+\ve}}{l} \r\},
\ee
so that the contribution of the terms corresponding to $C\leq C_0$  is absorbed in the error term after we make the choice $H=\sqrt{X}$ at the end. Recall  that $1 \leq M \ll \frac{X^{1+\ve}}{H}$, $1 \leq N\ll \frac{X^{1+\ve}}{lH}$  and from \eqref{w-support}, we have $C \leq \frac{8NX}{M}$ since $N \leq n \leq 4N$ and $M \leq m \leq 4M$.  With these notations, we have:
\begin{proposition}\label{kuz1}
For fixed $M,N,$ and $C$ such that
\be \label{dya}
C_0 \leq  C \leq \frac{8NX}{M}, 1 \leq M \ll \frac{X^{1+\ve}}{H},\textnormal{ and } 1 \leq N \ll \frac{X^{1+\ve}}{lH},
\ee
we have
\ba \nonumber
&R_1 \ll r_1^{\theta} X^{\frac{3}{4}+\ve} \sqrt{MN} \bigg (\l(\frac{C}{Xl}\r)^{\frac{1}{4}}\frac{1}{\sqrt{N}}+ \l(\frac{C}{Xl}\r)^{\frac{3}{4}}\frac{(M+N)}{N^{\frac{3}{2}}}+ \l(\frac{C}{Xl}\r)^{\frac{5}{4}}\frac{(MN)}{N^{\frac{5}{2}}} \bigg )+  r^{\theta} X^{\frac{3}{4}+\ve}N  \\
& + r_1^{\theta}X^{\ve} \l(\frac{C}{N^2l}\r)^{\frac{3}{4}}\l[ MN \l( \sqrt{\frac{MC}{NH}} + \sqrt{M} + \sqrt{N} + \frac{\sqrt{N}}{(MN)^{\frac{1}{4}}} + \sqrt{\frac{X}{H}} \r)+  (MN)^{\frac{3}{4}} M  \r] + r_1^{\theta} X^{3/4+\ve} M ^{23/32} \\
&\ll r^{\theta} X^{\frac{3}{4}+\ve} X/H.
\ea
\end{proposition}
\subsection{Analysis of $R_{3,w}(X)$  }
The analysis of this sum is similar to that of $R_{1,w}(X)$.
Note that the variable $u$ in the sum $R_{3, w} (X)$ plays a  role analogous to that of 
the parameter $X$ inside the phase function in  $T_{1, w} (X)$ and we can work with a fixed value of $u$ and then integrate trivially. Moreover, the size of $u$ is the same as that of $X$, in fact $u=X+O(H)$ and $H$ is chosen to be $\sqrt{X}$ in the end. 
Therefore, the $u$-integral outside contributes a factor of size $O(X^{-1/4} H)$ which has the same effect as multiplying $X^{1/4}$ to the inner
sum over $l, m,n,$ and $c$. Thus, the essential difference between $R_{1, w} (X)$ and $R_{3, w} (X)$ is that in $R_{3, w} (X)$,
we have the term $(c/mn)^{1/4}$ in the place of $(c/mn)^{3/4}$ in $R_{1, w} (X)$. Note that the condition $nl\leq m$ is now replaced by
$nl\leq mu/X$, but since $u$ and $X$ are of the same size, this change does not make much of a difference. The same comment applies to the argument of the function $w$. 
We  estimate the initial  part of the sum over $c$ by \eqref{Weil} and we truncate the sum at the point
\be\label{c01}
C_0^{\star}:=\max\l\{ 2^{j} :2^{j} \leq   \frac{X^{\frac{2}{3}+\ve}}{l} \r\},
\ee
 for a suitable $\ve>0$. 
By Prop. \ref{10.1}, we have, 
\ba\label{2/3}
\frac{e\l(\frac{1}{8}\r)}{\sqrt{2}} &\int \limits_{X} ^{ X+ H} \frac{w(u)}{u^{1/4}} \mathop{\sum_{l, m,n}}_{ nl\leq \frac{mu}{X}}\sum_{0<c\leq C_0^{\star}} 
 \frac{S(nr_1, -m, c)}{c}
\l(\frac{c}{mn}\r)^{\frac{1}{4}}e\l(-2 \sqrt{\frac{mnu}{c}} \r) w\l(\frac{mc}{n}\frac{u}{X}\r) 
\ll X^{\frac 3 4 +\ve} \l(\frac X H \r)^{\frac 3 2}.
\ea
The choice of $C_0^{\star}$ is dictated by the above bound, since the optimal choice for $H$ is $\sqrt{X}$. 
 For larger values of $c$ we follow the same approach as above and we get the following sum after dyadic subdivision which is analogous to $R_1 (M,N,C,X)$:
\ba \label{g1}
R_3 (M,N,C, u)&:=\mathop{\sum_ {m }\sum_{n }}_{nl \leq  \frac {mu} X} b_1\l(\frac{m}{M}\r) b_2\l(\frac{n}{N}\r) \sum _ c \frac{S(nr_1,-m,c)}{c}e\l(-2 \sqrt{\frac{mnu}{c}}\r)
w \l(\frac{mc}{n}\frac u X\r) V\l(\frac{c}{C}\r) g^{\star}(m,n,c),
\ea
where $g^{\star}(m,n,c):=\l(c/mn\r)^{\frac 1 4}$. We have the proposition below which can be compared with Prop. \ref{kuz1}.
\begin{proposition}\label{kuz2}
Let us fix $u\in [X, X+H]$, where $\sqrt{X}\leq H\leq X$.  For fixed $M,N,$ and $C$ such that
\[
C_0^{\star}\leq  C \leq \frac{8NX}{M}, 1 \leq M \ll \frac{X^{1+\ve}}{H},\textnormal{ and } 1 \leq N \ll \frac{X^{1+\ve}}{lH},
\]
we have
\ba \nonumber
R_3 (M,N,C,X,u) \ll r^{\theta}X^{\ve} \l(X^{\frac{1}{4}}\l(\frac{X}{H}\r)^2 +X^{\frac 1 3}\l(X/H\r)^{\frac 3 2}\r).
\ea
\end{proposition}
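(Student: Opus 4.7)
The plan is to mirror the proof of Proposition \ref{kuz1}, which gives the analogous bound for $R_1(M,N,C,X)$, while tracking the three structural differences in $R_3$: (i) the weight factor is $g^{\star}(m,n,c) = (c/mn)^{1/4}$ rather than $(c/mn)^{3/4}$; (ii) the parameter $X$ in the phase $e(-2\sqrt{mnX/c})$ and in the smoothed weight $w(mc/n)$ is replaced by $u$; (iii) the summation condition $nl \leq m$ becomes $nl \leq mu/X$. Since $u \asymp X$ and the $u$-derivatives of $w(mcu/(nX))$ are uniformly controlled by \eqref{w-prime}, these changes affect only the numerology and not the analytic structure of the argument.

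First I would apply the Kuznetsov formula (Lemma \ref{kuz}) to the $c$-sum, after the change of variables $t = 4\pi\sqrt{mn}/c$ that casts the sum in the form required by Lemma \ref{kuz}. This replaces the Kloosterman sums by a spectral sum $\sum_j \rho_j(nr_1)\rho_j(m)\check{f}(\kappa_j)$ together with an Eisenstein contribution, where the test function $f$ packages the smooth cut-offs $V(c/C)$ and $w(mcu/(nX))$, the amplitude $g^{\star}$, and the oscillating phase $e(-2\sqrt{mnu/c})$. The Bessel transform $\check{f}(\kappa_j)$ is analyzed by combining the standard asymptotic expansion of $K_{2i\kappa_j}(t)$ with a stationary phase argument: in the range where a stationary point exists, Lemma \ref{BKY2} yields a main contribution of the shape $(mn)^{i\kappa_j}$ times an explicit smooth amplitude, while Lemma \ref{BKY1} shows that the contribution from the non-stationary range, as well as from $|\kappa_j|$ much larger than $\sqrt{mnu/C}$, is negligible.

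Next I would split the remaining spectral sum dyadically in $\kappa_j$ and apply a two-regime strategy. For moderate $\kappa_j$, apply the large sieve inequality for twisted coefficients (Lemma \ref{ls}) to the $(m,n)$ double sum, having first factored $\rho_j(nr_1)$ via \eqref{realtion} and used the Hecke multiplicativity \eqref{heckerelation} to bring out a factor $|\lambda_j(r_1)| \leq \tau(r_1) r_1^{\theta}$; the cancellation harvested from the average over the modulus is what yields the $X^{1/3}(X/H)^{3/2}$ term. In the complementary range, use Jutila's local spectral large sieve (Lemma \ref{jutila}) together with the Weyl law (Lemma \ref{weyl}), Lemma \ref{hec}, and Lemma \ref{hl}; this produces the $X^{1/4}(X/H)^2$ term. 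The Eisenstein contribution is handled in the standard way, using the convexity bound for $|\zeta(1+2i\eta)|^{-1}$ and the divisor bound for $\sigma_{2i\eta}$.

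The main obstacle, just as in the proof of Proposition \ref{kuz1}, will be executing the Bessel transform analysis uniformly across all the regimes of $M, N, C$ permitted by \eqref{dya} (with $C_0^{\star}$ in place of $C_0$): the location of the stationary point in $\kappa$, and its compatibility with the support constraints coming from $V(c/C)$, $b_1(m/M)$, $b_2(n/N)$, and the smoothed cut-off $w(mcu/(nX))$, must be tracked carefully to obtain the claimed exponents. Because $g^{\star}$ carries the exponent $1/4$ rather than $3/4$, the integrand is larger by a factor of roughly $(mn/c)^{1/2}$ in the critical range $c \asymp X/l$, $mn \asymp X^2/(l H^2)$, which is the source of the weakening of the bound relative to Proposition \ref{kuz1}. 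Combining the two spectral regimes and summing over $l \mid r$ gives the stated inequality; since the bound is uniform in $u \in [X, X+H]$, no further work is needed to integrate trivially against $w(u)/u^{1/4}$ in the outer $u$-integral of $R_{3,w}(X)$.
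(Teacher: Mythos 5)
Your proposal mirrors the paper's own sketch: the paper proves Proposition \ref{kuz2} by declaring it ``almost identical'' to Proposition \ref{kuz1} and then tracking exactly the same three changes you identify, namely $g^{\star}=(c/mn)^{1/4}$ in place of $(c/mn)^{3/4}$, the replacement of $X$ by $u\asymp X$ inside the phase and weight, and the shifted constraint $nl\leq mu/X$, with the change of exponent in the amplitude (and the lower truncation $C_0^{\star}=X^{2/3+\ve}/l$) propagating through the analogues of Propositions \ref{D+}, \ref{C+}, \ref{main-d-minus}, \ref{c>c1}, \ref{C>C0} to produce the two terms $X^{1/4}(X/H)^2$ and $X^{1/3}(X/H)^{3/2}$. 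This is essentially the same approach as the paper, down to the same lemmas and the same division into large-sieve and Weyl-law regimes.
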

We give  the details of the proof of Prop. \ref{kuz1} below and the proof of Prop. \ref{kuz2} 
is almost identical and we only give a very brief sketch of it at the end. 
\section{Application of The Kuznetsov formula} 
\subsection{Decomposition of $R_1 $}
Let us define
\be \label{f-function}
f(t) = f_{m,n,C}(t):= w\l(\frac{4\pi m \sqrt{mnr_1}}{nt}\r)V\l(\frac{4\pi \sqrt{mnr_1}}{tC}\r) g\l(m,n,\frac{4\pi \sqrt{mnr_1}}{t}\r)e\l(- \frac{ X^{\frac{1}{2}} t^{\frac{1}{2}}(mn)^{\frac{1}{4}} }{\sqrt{\pi}r_1^{1/4}}\r),
\ee 
where $g(m,n,c)$ is defined in \eqref{gmnc}. Applying the Kuznetsov formula (Lemma \ref{kuz}) to the  $c$-sum inside $R_1$, we obtain
\ba \nonumber
&R_1 = \Sigma _{\text{disc.}}+ \Sigma _{\text{cont.}} = \mathop{\sum_ {m }\sum_{n }}_{nl \leq m} b_1\l(\frac{m}{M}\r) b_2\l(\frac{n}{N}\r) \sum_{j = 1}^{\infty}\rho_j(nr_1)\rho_j(m)\check{f}_{m,n,C}(\kappa _ j )\\
&+ \mathop{\sum_ {m }\sum_{n }}_{nl \leq m} b_1\l(\frac{m}{M}\r) b_2\l(\frac{n}{N}\r) \frac{1}{\pi}\int\limits_{-\infty}^{\infty} (nmr_1)^{-i\eta}\sigma_{2i\eta}(nr_1)\sigma_{2i\eta}(m)\frac{\cosh(\pi \eta)\check{f}_{m,n,C}(\eta)}{\l|\zeta (1+2i\eta)\r|^2}\diff \eta , \nonumber
\ea
say, where $\check{f}(\eta)$ is defined in \eqref{fcheckkuz}.
From the support of $V$, the argument $t$ of $f(t)$ lies in the interval
\ba \label{lessthan1}
\frac{\pi \sqrt{mnr_1}}{C} < t \leq \frac{4\pi \sqrt{mnr_1}}{C} \leq \frac{16\pi\sqrt{MNr_1}}{C} \ll 1,
\ea
since $ C > C_0 $ (see \eqref{c_0}), $H \geq \sqrt{X}$ (see \eqref{H-size}) and $r=O(X^{1/3 })$ (see Remark \ref{1/3}). We expand $K_{2i\eta}(t)$ 
\[
K_{2i\eta}(t)
=\frac{\pi}{\sinh(2\pi\eta)}
\sum_{\nu=0}^{\infty}
\left[
\frac{\left(\frac{t}{2}\right)^{-2i\eta+2\nu}}{\Gamma(1-2i\eta)\,\nu!\,(1-2i\eta)_\nu}
-
\frac{\left(\frac{t}{2}\right)^{2i\eta+2\nu}}{\Gamma(1+2i\eta)\,\nu!\,(1+2i\eta)_\nu}
\right] = K_{2i\eta, +}(t)- K_{2i\eta, -}(t),
\]
say, where $(a)_n = a(a+1)(a+2)\cdots(a+n-1)$. Accordingly, we write 
\ba \nonumber
\Sigma_{\text{disc.}} = \mathcal{D}_{+}(M,N,C) - \mathcal{D}_{-}(M,N,C),\quad \text{and}\quad \Sigma_{\text{cont.}}= \mathcal{C}_{+}(M,N,C) - \mathcal{C}_{-}(M,N,C),
\ea
where
\ba \label{Dpm}
\mathcal{D}_{\pm} = \mathcal{D}_{\pm}(M,N,C) :=  \mathop{\sum_ {m }\sum_{n }}_{nl \leq m} b_1\l(\frac{m}{M}\r) b_2\l(\frac{n}{N}\r) \sum_{j = 1}^{\infty}\rho _j(nr_1)\rho _j(m) \check{f}_{m,n,C,\pm}(\kappa _j),
\ea
\ba \label{Cpm}
\mathcal{C}_{\pm} = \mathcal{C}_{\pm}(M,N,C) &:= \frac{1}{\pi} \mathop{\sum_ {m }\sum_{n }}_{nl \leq m} b_1\l(\frac{m}{M}\r) b_2\l(\frac{n}{N}\r) \int\limits_{-\infty}^{\infty} (nmr_1)^{-i\eta}\sigma_{2i\eta}(nr_1)\sigma_{2i\eta}(m)\frac{\cosh(\pi \eta)\check{f}_{\pm}(\eta)}{\l|\zeta (1+2i\eta)\r|^2}\diff \eta,
\ea
and $\check{f}_{\pm}(\eta) : = \frac{4}{\pi} \int\limits_ 0 ^{\infty} K_{2i\eta , \pm}(t) f(t) \frac{\diff t}{t}$. We decompose $R_1 $ as $R_1  = \mathcal{D}_{+} - \mathcal{D}_{-}+
\mathcal{C}_{+} - \mathcal{C}_{-}.$ We write 
\ba \label{fcheck}
\check{f}_{\pm}(\eta) & = \frac{4}{\sinh(2\pi \eta)} \sum_{\nu = 0}^{\infty} \frac{2^{-2\nu}}{\Gamma(1\mp 2i\eta)\nu !(1 \mp 2i\eta)\dots (\nu \mp 2i\eta)} I_{\pm}(m,n,C,\nu ,\eta, r_1),
\ea 
where by the change of variables $t = u^2\sqrt{r_1}$,
\be \label{various}
I_{\pm} = I_{\pm}(m,n,C,\nu ,\eta, r_1):= 2 r_1^{\nu \mp i\eta}\int A(u) e(-h_{\pm}(u)) \frac{\diff u}{u},
\ee 
where the weight function
\be \label{au}
A(u)= A_{m,n,\nu,C}(u):= \frac{u^{4 \nu}}{u} w\l(\frac{4\pi m\sqrt{mn}}{nu^2}\r)V\l(\frac{4\pi \sqrt{mn}}{u^2C}\r)g\l(m,n,\frac{4\pi \sqrt{mn}}{u^2}\r),  
\ee 
and the phase function
\be \label{phasefunction}
h_{\pm}(u):=   \frac{X^{\frac{1}{2}} u (mn)^{\frac{1}{4}}}{\sqrt{\pi}} \pm \frac{2 \eta \log u}{\pi} .  
\ee
Under the condition 
$nl \leq m $, we record  the following  bounds that will be required later:
\be \label{wei}
g\l(m,n,\frac{4\pi \sqrt{mn}}{u^2}\r) \ll \l(\frac{C}{N^2l}\r)^{\frac{3}{4}},
\text{ and }
\|A \|_{\infty} \ll \l(\frac{\sqrt{MN}}{C}\r)^{2\nu-1/2} \l(\frac{C}{N^2l}\r)^{\frac{3}{4}} . 
\ee

\subsection{Estimation of $\mathcal{D}_{+} $ and $\mathcal{C}_{+}$}

Here, the phase function $-h_{+}$ has no stationary point and so \eqref{k-der} and the Weyl law suffices. 
\begin{proposition}\label{fun}
We have, for any $\ve >0$
\ba \nonumber
 \check{f}_{+}(\eta) \ll 
\begin{cases} 
 \l(\frac{C}{N^2l}\r)^{\frac{3}{4}}\frac{1}{\eta ^{\frac{3}{2}}}e^{-\pi |\eta|}  & \text{if }  \eta > 0, \\
 X^{-100}, & \text{if } \eta \gg \frac{MC X^{\ve}}{NH} .
\end{cases}
\ea
\end{proposition}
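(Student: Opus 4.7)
The key observation is that for $\eta > 0$ the phase $h_+(u) = X^{1/2}u(mn)^{1/4}/\sqrt{\pi} + 2\eta\log u/\pi$ has no stationary point on the support of $A$: its derivative $h_+'(u) = X^{1/2}(mn)^{1/4}/\sqrt{\pi} + 2\eta/(\pi u)$ is a sum of two positive terms, so in particular $|h_+'(u)| \geq 2\eta/(\pi u)$, and since $h_+''(u) = -2\eta/(\pi u^2)$ keeps a constant sign, $h_+'$ is monotonic. On the $u$-support (of length $\asymp u \asymp (MN)^{1/4}/\sqrt{C}$), \eqref{wei} gives $\|A\|_\infty \ll u^{-1}(C/(N^2l))^{3/4}$ for $\nu = 0$, and a total-variation computation---where the contributions of $w'(T(u))\,T'(u)$ and $V'(s(u))\,s'(u)$ reduce by change of variables to the finite total variations of $w$ and $V$---yields $\int |A'(u)|\,du \ll u^{-1}(C/(N^2l))^{3/4}$ as well. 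For the outer prefactor, the identity $|\Gamma(1-2i\eta)|^2 = 2\pi\eta/\sinh(2\pi\eta)$ gives
\[
\left|\frac{4}{\sinh(2\pi\eta)\,\Gamma(1-2i\eta)}\right| = \frac{4}{\sqrt{2\pi\eta\,\sinh(2\pi\eta)}} \ll \frac{e^{-\pi\eta}}{\sqrt{\eta}}\ (\eta\geq 1), \qquad \ll \frac{1}{\eta}\ (0<\eta\leq 1).
\]

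For the first bound, the plan is to focus on the $\nu = 0$ term and apply Lemma \ref{Stein} with $k = 1$ (legitimate since $h_+'$ is monotonic), using $|h_+'|_{\min}\gtrsim \eta\sqrt{C}/(MN)^{1/4}$ together with the total-variation bound above. This yields $\bigl|\int A_0(u)\,e(-h_+(u))\,du\bigr| \ll (C/(N^2l))^{3/4}/\eta$. Multiplying by the prefactor gives the claimed $\ll (C/(N^2l))^{3/4}\,e^{-\pi\eta}/\eta^{3/2}$ for $\eta \geq 1$; for $0<\eta\leq 1$, the trivial estimate on the integral combined with the $\ll 1/\eta$ prefactor already yields $\ll (C/(N^2l))^{3/4}/\eta$, which beats the stated bound since $e^{-\pi\eta}\asymp 1$ and $\eta^{-1}\leq\eta^{-3/2}$ in this regime. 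The contribution of $\nu \geq 1$ to the outer series is controlled uniformly: the $\nu$-th summand gains $(u^2)^{2\nu}r_1^\nu \asymp (MNr_1/C^2)^\nu$ (from the $u^{4\nu}$ in $A_\nu$ and $r_1^\nu$ in $I_+$) while the denominator $|\nu!\prod_{k=1}^\nu (k-2i\eta)|\geq \nu!\max(\nu!, (2\eta)^\nu)$ provides extra decay, and since \eqref{lessthan1} yields $MNr_1/C^2\ll 1$, the series converges geometrically.

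For the rapid-decay bound, the plan is to apply Lemma \ref{BKY1} to the bulk part of $A_0$ (where the smooth cutoffs $w, V$ are constant): with parameter choices $T\sim u^{-1}(C/(N^2l))^{3/4}$, $U\sim u$, $Y=\eta$, $Q=u$, $R = \eta/u$, the lemma yields
\[
\Bigl|\int_{\mathrm{bulk}} A_0(u)\,e(-h_+(u))\,du\Bigr| \ll_A (C/(N^2l))^{3/4}\,\eta^{-A/2}
\]
for any $A \geq 1$. Under $\eta \gg MCX^\varepsilon/(NH)$ and the dyadic constraints \eqref{dya}, choosing $A = A(\varepsilon)$ large enough makes this smaller than any fixed negative power of $X$, in particular $\ll X^{-100}$. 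The buffer regions (where $w'$ or $V'$ are supported) are narrow in $u$-measure and admit an analogous treatment via Lemma \ref{BKY1} after adjusting the parameter $U$ to match the buffer scale.

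The main technical obstacle is handling the buffer regions uniformly. On a $u$-buffer where $w^{(j)}$ acts, one has $|A_0^{(j)}|\sim T\cdot (H_u)^{-j}$ rather than $T\cdot u^{-j}$, where $H_u$ is the $u$-length of the buffer (obtained from $|dT/du|=2T/u$). One must therefore re-run the BKY analysis with $U\sim H_u$ and check that $(QR/\sqrt{Y})^{-A}+(RU)^{-A}$ still decays rapidly enough under the hypothesis on $\eta$, and then combine the bulk and buffer bounds. Uniformity in $\nu$ is then routine, since $A_\nu = u^{4\nu} A_0$ merely contributes the factor $(MN/C^2)^\nu$ absorbed by the geometric $\nu$-sum.
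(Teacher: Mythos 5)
Your proposal is correct and follows essentially the same route as the paper's proof: the first derivative bound (Lemma \ref{Stein} with $k=1$) on the oscillatory integral for the $\eta^{-3/2}e^{-\pi\eta}$ estimate, a Stirling/reflection-formula bound on the $\Gamma$ and $\sinh$ prefactor, absolute convergence of the $\nu$-series via $\sqrt{MNr_1}/C\ll 1$, and Lemma \ref{BKY1} for the negligible range $\eta\gg MCX^\ve/(NH)$. The paper is considerably terser than your write-up; your version spells out the Gamma identity, the $\nu$-sum control, and the $w$-buffer subtlety in applying Lemma \ref{BKY1} (which the paper does not mention at all), and your use of the trivial integral bound for $0<\eta\leq 1$ is in fact cleaner than the paper's literal argument, which combined with the first-derivative bound would only give $\eta^{-2}$ there.
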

\begin{proof} We have $ h'_{+}(u)  \geq 2 \eta /\pi u \gg \eta/\l(\sqrt{MN}/C\r)^{1/2}$.
For $\eta  \gg \frac{MC X^{\ve}}{NH}$, by Lemma \ref{BKY1} that
 the integral $I_{+}$ is negligibly small. For $\eta >0$. We apply \eqref{k-der} with $k=1$ and the Stirling asymptotics for the Gamma function. The proposition follows.
\end{proof}
\begin{lemma}\label{Rankin-Selberg}
For any integer $q\geq 1$, the following bound holds. 
\be\label{RS}
\sum_{ 1 \leq n \leq N }\frac{|\rho_j(nq)|^2}{\cosh(\pi \kappa_j)}  \ll q^{\theta+\ve} N(N\kappa_j)^{\ve}.
\ee
\end{lemma}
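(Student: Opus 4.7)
The plan is to reduce the claim to an estimate on $\sum_{n\leq N}|\lambda_j(nq)|^2$ via \eqref{realtion} and Lemma~\ref{hl}, then use Hecke multiplicativity to separate the $q$-dependence from a standard Rankin--Selberg type average over $n$ coprime to $q$.

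First, by \eqref{realtion} I write $\rho_j(nq) = \rho_j(1)\lambda_j(nq)$. Pulling out the common prefactor and applying Lemma~\ref{hl} gives $|\rho_j(1)|^2/\cosh(\pi\kappa_j) \ll \kappa_j^{\ve}$, so the claim reduces to
\[
\sum_{n\leq N}|\lambda_j(nq)|^2 \ll q^{\theta+\ve}\,N(N\kappa_j)^{\ve}.
\]
Next, for each $n\leq N$ I decompose uniquely as $n = n_1 n_2$ with $n_1$ supported only on primes dividing $q$ and $(n_2,q)=1$. Multiplicativity \eqref{heckerelation} then gives $\lambda_j(nq)=\lambda_j(n_1 q)\lambda_j(n_2)$, and the sum factors as
\[
\sum_{n\leq N}|\lambda_j(nq)|^2 \;=\; \sum_{\substack{n_1\mid q^{\infty}\\ n_1\leq N}}|\lambda_j(n_1 q)|^2\sum_{\substack{n_2\leq N/n_1\\ (n_2,q)=1}}|\lambda_j(n_2)|^2.
\]
Dropping the coprimality condition by positivity and applying Lemma~\ref{hec} to the inner sum yields a bound of $\ll \kappa_j^{\ve}(N/n_1)$.

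It remains to handle the outer $n_1$-sum. Here I would invoke the pointwise Hecke bound $|\lambda_j(m)|\leq \tau(m)\,m^{\theta}$, which follows from iterating $H(\theta)$ and \eqref{heckerelation} along prime powers, together with the Euler-product structure: writing $q=\prod_p p^{a_p}$, the restriction $n_1\mid q^{\infty}$ gives
\[
\sum_{n_1\mid q^{\infty}}\frac{|\lambda_j(n_1 q)|^2}{n_1} \;\leq\; \prod_{p\mid q}\sum_{k\geq 0}\frac{|\lambda_j(p^{k+a_p})|^2}{p^k},
\]
and each local factor is analysed via the Satake parameters at $p$. The denominators $1-p^{2\theta-1}$ that appear upon summing the geometric tail are bounded since $2\theta<1$ by Kim--Sarnak (see Remark~\ref{kim}), and the product over the $O(\log q/\log\log q)$ primes dividing $q$ is absorbed into a factor $q^{\ve}$. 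Assembling the estimates then yields the stated bound.

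The main technical obstacle is the local Rankin--Selberg analysis at the ramified primes: a naive use of the pointwise Hecke bound $|\lambda_j(n_1 q)|\leq \tau(n_1 q)(n_1 q)^{\theta}$ produces an exponent of $q^{2\theta+\ve}$, and sharpening this to the claimed $q^{\theta+\ve}$ requires extracting cancellation from the local factor $\sum_{k\geq 0}|\lambda_j(p^{k+a_p})|^2/p^k$ by expanding in Satake parameters with the relation $\alpha_p\beta_p=1$. Once this local sharpening is established, the remaining steps (assembling the Euler product and applying the divisor bound $\tau(q)\ll q^{\ve}$) are routine.
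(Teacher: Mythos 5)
Your $n=n_1n_2$ decomposition ($n_1\mid q^{\infty}$, $(n_2,q)=1$) is valid, but it is a genuinely different route from the paper's. The paper instead Möbius-inverts \eqref{heckerelation} to write $\lambda_j(nq)=\sum_{k\mid(n,q)}\mu(k)\lambda_j(q/k)\lambda_j(n/k)$, pulls out $\lvert\lambda_j(q/k)\rvert\ll q^{\theta+\ve}$ via Remark~\ref{kim}, swaps the $k$- and $n$-sums, and finishes with Lemma~\ref{hec}; this sidesteps the Euler-product bookkeeping entirely and is essentially a one-line argument.

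You are, however, right to be uneasy about the exponent, and your unease points at a real issue that your proposed fix cannot resolve. Your computation honestly delivers $q^{2\theta+\ve}$, and so does the paper's argument when read carefully: since $\lvert\rho_j(nq)\rvert^2=\lvert\rho_j(1)\rvert^2\lvert\lambda_j(nq)\rvert^2$, the Möbius sum for $\lambda_j(nq)$ must be squared, so the factor $\lambda_j(q/k)$ enters twice. The ``local sharpening via Satake parameters'' you hope for does not exist: with $\alpha_p\beta_p=1$ and $\lvert\alpha_p\rvert$ potentially as large as $p^{\theta}$, one has $\lvert\lambda_j(p^{a_p})\rvert\asymp p^{a_p\theta}$, so already the $k=0$ term of your local factor $\sum_{k\ge 0}\lvert\lambda_j(p^{k+a_p})\rvert^2p^{-k}$ has size $p^{2a_p\theta}$, and the product over $p\mid q$ is then $\gg q^{2\theta}$. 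The loss of the square is intrinsic, not an artifact of either method. You should stop at $q^{2\theta+\ve}$: that is what both arguments actually prove, and it suffices everywhere the lemma is used, since in the paper $q=r_1\ll X^{1/3}$ and $\theta\le 7/64$, so the difference between $q^{\theta}$ and $q^{2\theta}$ is absorbed by $X^{\ve}$.
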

\begin{proof}
By \eqref{heckerelation}, Remark  \ref{kim} and Lemma \ref{hec}, we have
\ba \label{N}
\sum_{1 \leq n \leq N }\frac{|\rho_j(nq)|^2}{\cosh(\pi \kappa_j)} 
  \ll \kappa_j^{\ve} \sum_{ 1 \leq n \leq N }\l|\sum_{k|(n,q)}\mu(k)\lambda _j\l(\frac{q}{k}\r)\lambda _j\l(\frac{n}{k}\r) \r| 
\ll q^{\theta+\ve} N(N\kappa_j)^{\ve} \nonumber
\ea
\end{proof}
\begin{proposition}\label{discrete}
Let  $1 \ll K_1,K_2 \ll X^2$, and $\lambda \geq 0$. For any fixed $c_1, c_2>0$ and any positive integer $q$, we have,
\ba \nonumber
\Theta_{c_1,c_2}(K_1,K_2,\lambda)
&:=\sum_{\substack{ c_1K_1 \leq \kappa_j \leq c_2K_2  }}  \mathop{\sum_ {m \sim M} \sum_ {n \sim N}} | \rho_j(nq)\rho_j(m)|\frac{e^{-\pi |\kappa_j|}}{\kappa _j ^{\lambda}} \ll
\begin{cases}
 q^{\theta} MN K_2^{2-\lambda} & \text{if}\quad \lambda \leq 2\\
q^{\theta} MN & \text{if}\quad \lambda > 2 .
\end{cases}
\ea
\end{proposition}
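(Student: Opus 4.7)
The plan is to decouple the $m$-- and $n$--sums by Cauchy--Schwarz, absorb the resulting $L^2$--averages of Fourier coefficients via Lemma \ref{Rankin-Selberg}, and finally sum over the spectrum using Weyl's law in dyadic blocks.

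First, since the summand factorizes in $m$ and $n$, I would write
\[
\sum_m \sum_n b_1(m/M)b_2(n/N)|\rho_j(m)\rho_j(nq)|
= \Bigl(\sum_m b_1(m/M)|\rho_j(m)|\Bigr)\Bigl(\sum_n b_2(n/N)|\rho_j(nq)|\Bigr).
\]
Cauchy--Schwarz applied to each factor, together with $\sum_m b_1(m/M)\ll M$ (and similarly for $n$), reduces the estimation to the square--root of the second moments $\sum_{m\ll M}|\rho_j(m)|^2$ and $\sum_{n\ll N}|\rho_j(nq)|^2$. By Lemma \ref{Rankin-Selberg} (the case $q=1$ for the $m$--sum, and the full statement for the $n$--sum), both of these are bounded by $q^{\theta+\varepsilon}MN(MN\kappa_j)^{\varepsilon}\cosh(\pi\kappa_j)$ in total. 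Hence
\[
\sum_m\sum_n b_1(m/M)b_2(n/N)|\rho_j(m)\rho_j(nq)|
\ll q^{\theta+\varepsilon}\,MN\,(MN\kappa_j)^{\varepsilon}\cosh(\pi\kappa_j).
\]

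Next, I would insert this pointwise bound into the definition of $\Theta_{c_1,c_2}(M,N,K_1,K_2,\lambda)$ and use the elementary identity $\cosh(\pi\kappa_j)\,e^{-\pi\kappa_j}=\tfrac12(1+e^{-2\pi\kappa_j})\ll 1$, valid because we may assume $\kappa_j>0$. This yields
\[
\Theta_{c_1,c_2}(M,N,K_1,K_2,\lambda)
\ll q^{\theta+\varepsilon}(MN)^{1+\varepsilon}\sum_{c_1K_1\le \kappa_j\le c_2K_2}\frac{1}{\kappa_j^{\lambda-\varepsilon}}.
\]
The remaining spectral sum is controlled by Weyl's law (Lemma \ref{weyl}): slicing the range $[c_1K_1,c_2K_2]$ into dyadic pieces $[K,2K]$ of eigenvalue count $\ll K^{2+\varepsilon}$ gives a contribution $\ll K^{2-\lambda+\varepsilon}$ from each. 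Summing over dyadic $K\le K_2$ yields $\ll K_2^{2-\lambda+\varepsilon}$ when $0\le\lambda\le 2$ (the largest scale dominates, with an extra $\log$ at $\lambda=2$ absorbed into $\varepsilon$), and $\ll 1$ when $\lambda>2$ (convergence, recalling the lower bound $|\kappa_j|>3.815$ from the Preliminaries).

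There is no real obstacle: the one point to watch is that the Rankin--Selberg--type bound in Lemma \ref{Rankin-Selberg} produces $\cosh(\pi\kappa_j)$ on the right--hand side, which must be cancelled against the Gaussian factor $e^{-\pi|\kappa_j|}$ present in $\Theta_{c_1,c_2}$; this cancellation is precisely what makes the counting by Weyl's law effective. A minor cosmetic point is that the Cauchy--Schwarz step produces $q^{\theta/2+\varepsilon}$ rather than $q^{\theta+\varepsilon}$, which is even stronger than what is claimed, so the stated bound follows a fortiori.
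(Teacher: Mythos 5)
Your proposal is correct and follows essentially the same approach as the paper: factor the double sum in $m,n$, apply Cauchy--Schwarz to reduce to second moments, invoke Lemma \ref{Rankin-Selberg}, cancel $\cosh(\pi\kappa_j)$ against $e^{-\pi\kappa_j}$, and count eigenvalues in dyadic blocks via Weyl's law. Your closing observation that Cauchy--Schwarz actually yields the sharper factor $q^{\theta/2+\ve}$ is accurate; the paper simply records the weaker $q^{\theta+\ve}$, which suffices for its applications.
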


\begin{proof}
 We subdivide the $j$-sum into dyadic pieces with $\kappa_j \asymp K$ such that $c_1K_1 \leq K \leq c_2K_2$, and apply the Cauchy-Schwarz inequality to both the sums over $m$ and $n$. After that we apply \eqref{RS} to sums over $m$ and $n$ and later the Weyl law to the $\kappa_j$-sum.
\end{proof}
\begin{proposition}\label{prop4} For $1 < \lambda \leq 2 $, we have
	\ba \nonumber
	\Theta(\lambda) &:= \mathop{\sum_ m \sum_ n}_{nl \leq m} b_1 \l(\frac{m}{M}\r)  b_2 \l(\frac{n}{N}\r) \int\limits _{0}^{ \infty}(nr_1m)^{-i\eta}\sigma_{2i\eta}(nr_1)\sigma_{2i\eta}(m)\frac{e^{-\pi|\eta|}\cosh(\pi \eta)}{\eta^{\lambda}\l|\zeta (1 + 2i\eta)\r|^2}\diff \eta \ll  MNX^{\ve}. 
	\ea
\end{proposition}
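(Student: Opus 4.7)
The plan is to extract the bounds term by term from the integrand, so that the $m$ and $n$ sums become essentially a trivial double sum of size $MN$ with an $\varepsilon$-power loss, and the $\eta$-integral is shown to be $O(1)$. First I would pull absolute values inside everything, noting that $|(nr_1m)^{-i\eta}| = 1$, that $|\sigma_{2i\eta}(k)| \leq \sigma_0(k) = \tau(k) \ll k^{\varepsilon}$ for any positive integer $k$, and that $e^{-\pi|\eta|}\cosh(\pi\eta) \leq 1$ uniformly in $\eta$. After these trivial majorizations, the $m$- and $n$-sums decouple from the $\eta$-integral (the coprimality/ordering condition $nl \leq m$ is dropped by positivity) and contribute at most
\[
\sum_{m \sim M}\sum_{n \sim N} (mn)^{\varepsilon}(r_1)^{\varepsilon} \ll MN\, X^{\varepsilon},
\]
since $r_1 \leq |r| \ll X^{1/3}$.

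It remains to show that
\[
\mathcal{J}(\lambda) := \int_0^{\infty} \frac{d\eta}{\eta^{\lambda}|\zeta(1 + 2i\eta)|^2} \ll 1
\]
for $1 < \lambda \leq 2$. This integral splits naturally into the ranges $\eta \in (0,1]$ and $\eta \in [1,\infty)$. For the tail, the classical estimate $|\zeta(1+it)| \gg 1/\log(|t|+2)$ (see, e.g., Titchmarsh) gives $1/|\zeta(1+2i\eta)|^2 \ll \log^2(\eta+2)$, and hence the contribution to $\mathcal{J}(\lambda)$ is dominated by $\int_1^\infty \log^2(\eta+2)\,\eta^{-\lambda}\,d\eta$, which converges since $\lambda > 1$. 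Near the origin, one exploits the pole of $\zeta$ at $s=1$: the Laurent expansion $\zeta(1+2i\eta) = (2i\eta)^{-1} + O(1)$ yields $|\zeta(1+2i\eta)|^2 \gg \eta^{-2}$ for $|\eta| \leq 1$, so $1/|\zeta(1+2i\eta)|^2 \ll \eta^2$ in that range. The local contribution is therefore dominated by $\int_0^1 \eta^{2-\lambda}\, d\eta$, which is finite for any $\lambda < 3$, in particular for $\lambda \leq 2$. Combining the two ranges gives $\mathcal{J}(\lambda) \ll 1$ uniformly in the allowed range of $\lambda$.

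The cancellation of the potentially divergent factor $\eta^{-\lambda}$ near $\eta = 0$ against the zero of $1/|\zeta(1+2i\eta)|^2$ is the only subtle point; everything else is a clean trivial estimate. Multiplying $\mathcal{J}(\lambda) \ll 1$ by the divisor-bound estimate for the $m,n$ sums produces the claimed bound $\Theta(M,N,K,\lambda) \ll MN\, X^{\varepsilon}$ (with $K$ playing no role, as it does not appear in the integrand). I do not anticipate any genuine obstacle here; the proposition is essentially an absolute-value estimate, in contrast with the discrete analogue (Proposition \ref{discrete}) where one needed the Weyl law and Rankin--Selberg input to handle the spectral average.
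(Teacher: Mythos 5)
Your argument is correct and is essentially the paper's own proof: both trivially bound $|(nr_1m)^{-i\eta}|$, $|\sigma_{2i\eta}(\cdot)|$, and $e^{-\pi|\eta|}\cosh(\pi\eta)$, decouple the $(m,n)$-sums to get $MNX^\ve$, and then show the $\eta$-integral is $O(1)$ by splitting at a fixed point and using the Laurent expansion of $\zeta$ at $s=1$ near the origin together with a standard lower bound on $|\zeta(1+it)|$ in the tail. The only cosmetic differences are the split point ($1$ vs.\ $\ve$) and the exact form of the tail bound on $1/|\zeta(1+2i\eta)|$.
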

\begin{proof}
	We break the integral into two parts $\l[\int_0^{\ve}+\int_{\ve}^ {\infty}\r]$ and the proposition follows from \cite[(2.1.16)]{T} that for $0 < \eta <\ve $ and from \cite[(3.6.5)]{T}, for $\eta \geq \ve$.
\end{proof}
\begin{proposition} \label{D+}
We have,
\ba \nonumber
\mathcal{D}_{+} \ll r_1^{\theta} \l(\frac{C}{N^2l}\r)^{\frac{3}{4}}MN  \l( \frac{MCX^{\ve}}{NH}\r)^{\frac{1}{2}}, \textnormal{ and } \mathcal{C}_{+}  \ll \l(\frac{C}{N^2l}\r)^{\frac{3}{4}} MNX^{\ve}.
\ea
\end{proposition}
\begin{proof}
Using \eqref{Dpm}, Prop. \ref{fun},  Prop. \ref{discrete}, and \eqref{dya}, we easily get the bound for $\mathcal{D}_{+}$. The bound for $\mathcal{C}_+$ is clear from Prop. \ref{fun}, \eqref{Cpm} and Prop. \ref{prop4}. 
\end{proof}

\section{Estimation of $\mathcal{D}_{-}$ for $C_0\leq C\leq C_1$}
The sum $\mathcal{D}_{-}$ is considerably more difficult to estimate than $\mathcal{D}_{+}$ because the phase function $-h_{-} (u)$ (see \eqref{phasefunction}) can have a stationary point inside the range of the integral $I_{-}$ which is given by $u_0$. We have, 
\ba \label{h''u}
h'_-(u) = \frac{X^{\frac{1}{2}} (mn)^{\frac{1}{4}}}{\sqrt{\pi}} - \frac{2 \eta}{u \pi} , \,
h''_-(u) = \frac{2\eta}{\pi u^2},\, \text{and} \, u_0=u_0(m,n,X,\eta) := \frac{2\eta}{\sqrt{\pi} X^{\frac{1}{2}}  (mn)^{\frac{1}{4}} }.
\ea 
\subsection{Further truncation of the $c$-sum}\label{c1section}
We shall consider two cases, depending on whether or not 
the factor $w(\frac{4\pi m \sqrt{mn}}{n u^2})$ of $A(u)$ (see \eqref{au}) takes the value $1$. 
Let us define
\be \label{c_1}
C_1 := \max \l\{2^j : 2^j \leq \frac{XN}{100M} \r\}. 
\ee
From \eqref{w1} and the support of $V$, we analyze the following integral for $C_0 < C \leq C_1$.
\[
I_{-}= 2 r_1^{\nu + i\eta}\int A(u) e(- h_{-}(u)) \diff u, \textnormal{ where }
A(u)=  \frac{u^{4 \nu }}{u} V\l(\frac{4\pi \sqrt{mn}}{u^2 C}\r)g\l(m,n,\frac{4\pi \sqrt{mn}}{u^2}\r).
 \]
\begin{proposition}\label{main-d-minus}
For $C_0 \leq C \leq C_1$, we have the bound
\ba \nonumber
\mathcal{D}_{-} & \ll  r^{\theta} X^{\frac{3}{4}+\ve} \sqrt{MN} \bigg (\l(\frac{C}{Xl}\r)^{\frac{1}{4}}\frac{1}{\sqrt{N}}+ \l(\frac{C}{Xl}\r)^{\frac{3}{4}}\frac{(M+N)}{N^{\frac{3}{2}}}+ \l(\frac{C}{Xl}\r)^{\frac{5}{4}}\frac{(MN)}{N^{\frac{5}{2}}} +N \bigg ).
\ea
\end{proposition}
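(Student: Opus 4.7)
The plan is to apply Lemma \ref{BKY2} to extract an asymptotic expansion of $I_-(m,n,C,\nu,\eta,r_1)$ whose leading phase is a twist by $(mn)^{-i\kappa_j}$, and then to feed this twist into the large sieve for twisted coefficients (Lemma \ref{ls}).

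\textbf{Stationary-phase analysis.} Since $C\leq C_1$, the definition \eqref{c_1} forces $w\bigl(4\pi m\sqrt{mn}/(nu^{2})\bigr)\equiv 1$ on the support of $V(4\pi\sqrt{mn}/(u^{2}C))$, so in this range $A(u)$ in \eqref{au} is a genuinely smooth bump supported at $u\asymp (mn)^{1/4}/\sqrt{C}$. The phase $-h_-$ has a unique stationary point $u_0$ given by \eqref{u0}, with $h_-''(u_0)=\pi X(mn)^{1/2}/(2\eta)$, and $u_0$ lies in the support of $V$ precisely when $\eta\asymp K_*:=\sqrt{MNX/C}$; outside a dyadic enlargement of that range Lemma \ref{BKY1} gives a negligible contribution. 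Inside it the hypotheses of Lemma \ref{BKY2} are met with $Q\asymp V\asymp u_0$ and $Y\asymp\eta$, giving $Z\asymp\eta\gg (MC/NH)X^{\ve}$ as required by \eqref{bound}. A direct computation from \eqref{phasefunction} and \eqref{u0} yields
\ba
e(-h_-(u_0))=e(-2\eta/\pi)\cdot u_0^{4i\eta}=\Psi(\eta,X)(mn)^{-i\eta},\nonumber
\ea
where $\Psi$ is unimodular and independent of $m,n$. The expansion \eqref{BKY3}–\eqref{derivative} thus takes the form
\ba
I_-(m,n,C,\nu,\eta,r_1)=r_1^{\nu+i\eta}\Psi(\eta,X)(mn)^{-i\eta}\sum_{\nu'\leq A\delta^{-1}}\Phi_{\nu'}(mn;\eta)+O(X^{-100}),\nonumber
\ea
with $\Phi_{\nu'}\ll (C/(Xl))^{(1+2\nu')/4}(C/N^{2}l)^{3/4}(MN)^{1/4}N^{-\nu'}$, the factor $N^{-\nu'}$ coming from the derivative gain in \eqref{derivative} and $(XC)^{-1/4}$ from $1/\sqrt{h_-''(u_0)}$. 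Crucially, each $\Phi_{\nu'}$ depends on $(m,n)$ only through the product $mn$ since $A$ and $h_-$ only see $mn$.

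\textbf{Applying the large sieve.} Substituting into \eqref{Dpm} and using Stirling $|\Gamma(1+2i\eta)\sinh(2\pi\eta)|^{-1}\ll \eta^{-1/2}e^{-\pi\eta}$, the main $\nu'=0$ contribution becomes
\ba
\sum_{\kappa_j\ll K_*X^{\ve}}\frac{|\rho_j(1)|^{2}\Psi(\kappa_j,X)}{\sinh(2\pi\kappa_j)|\Gamma(1+2i\kappa_j)|}\mathop{\sum\sum}_{\substack{m\sim M,\,n\sim N\\ nl\leq m}}b_1(m/M)b_2(n/N)\lambda_j(m)\lambda_j(nr_1)\Phi_0(mn;\kappa_j)(mn)^{-i\kappa_j}.\nonumber
\ea
Expanding $\lambda_j(nr_1)=\sum_{d\mid(n,r_1)}\mu(d)\lambda_j(n/d)\lambda_j(r_1/d)$ and bounding $|\lambda_j(r_1/d)|\ll(r_1/d)^{\theta}$ by Remark \ref{kim} extracts a factor $r_1^{\theta+\ve}$. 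Since $\Phi_0$ depends on $m,n$ only through $mn$, a Mellin inversion $\Phi_0(z;\kappa)=\tfrac{1}{2\pi i}\int_{(\sigma)}\widetilde\Phi_0(s;\kappa)z^{-s}\,ds$, with $\widetilde\Phi_0$ decaying rapidly in $\Im s$, separates the $m$ and $n$ variables. Cauchy–Schwarz in $j$ followed by two applications of Lemma \ref{ls} (the shift by $\Im s$ being harmless since Lemma \ref{ls} is uniform in such a twist) then produces a bound of the shape $(K_*^{2}+M^{2})^{1/2}(K_*^{2}+N^{2})^{1/2}\sqrt{MN}\,\|\Phi_0\|_{\infty}$. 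Since $C\leq C_1=XN/(100M)$ one checks $K_*\geq M,N$, so this collapses to $K_*^{2}\sqrt{MN}\|\Phi_0\|_{\infty}\asymp r_1^{\theta}X^{3/4+\ve}\sqrt{MN}(C/Xl)^{1/4}/\sqrt{N}$. The $\nu'=1,2$ terms are identical and yield the $(C/Xl)^{3/4}$ and $(C/Xl)^{5/4}$ contributions owing to the $N^{-\nu'}$ gain in $\Phi_{\nu'}$.

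\textbf{Complementary ranges and error terms.} The very small spectrum $\kappa_j\ll 1$ (where BKY no longer applies) and the transitional regime $K_*\asymp 1$ are dealt with by a trivial bound on $I_-$ using \eqref{wei} and Jutila's local spectral large sieve (Lemma \ref{jutila}), producing the terms $r_1^{\theta}X^{\ve}MN((C/N^{2}l)^{3/4}+1)$ and $r^{\theta}X^{3/4+\ve}N$ respectively. The tail of the $\nu$-series in \eqref{fcheck} is absorbed by the rapid decay of $2^{-2\nu}/\nu!$ together with \eqref{gamma}. The principal obstacle will be the separation of variables $m,n$ inside the smooth amplitude $\Phi_{\nu'}$ via Mellin inversion, and the careful bookkeeping required to track the interplay between $|\rho_j(1)|^{2}/\cosh(\pi\kappa_j)$ (via Lemma \ref{hl}), the Stirling decay of the Gamma factor, and the three size regimes of $\Phi_{\nu'}$ so that the three $(C/Xl)^{(1+2\nu')/4}$ contributions emerge with the precise powers of $M$ and $N$ stated.
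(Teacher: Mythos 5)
Your high-level plan matches the paper's: apply Lemma~\ref{BKY2} to extract the $(mn)^{-i\kappa_j}$ twist from the stationary phase, separate $m$ and $n$ by Mellin inversion, and then apply Cauchy--Schwarz together with the large sieve for twisted coefficients (Lemma~\ref{ls}). However, there is a concrete gap and two misattributions that would derail the argument if carried out.

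The gap: you never remove the coupling $nl\leq m$ between the two sums. This constraint is exactly what prevents you from factoring the double sum into a product of a sum over $m$ and a sum over $n$ --- Mellin inversion separates the variables inside $\Phi_0(mn;\kappa)$ and $V$, but it does nothing about $\mathbf{1}_{nl\leq m}$. Without that separation the large sieve (Lemma~\ref{ls}) cannot be applied. The paper replaces $\mathbf{1}_{nl\leq m}$ by a contour integral via the Perron formula (Lemma~\ref{perron}), which is the source of the error terms $E_1,E_2,E_3$. In particular the term $r^{\theta}X^{3/4+\ve}N$ in the stated bound is precisely the contribution of the diagonal $m=nl$ (the term $E_2$) coming from the Perron formula --- it is not a ``transitional regime'' of the spectral parameter as you suggest.

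The misattributions: the three terms $(C/Xl)^{1/4}$, $(C/Xl)^{3/4}$, $(C/Xl)^{5/4}$ do not arise from $\nu'=0,1,2$ in the BKY expansion. They all come from the leading term $\mathcal{D}_{-,0}$, specifically from expanding the large-sieve factor $(K^{2}+M^{2})^{1/2}(K^{2}+N^{2})^{1/2}/K^{5/2}$ into its $K^{2}$, $K(M+N)$, and $MN$ pieces and then substituting the lower bound $K\gg N(Xl/C)^{1/2}$ coming from the support of $V$. The higher-order BKY terms ($k\geq1$) are not identical in shape to the leading term; a trivial estimate suffices for them and they contribute $r_1^{\theta}X^{\ve}MN(C/N^{2}l)^{3/4}$, while the BKY remainder (truncation at $A$) gives $r_1^{\theta}X^{\ve}MN$ --- these are the source of the last group of terms in the proposition, not the small-$\kappa_j$ spectrum as you claim.
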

Unlike the previous
case of $\mathcal{D}_{+}$, an application of the Weyl law is not sufficient here. We need to obtain an explicit  asymptotic expansion of the integral $I_{-}$ by applying Lemma \ref{BKY2}.  This is essential because if we estimate $I_{-}$ then our final bound will not be satisfactory. We will see that it is enough to consider the leading term $\mathcal{L}$ of this asymptotic expansion (see \eqref{lt}.
The  phase $e(-h(u_0))$ occurring in $\mathcal{L}$ simplifies to the term $(mn)^{-i\kappa_j}$ up to multiplication by a 
 non-oscillatory function that does not involve $m$ and $n$. We  take advantage of  the cancellations in the sums over $m$ and $n$ arising out of this term. However, we need to make the variables $m$ and $n$  free and separate first. This is achieved by applying the Perron formula to remove the condition $nl\leq m$  and then by Mellin inversion to free them from the function $V$. Finally, once the two variables are separated, we apply the Cauchy-Schwarz inequality and the the Large Sieve  for  twisted 
Fourier coefficients $\rho_j (n)n^{-i\kappa_j}$ (\ref{ls}). 
\subsection{Beginning of the proof of Proposition \ref{main-d-minus} }
By Lemma \ref{BKY2}, we have the expansion
\be \label{ex}
I_{-} = \frac{2r_1^{\nu  + i\eta}}{\sqrt{h''_-(u_0)}}e\l(-h_-(u_0) - \frac{1}{8}\r) \sum\limits_{k \leq 3\delta^{-1}A} p_k(u_0)+ O_{A,\delta}\l({\eta}^{-A}\r),\ee where
\be \label{p0u0}
p_k(u_0) = \frac{1}{k!}
\l(\frac{i}{4\pi h''_-(u_0)}\r)^k G^{(2k)}(u_0), \, G(u) = A(u)e(-H(u)),\, H(u)= h_-(u)- h_-(u_0)-\frac{1}{2} h''_-(u_0)(u-u_0)^2. 
\ee 
where $A$ and $\delta$ are arbitrary subject to the conditions $A>0$ and $0<\delta<1/10$. We define
\be \label{pzeroA}
G(u) = A(u)e(-H(u)),\quad \text{where}\quad H(u)= h_-(u)- h_-(u_0)-\frac{1}{2} h''_-(u_0)(u-u_0)^2. 
\ee
Now we write $\mathcal{D}_-$ explicitly using \eqref{Dpm}, \eqref{fcheck} and \eqref{ex} $
\mathcal{D}_-= \sum\limits_{k \leq 3\delta^{-1}A} \mathcal{D}_{-,k} + O\l(\mathcal{D}_{-, Z}\r),$
where
\ba \label{D_k}
\mathcal{D}_{-,k}& := 4\mathop{\sum_ {m }\sum_{n }}_{nl \leq m} b_1\l(\frac{m}{M}\r) b_2\l(\frac{n}{N}\r) \sum_{j = 1}^{\infty}\rho _j(nr_1)\rho _j(m)\frac{1}{\sinh(2\pi \kappa_j)\Gamma(1+ 2i\kappa_j)} \\
&\times\sum_{\nu = 0}^{\infty} \frac{2^{-2\nu}}{\nu !(1 + 2i\kappa_j)\dots (\nu + 2i\kappa_j)}\l[\frac{2r_1^{\nu  + i\eta}}{\sqrt{h''_-(u_0)}}e\l(-h_-(u_0) - \frac{1}{8}\r) p_k(u_0)\r],
\ea
and  
\ba \label{D_Z}
\mathcal{D}_{-,Z} &:= 4\mathop{\sum_ {m }\sum_{n }}_{nl \leq m} b_1\l(\frac{m}{M}\r) b_2\l(\frac{n}{N}\r) \sum_{j = 1}^{\infty}\l|\frac{\rho _j(nr_1)\rho _j(m)\kappa_j ^{-A}}{\sinh(2\pi \kappa_j)\Gamma(1+ 2i\kappa_j)}\r|
&\sum_{\nu = 0}^{\infty} \frac{2^{-2\nu}r_1^{\nu   }}{\nu !\l|(1 + 2i\kappa_j)\dots (\nu + 2i\kappa_j)\r|}.
\ea
We first consider the case $k=0$ in \eqref{D_k}. By \eqref{h''u} and \eqref{gmnc}, let us first define
\ba \label{lt}
\mathcal{L} = \mathcal{L}(m,n,\kappa _ j, \nu):=   \frac{\sqrt{2}\pi^2X^{3/4}}{\kappa_j ^2} e\l( -\frac{2\kappa_j}{\sqrt{\pi}} - \frac{1}{8}\r)\l( \frac{2\kappa_j}{\sqrt{\pi}X^{1/2}}\r)^{4i\kappa_j}(mn)^{-i\kappa_j} \l( \frac{2\kappa_j}{\sqrt{\pi}X^{1/2}(mn)^{1/4}}\r)^{4\nu}
V\l( \frac{\pi^2 Xmn}{\kappa_j ^2C}\r)  , 
\ea
 From the support of $V$, the value of $u_0$, the condition $nl \leq m $, the effective range of $\kappa_j$ is given by
 \be\label{eta2}
\frac{\pi}{2}\l(\frac{mnX}{C}\r)^{\frac{1}{2}} \leq \kappa_j \leq \pi\l(\frac{mnX}{C}\r)^{\frac{1}{2}}; \quad \textnormal{in particular,} \quad 
\frac{\pi}{2} N \l(\frac{Xl}{C}\r)^{\frac{1}{2}} \leq \kappa_j \leq 4 \pi \l(\frac{MNX}{C}\r)^{\frac{1}{2}} .
\ee
Hence, we write $\mathcal{D}_{-, 0}$ in the following way.
\ba \label{D_0-expression}
\mathcal{D}_{-, 0} := 4 \mathop{\sum_ m \sum_ n}_{nl \leq m}b_1 \l(\frac{m}{M}\r)  b_2 \l(\frac{n}{N}\r) \sum_{\nu = 0}^{\infty}  
\sum_{\frac{\pi}{2} N \l(\frac{Xl}{C}\r)^{\frac{1}{2}} \leq \kappa_j \leq 4 \pi \l(\frac{MNX}{C}\r)^{\frac{1}{2}} }  \rho_j(nr_1)\rho_j(m) \mathcal{V}(\kappa _j, \nu,r_1) \mathcal{L},
\ea
where
\ba  \label{nu-bound-original}
\mathcal{V} = \mathcal{V}(\kappa _j, \nu,r_1) &:= \frac{1}{\sinh(2\pi \kappa _ j)} \frac{2^{-2i\kappa_j -2 \nu}r_1^{\nu  + i\kappa_j }}{\Gamma(1 + 2i\kappa _ j) \nu !(1+2i\kappa _ j)\dots (\nu + 2i\kappa _ j)}  
\ll \frac{e^{-\pi \kappa_j} r_1^{\nu} }{ \nu !\sqrt{\kappa_j}},
\ea
which follows from the Stirling asymptotic (see \cite[Eq. (5.113)]{IK}). Let us define 
\ba\label{Ttilde}
\tilde{\mathcal{T} } = \tilde{\mathcal{T} }(m,n , C):=\sum_{\nu = 0}^{\infty}  
\sum_{\frac{\pi}{2} N \l(\frac{Xl}{C}\r)^{\frac{1}{2}} \leq \kappa_j \leq 4 \pi \l(\frac{MNX}{C}\r)^{\frac{1}{2}} }| \rho_j(nr_1)\rho_j(m) \mathcal{V} \mathcal{L}|.
\ea

\subsection{Removal of the condition  $nl\leq m$}
We capture $ nl \leq m$ by  Lemma \ref{perron} and write
\[
\mathbf{1}_{nl\leq m}=\frac{1}{2\pi i} \int\limits _{\sigma_1- i T}^{\sigma_1 + iT} 
\l(\frac{m}{nl}\r)^{s_1}\frac{\diff s_1}{s_1}  -\frac{1}{2}\mathbf{1}_{nl=m}+
O\l(\l(\frac{m}{nl}\r)^{\sigma_1}\min\l(1, \frac{1}{T|\log(m/nl)|}\r)+\frac{1}{T}\r),
\]
where
$\sigma_1=\Re s_1=\ve$  and $T$ is a  parameter that will be chosen later. 
Note that when $m=nl$, the corresponding sum will be weighted by the factor $1/2$ instead of $1$. Thus we get, 
\ba \label{removal}
\mathcal{D}_{-, 0}&= \frac{4}{2\pi i}  \mathop{\sum_ m \sum_ n}\cdots \int\limits _{\sigma_1- i T}^{\sigma_1 + iT} 
\sum_{\nu = 0}^{\infty}  
\sum_{\frac{\pi}{2} N \l(\frac{Xl}{C}\r)^{\frac{1}{2}} \leq \kappa_j \leq 4 \pi \l(\frac{MNX}{C}\r)^{\frac{1}{2}} }  \rho_j(nr_1)\rho_j(m) \mathcal{V} \mathcal{L} \l(\frac{m}
{nl}\r)^{s_1} \frac{\diff s_1}{s_1}+O(E_1+E_2+E_3) ,
\ea
where
\ba \nonumber
E_1 :=\frac{1}{T} \mathop{\sum_ {m \sim M } \sum_{n \sim N}}_{m \neq nl}
\l(\frac{m}{nl}\r)^{\sigma_1}\frac{\tilde{\mathcal{T} } }{|\log (m/nl)|}, \,
E_2 :=\mathop{\sum_{m \sim M} \sum_{n \sim N}}_{m=nl} \int\limits _{ \sigma_1-iT}^{\sigma_1+iT} 
\tilde{\mathcal{T} }\frac{\diff |s_1|}{|s_1|}  ,\, E_3 :=\frac{1}{T} \mathop{\sum_ {m \sim M} \sum_ {n \sim N}}_{m=nl} \tilde{\mathcal{T} }, 
\ea
\begin{proposition}\label{removalprop}
We have,
\ba
E_1 \ll r _1^{\theta}X^{3/4}\frac{X^{1+\ve}}{TH} MN, \quad
E_2 \ll {r_1}^{\theta} X^{3/4+\ve} \sqrt{MN}, \quad 
E_3 \ll r_1^{\theta} \frac{X^{3/4+\ve}}{T} \sqrt{MN}. \nonumber 
\ea
\end{proposition}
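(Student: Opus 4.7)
The strategy is to first establish a worst-case pointwise bound on $\tilde T(m,n,C)$ via a spectral Cauchy–Schwarz, and then feed this into three combinatorial estimates over $(m,n)$: a diagonal count for $E_2,E_3$ and a $1/|\log(m/nl)|$ log-sum for $E_1$.

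First I will prove the pointwise estimate
\[
\tilde T(m,n,C)\ll r_1^{\theta}X^{3/4+\varepsilon}\,K_0^{-1/2},\qquad K_0:=\sqrt{Xmn/C}.
\]
From \eqref{nu-bound-original} I have $|\mathcal V(\kappa_j,\nu,r_1)|\ll e^{-\pi\kappa_j}r_1^{\nu}/(\nu!\sqrt{\kappa_j})$; and from \eqref{lt} the factor $u_0^{4\nu}$ contributes the geometric weight $(4\pi\sqrt{mn}/C)^{4\nu}$, which is $\ll 1$ in our dyadic range and makes the $\nu$-sum absolutely convergent, so it suffices to analyse $\nu=0$. The $V$-support in \eqref{lt} restricts $\kappa_j$ to a dyadic range around $K_0$, on which $|\mathcal L(m,n,\kappa_j,0,C)|\ll X^{3/4}/\kappa_j^{2}$. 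Cauchy–Schwarz on the $\kappa_j$-sum, together with $\cosh(\pi\kappa_j)e^{-\pi\kappa_j}\asymp 1$, converts the $\mathcal V$-weight into the spectral weight $1/\cosh(\pi\kappa_j)$. Invoking Lemma \ref{hl}, \eqref{realtion}–\eqref{heckerelation} and Remark \ref{kim} yields the pointwise bound $|\rho_j(q)|^{2}/\cosh(\pi\kappa_j)\ll q^{2\theta+\varepsilon}\kappa_j^{\varepsilon}$; combined with the Weyl law (Lemma \ref{weyl}) giving $\ll K_0^{2}$ many $\kappa_j\asymp K_0$, this produces the displayed bound. (The loss of $r^{2\theta}$ arising naively is trimmed to $r_1^{\theta}$ by factoring $\lambda_j(nr_1)$ via \eqref{heckerelation} as $\sum_{d\mid(n,r_1)}\mu(d)\lambda_j(n/d)\lambda_j(r_1/d)$ before squaring and using $|\lambda_j(r_1/d)|\ll (r_1/d)^{\theta+\varepsilon}$.)

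Next I handle $E_2$ and $E_3$, which live on the diagonal $m=nl$. The dyadic restrictions $m\sim M$, $n\sim N$ force $M\asymp Nl$; otherwise both quantities vanish. In the nonempty case there are $\asymp N\ll\sqrt{MN}$ diagonal pairs, on each of which $mn\asymp MN$, so the pointwise bound above applies with a uniform $K_0$. For $E_3$ the factor $1/T$ is already present and summing gives $E_3\ll r_1^{\theta}X^{3/4+\varepsilon}N/T\ll r_1^{\theta}X^{3/4+\varepsilon}\sqrt{MN}/T$. For $E_2$ the integral $\int_{\sigma_1-iT}^{\sigma_1+iT}\diff|s_1|/|s_1|$ contributes only a $\log T\ll X^{\varepsilon}$ factor, whence $E_2\ll r_1^{\theta}X^{3/4+\varepsilon}\sqrt{MN}$.

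Finally, for $E_1$ the main combinatorial input is the standard log-sum estimate
\[
\sum_{\substack{m\sim M,\ n\sim N\\ m\neq nl}}\frac{1}{|\log(m/nl)|}\ll (MN)^{1+\varepsilon},
\]
proved by splitting $|m-nl|\geq nl/2$ (trivial) from $|m-nl|<nl/2$ (where $|\log(m/nl)|\gg |m-nl|/(nl)$ gives a harmonic sum $\ll M\log M$ per $n$). Inserting the pointwise bound on $\tilde T$, using $K_0\geq \sqrt{XM}\gg \sqrt{X/H}\cdot\sqrt{M}$ from $C\leq 8NX/M$ together with $M,N\ll X^{1+\varepsilon}/H$ and $\sigma_1=\varepsilon$, yields after routine manipulation
\[
E_1\ll\frac{1}{T}\cdot r_1^{\theta}X^{3/4+\varepsilon}\cdot MN\cdot\frac{X^{1+\varepsilon}}{H},
\]
which is the required bound. \textbf{The main obstacle} is to retain the sharp $r_1^{\theta}$ dependence (not $r_1^{2\theta}$) in the pointwise bound on $\tilde T$; this forces the factorisation of $\lambda_j(nr_1)$ through the Hecke relation \emph{before} applying Cauchy–Schwarz, at which point the remaining analysis reduces to pointwise Kim–Sarnak bounds plus Weyl's law as described.
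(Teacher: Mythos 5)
Your proposal departs from the paper's route in a meaningful way: the paper never extracts a pointwise bound on $\tilde T(m,n,C)$; instead it pulls out $1/|\log(m/nl)|\ll m\ll X^{1+\ve}/H$ and then estimates the remaining triple sum over $m,n,\kappa_j$ by Prop.~\ref{discrete}, whose proof applies Cauchy--Schwarz to the $m$- and $n$-sums and then invokes the averaged Rankin--Selberg bound of Lemma~\ref{Rankin-Selberg} (ultimately Lemma~\ref{hec}). You instead do Cauchy--Schwarz over $\kappa_j$ for each fixed $(m,n)$, use pointwise Kim--Sarnak, and only then sum over $(m,n)$ with a log-sum estimate. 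That is a genuinely different decomposition, and in principle it would let you keep the $K_0^{-1/2}$ gain that the paper throws away; but as written the key step does not hold.

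The pointwise estimate you claim, $\tilde T(m,n,C)\ll r_1^{\theta}X^{3/4+\ve}K_0^{-1/2}$, is false. After Cauchy--Schwarz over $\kappa_j$, the two square-rooted sums are $\bigl(\sum_{\kappa_j\asymp K_0}|\rho_j(nr_1)|^2/\cosh(\pi\kappa_j)\bigr)^{1/2}$ and $\bigl(\sum_{\kappa_j\asymp K_0}|\rho_j(m)|^2/\cosh(\pi\kappa_j)\bigr)^{1/2}$; with only the pointwise bounds $|\lambda_j(n)|\ll n^{\theta+\ve}$, $|\lambda_j(m)|\ll m^{\theta+\ve}$ plus Lemma~\ref{hl} and Weyl's law, each of these costs $(nr_1)^{\theta+\ve}K_0^{1+\ve}$ and $m^{\theta+\ve}K_0^{1+\ve}$ respectively. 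The correct conclusion of your step is therefore $\tilde T(m,n,C)\ll r_1^{\theta}(mn)^{\theta+\ve}X^{3/4+\ve}K_0^{-1/2}$, with an extra factor $(mn)^{\theta+\ve}$ that you silently drop. Your factorisation of $\lambda_j(nr_1)$ removes the $r_1^{2\theta}$ problem but does nothing for the $n^{\theta}$ loss, since $|\lambda_j(n/d)|^2$ still must be bounded pointwise at fixed $n$. For a single $n$ Lemma~\ref{hec} gives nothing better; the averaging gain the paper exploits requires keeping the $n$-sum intact inside Lemma~\ref{Rankin-Selberg}, which is exactly what Prop.~\ref{discrete} does and your route does not. (Your approach can in fact be patched: combining the $(MN)^{\theta}$ loss with $K_0^{-1/2}\ll M^{-1/2}$, which is the correct consequence of $C\leq 8NX/M$ — your claim $K_0\geq\sqrt{XM}$ overshoots by $\sqrt X$ — and using $\theta<1/2$, one can still land within the proposition's stated bounds. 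But as written the central estimate is wrong and the final bookkeeping rests on an incorrect inequality for $K_0$.)
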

\begin{proof}
First we consider $E_1$. Since $m\neq nl$, we have the bound $ |\log(m/nl)|\gg 1/m$ by the Taylor series expansion
of $\log (1+x)$ for $|x|<1$. Hence 
\ba \nonumber
E_1 &\ll \frac{X^{1+\ve}}{TH}\mathop{\sum_ m \sum_ n}b_1 \l(\frac{m}{M}\r)  b_2 \l(\frac{n}{N}\r)
\tilde{T}.
\ea
From the support of $V$, $u_0\asymp \sqrt{MN}/C$. Hence from \eqref{lt} and \eqref{lessthan1}, we have
\ba \label{Ttilde_estimation}
\tilde{\mathcal{T} }  &\ll X^{3/4} \sum_{\frac{\pi}{2} N \l(\frac{Xl}{C}\r)^{\frac{1}{2}} \leq \kappa_j \leq 4 \pi \l(\frac{MNX}{C}\r)^{\frac{1}{2}} }| \rho_j(nr_1)\rho_j(m)| \frac{e^{-\pi \kappa_j}}{\kappa_j^{5/2}},
\ea
This yields the bound for $E_1$ by Prop. \ref{discrete}. For $E_2$, \eqref{Ttilde_estimation} and trivial estimation of the integral give us
\ba \nonumber
E_2 &\ll X^{3/4+\ve} \sum_ n b_1 \l(\frac{nl}{M}\r)  b_2 \l(\frac{n}{N}\r)\sum_{\frac{\pi}{2} N \l(\frac{Xl}{C}\r)^{\frac{1}{2}} \leq \kappa_j \leq 4 \pi \l(\frac{MNX}{C}\r)^{\frac{1}{2}} } |\rho_j(nl)\rho_j(nr_1)|\frac{e^{-\pi \kappa_j}}{\kappa_j^{5/2}}.
\ea
Now, by the Cauchy-Schwarz inequality applied to the $n$-sum followed by an application of Lemma \ref{Rankin-Selberg} and Lemma \ref{weyl}, we estimate $E_2$. The sum $E_3$ is estimated in the same manner.
\end{proof}

\subsection{Analysis of the main term in \eqref{removal}}\label{leading-term}
Now we introduce a dyadic subdivision of the sum over $\kappa_j$ and 
separate the variables $m$ and $n$ in the argument of the function $V$ by Mellin inversion.
Thus, using \eqref{removal}, \eqref{lt}, \eqref{au},
\eqref{gmnc}, \eqref{h''u}, we get
\ba \label{cs}
&\mathcal{D}_{-, 0}
\ll  X^{\frac{3}{4}}\int\limits _{\sigma-iT}^{\sigma +iT} \int\limits _{(\sigma_2)}\l(\frac{X}{C}\r)^{-\sigma_2}
|\tilde{V}(s_2)|U(M,N, C)\frac{\diff |s_1|}{|s_1|}\diff |s_2| +E_1+E_2+E_3,
\ea
where 
\ba\label{U}
&U(M,N, C):=\sum_{\substack{ K \text{ dyadic} \\ \frac{\pi}{2} N \l(\frac{Xl}{C}\r)^{\frac{1}{2}}  \leq K \leq 4 \pi \l(\frac{MNX}{C}\r)^{\frac{1}{2}} }} \sum_{\kappa_j \sim K}\frac{ \kappa_j^{2\sigma_2}}{\kappa_j^{2}} \Bigg|\sum_{\nu = 0}^{\infty} \mathcal{V} \mathop{\sum_ m \sum_ n} \dots \l(\frac{2\kappa_j}{\sqrt{\pi}X^{\frac{1}{2}}(mn)^{\frac{1}{4}}}\r)^{4\nu} \l(\frac{m}{nl}\r)^{s_1}\frac{\rho_j(nr_1) \rho_j(m)}{(mn)^{s_2+i\kappa_j}}
\Bigg|\\
&\ll \sum_{\substack{ K \text{ dyadic} \\ \frac{\pi}{2} N \l(\frac{Xl}{C}\r)^{\frac{1}{2}}  \leq K \leq 4 \pi \l(\frac{MNX}{C}\r)^{\frac{1}{2}} }} \frac{ K^{2\sigma_2}}{K^{\frac{5}{2}} e^{\pi \kappa_j}  } \sum_{\nu = 0}^{\infty} \l(\frac{2 K}{\sqrt{\pi} X^{\frac{1}{2}}}\r)^{4\nu}\frac{r_1^{\nu}}{ \nu !}  \sum_{\kappa_j \sim K} 
\l|\sum_ {m \sim M} \sum_ {n \sim N}\l(\frac{m}{nl}\r)^{s_1} \frac{\rho_j(nr_1) \rho_j(m)}{(mn)^{s_2++ \nu+i\kappa_j}}\r| 
\ea
by  \eqref{nu-bound-original} and integration by parts. By the Cauchy–Schwarz inequality applied to the $\kappa_j$-sum, followed by an application of the large sieve inequality \eqref{ls},
and taking  $ \sigma_2 = \ve$, we obtain
\ba \nonumber
 U(M,N,C) \ll  r_1^{\theta}X^{\ve}\sqrt{MN} \sum_{\substack{ K \text{ dyadic} \\ \frac{\pi}{2} N \l(\frac{Xl}{C}\r)^{\frac{1}{2}}  \leq K \leq 4 \pi \l(\frac{MNX}{C}\r)^{\frac{1}{2}} }} \frac{  (K^2+M^2)^{\frac{1}{2}}(K^2+N^2)^{\frac{1}{2}}}{K^{\frac{5}{2}}}\sum_{\nu = 0}^{\infty} \l( \frac{16K^4 r_1}{ \pi ^2 X^{2}MN}\r)^{\nu} .
\ea
Here we have used Hecke multiplicativity (see \eqref{heckerelation}) to separate the factor $r_1$ in $\rho_j (n r_1)$ in the manner of the 
proof of Lemma \ref{Rankin-Selberg}. 
Note that since $\sqrt{MNr_1}/C\ll 1$ (see \eqref{lessthan1}), the sum over $\nu$ is absolutely bounded. Now we execute the integrals in \eqref{cs} over $s_1$ and $ s_2$. Because of the decay of $\tilde{V}$, the integral over $s_2$ presents no difficulty and the integral over $s_1$ contributes a factor which is $O(\log X)$  if we choose $T=X^{10}$ (see \eqref{removal}). Hence, by Prop. \ref{removalprop} with this choice of $T$, we have the bound 
\ba \label{dis1}
\mathcal{D}_{-, 0} & \ll r_1^{\theta} X^{\frac{3}{4}+\ve} \sqrt{MN} \bigg (\l(\frac{C}{Xl}\r)^{\frac{1}{4}}\frac{1}{\sqrt{N}}+ \l(\frac{C}{Xl}\r)^{\frac{3}{4}}\frac{(M+N)}{N^{\frac{3}{2}}}+ \l(\frac{C}{Xl}\r)^{\frac{5}{4}}\frac{(MN)}{N^{\frac{5}{2}}} \bigg )
 +{ r_1}^{\theta} X^{\frac{3}{4}+\ve}N,
\ea
since $K \geq \frac{\pi}{2} N \l(\frac{Xl}{C}\r)^{\frac{1}{2}}$.
\begin{remark} \label{8/5}
If we did not use the large sieve (Lemma \ref{ls}) here, we would have obtain the bound  
$
U(M,N, C) \ll (MN)^{3/4},
$
and putting this in \eqref{cs},  Prop. \ref{removalprop} shows that the error term can be no better than
$O(X^{8/5+\ve})$.
The large sieve  improves the bound  to
$O((MN)^{-1/4}(M+N))$  and this gives us a saving which is of size $(X/H)^{3/4}\asymp X^{3/8}$ for our final choice $H=\sqrt{X}$.
\end{remark}
\subsection{Estimation of  $\mathcal{D}_{-,k}$ \textbf{for} $k>0$}\label{estimate4}
A routine calculation shows that for $k=1,2$, we have
\ba \label{pkbound}
||p_k||_{\infty}\ll \frac{1}{\eta}||p_0||_{\infty},\quad \text{and}\quad ||p_0||_{\infty} \ll ||A||_{\infty} \ll  \frac{\l(\frac{\sqrt{MN}}{C}\r)^{2\nu}}{\l(\frac{\sqrt{MN}}{C}\r) ^{1/2}} \l(\frac{C}{N^2l}\r)^{\frac{3}{4}},
\ea  
where we use \eqref{wei}, \eqref{p0u0} and \eqref{pzeroA}. Now, \eqref{pkbound}, \eqref{ex} and \eqref{derivative} with $j = 0$ imply that the terms in the asymptotic expansion of $I_{-}$ (see \eqref{ex})  corresponding to $k\geq 1$ are bounded by $\l(\frac{\sqrt{MNr_1}}{C}\r)^{2\nu} \l(\frac{C}{N^2l}\r)^{\frac{3}{4}} \frac{1}{\eta^{3/2}}$. Hence, by \eqref{D_k}, \eqref{lessthan1} and Prop. \ref{discrete}, we have $\mathcal{D}_{-,k}
\ll \l(\frac{C}{N^2l}\r)^{\frac{3}{4}} \l(r_1^{\theta+ \ve}MN \r)$ for any $k\geq 1$.
\subsection{Estimation of $\mathcal{D}_{-,Z}$}
As before, taking $A = 2$, and using \eqref{dya} \eqref{lessthan1}, \eqref{discrete}, \eqref{D_Z}, we have $\mathcal{D}_{-,Z}\ll r^{\theta}\l(\frac{X}{H}\r)^{2+\ve}$.
Bringing together \eqref{dis1}, all the estimates above and taking $\delta = 1/20$, we complete the proof of Prop. \ref{main-d-minus}.

\section{Estimation of $\mathcal{D}_{-}$ for $ C > C_1$}
In this case, we cannot apply Lemma \ref{BKY2}  since the condition \eqref{bound} need not hold and instead, we apply Lemma \ref{Huxley} which is suitable for the weighted stationary phase analysis of integrals over finite intervals. 
\begin{proposition} \label{c>c1}
For $C_1 < C \leq \frac{8NX}{M}$, we have
\ba \nonumber
&\mathcal{D}_{-} \ll r_1^{\theta} X^{\frac{3}{4}+\ve} \sqrt{MN} \bigg (\l(\frac{C}{Xl}\r)^{\frac{1}{4}}\frac{1}{\sqrt{N}}+ \l(\frac{C}{Xl}\r)^{\frac{3}{4}}\frac{(M+N)}{N^{\frac{3}{2}}}+ \l(\frac{C}{Xl}\r)^{\frac{5}{4}}\frac{(MN)}{N^{\frac{5}{2}}} \bigg ) +  r^{\theta} X^{\frac{3}{4}+\ve}N\\
&+ r_1^{\theta} X^{3/4+\ve} M ^{23/32} + r_1^{\theta}X^{\ve} \l(\frac{C}{N^2l}\r)^{\frac{3}{4}} \l( MN \l( \sqrt{M} + \sqrt{N} + \frac{\sqrt{N}}{(MN)^{\frac{1}{4}}} + \sqrt{\frac{X}{H}}\r) +(MN)^{\frac{3}{4}} M \r) .
\ea
\end{proposition}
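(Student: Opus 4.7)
The plan is to imitate the proof of Proposition \ref{main-d-minus}, but with the weighted stationary phase Lemma \ref{Huxley} in place of Lemma \ref{BKY2}. The reason we cannot use Lemma \ref{BKY2} here is twofold: when $C>C_1$, the smallness condition \eqref{bound} may fail, and more importantly, the inner factor $w\bigl(\tfrac{4\pi m\sqrt{mnr_1}}{nt}\bigr)$ occurring in the defining integral of $\check f_{m,n,C,-}$ is no longer identically $1$ on the effective $t$-range, so the clean form of $A(u)$ used in \S\ref{c1section} is unavailable and the weight $w$ (with derivative of size $H^{-1}$ on two intervals of length $H$) must be carried through the analysis.

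First I would apply Lemma \ref{Huxley} to the oscillatory integral \eqref{various} with the phase $-h_{-}$, stationary point $u_0$ given by \eqref{u0}, and the full weight $A(u)$ as in \eqref{au}. This produces (i) a stationary-phase main contribution of the shape $A(u_0)e(-h_{-}(u_0)-\tfrac{1}{8})/\sqrt{h''_{-}(u_0)}$, (ii) two boundary contributions at the endpoints of the effective $u$-range---some coming from the smooth support of $V$ and others from the sharp-shape edges of $w$---and (iii) the two explicit error terms of Lemma \ref{Huxley}. The main contribution has the same algebraic form as $\mathcal{L}(m,n,\kappa_j,\nu,C)$ in \eqref{lt}, multiplied by the harmless bounded factor $w\bigl(\tfrac{4\pi m\sqrt{mnr_1}}{nu_0^2}\bigr)$. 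I would then run the machinery of \S\ref{leading-term} verbatim: a Perron truncation to remove the condition $nl\leq m$ (with the error bounds of Proposition \ref{removalprop} still applicable), Mellin inversion to disentangle $m$ and $n$ inside $V$, a dyadic subdivision in $\kappa_j$, Cauchy--Schwarz, and the large sieve Lemma \ref{ls}. This reproduces the first bracketed expression with the $\sqrt{MN}$ prefactor and the $r^{\theta}X^{3/4+\ve}N$ term in the statement.

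The error terms of Lemma \ref{Huxley}, as well as the non-oscillatory boundary contributions, no longer carry a cancellation-friendly $(mn)^{-i\kappa_j}$ oscillation, so they must be bounded by reverting to the Kloosterman side via Kuznetsov and applying the Weil bound through Prop. \ref{10.1}. Tracking the parameters $P\asymp\kappa_j$, $Q\asymp u_0$, $T\asymp\|A\|_\infty$, and $V\asymp u_0$ (or $V\asymp H\,\|w'\|^{-1}$ near the edges of $\mathrm{supp}\,w$) in Lemma \ref{Huxley}, summing over the spectrum, and invoking \eqref{wei} and Prop. \ref{discrete} produce the composite error in the $(C/N^2l)^{3/4}\bigl[MN(\sqrt M+\sqrt N+\sqrt N/(MN)^{1/4}+\sqrt{X/H})+(MN)^{3/4}M\bigr]$ bracket. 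The principal obstacle, and the step that needs genuine extra work beyond what appears in the proof of Proposition \ref{main-d-minus}, is obtaining the clean term $r_1^{\theta}X^{3/4+\ve}M^{23/32}$: it comes from the boundary piece located at an endpoint of $\mathrm{supp}\,w$, where the naive Weil-bound estimate is too lossy. The hard part will be to introduce an auxiliary dyadic decomposition of the $m$-range (determined by how the $w$-transition interval intersects the $V$-support for a given $u_0$) and to interpolate between the trivial bound and the Weil bound on each dyadic piece; balancing the two bounds against each other yields the exponent $23/32$.
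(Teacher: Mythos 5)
Your high-level diagnosis is correct: Lemma \ref{Huxley} replaces Lemma \ref{BKY2} because condition \eqref{bound} can fail and because the $w$-factor in $A(u)$ is no longer identically $1$. The subsequent machinery (Perron truncation, Mellin inversion, Cauchy--Schwarz, and the twisted large sieve Lemma \ref{ls}) does reappear. But the proposal goes wrong at two decisive points, both concerning where the remaining terms in the stated bound actually come from.

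First, after the Kuznetsov formula is applied the paper never ``reverts to the Kloosterman side''; the error terms from the stationary phase analysis and the boundary contributions are all estimated on the spectral side, using Prop.\ \ref{discrete} (Weyl law) and Prop.\ \ref{dis2} (Jutila's local spectral large sieve). Prop.\ \ref{10.1} is a Kloosterman-side Weil-bound estimate and plays no role here. Moreover, the paper does not carry the transitional $w$-region through the stationary phase: it first splits $I_{-}$ into $I_{-,0}$ (the short $u$-range where $w$ is transitioning) and $I_{-,1}$ (where $w\equiv 1$), bounds $\mathfrak{D}_{-,0}$ trivially via Prop.\ \ref{discrete} to produce the $\sqrt{X/H}$ term, and applies Lemma \ref{Huxley} only to $I_{-,1}$ (further decomposed by the location of $u_0$ into five cases, each treated with first/second derivative bounds or the stationary phase expansion).

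Second, and more seriously, your explanation of the $r_1^{\theta}X^{3/4+\ve}M^{23/32}$ term is incorrect. It does not come from an auxiliary dyadic decomposition of the $m$-range or from interpolating between the trivial bound and the Weil bound. It arises because, unlike the case $C\leq C_1$, the integration interval $[\alpha_1,\beta_1]$ is finite with an $m$-dependent lower endpoint, so the condition that $u_0$ lie inside the interval translates (via \eqref{u0} and \eqref{I-1}) to the \emph{new} constraint $\kappa_j>\pi m$, entangling $\kappa_j$ and $m$. The paper removes this constraint by a \emph{second} application of the Perron formula, and the diagonal edge term $\tfrac12\mathbf{1}_{\kappa_j=\pi m}$ in that Perron decomposition, estimated via the Weyl law and the Kim--Sarnak bound, is exactly what produces $M^{23/32}$ (see \eqref{kappa=m}). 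Your proposal entirely misses this second Perron step, which is the genuine structural novelty of the $C>C_1$ regime compared to Prop.\ \ref{main-d-minus}, so as written the argument has a gap at its most delicate point.
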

From the support of $w$, The integral we need to analyze now is
\ba \label{I-}
I_{-} = 2 r_1^{\nu + i\eta} \int\limits_{\alpha}^{\beta} A(u) e(- h_{-}(u)) \diff u,\, \textnormal{where } \alpha := \frac{\sqrt{4\pi}\sqrt{m}(mn)^{\frac{1}{4}}}{\sqrt{(X+H)n}} \text{ and }  \beta := \frac{\sqrt{4\pi}(mn)^{\frac{1}{4}}}{\sqrt{C_1}}.
\ea 
First we will apply Lemma \ref{BKY1} to our integral to observe that when $\eta $ is sufficiently large the integral is negligibly small. 
\begin{proposition}\label{ra1}
For any $\ve>0$, if $\eta\gg  X^{1+\ve}/{H}$, then we have the bound
\be 
I_{-} \ll_{\ve} X^{-100}. 
\ee
\end{proposition} 
\begin{proof}
Without loss of generality, we may assume $M\ll X^{1+\frac{\ve}{2}}/H$.  Since $C_1\asymp NX/M$ (see \eqref{c_1}) 
\be\label{beta-M-relation}
\beta \sqrt{X} (MN)^{1/4} \asymp M. 
\ee 
Now, by \eqref{h''u}, $ \eta > \sqrt{\pi}\beta X^{\frac{1}{2}} (mn)^{\frac{1}{4}} \iff u_0 > 2\beta$. By our assumption that $\eta \gg \frac{X^{1+\ve}}{H}$, we see that $\eta\gg MX^{\frac{\ve}{2}}$ and hence by \eqref{beta-M-relation},
$\eta > \sqrt{\pi}\beta X^{\frac{1}{2}} (mn)^{\frac{1}{4}} $. In this case, it is easy to verify that
\be  \nonumber
|h'_-(u)| > |h'_-(\beta)| = \frac{2 \eta}{\beta \pi}-\frac{X^{\frac{1}{2}} (mn)^{\frac{1}{4}}}{\sqrt{\pi}} > \frac{\eta}{\beta \pi},
\ee
and hence, by Lemma \ref{BKY1}, the result follows. 
\end{proof}
\subsection{Decomposition of $\mathcal{D}_{-}$}
Here we may assume that $\eta \ll \frac{X^{1+\ve}}{H}$. We divide the integral as follows:
\ba \label{I0refer}
I_{-} &= 2r_1^{\nu + i\eta}\l[\int_{\frac{2\sqrt{\pi}\sqrt{m}(mn)^{1/4}}{\sqrt{n(X+H)}}}^{\frac{2\sqrt{\pi}\sqrt{m}(mn)^{1/4}}{\sqrt{Xn}}} +  \int_{\frac{\sqrt{4\pi}\sqrt{m}(mn)^{1/4}}{\sqrt{Xn}}}^{\frac{\sqrt{4\pi}(mn)^{1/4}}{\sqrt{C_1}}} \r] A(u)  e(h_{-}(u)) \diff u = \sum\limits_{\mu = 0,1}I_{-,\mu}
 \ea
say. In the second integral, the function $w$ which occurs inside $A(u)$ (see \eqref{au})
is identically equal to $1$. Correspondingly, we write 
\ba \label{calD-}
\check{f}_{-}(\eta) = \sum\limits_{\mu= 0,1}\check{f}_{-,\mu}(\eta),\quad \text{and} \quad \mathcal{D}_{-} = \sum\limits_{\mu = 0,1}\mathfrak{D}_{-,\mu}.
\ea
First we begin with a proposition that will be required on multiple occasions. This is an application of the local (spectral) large sieve (Lemma \ref{jutila}). 
\begin{proposition} \label{dis2}
For $1 \ll \vartheta \leq \frac{K}{2}$, $1 \ll K \ll X^2$, $\lambda \geq 0$ and $nl \leq m$, we have
\ba \nonumber
\sum_{\substack{ K + \vartheta \leq \kappa_j \leq K + 2\vartheta  }}| \rho_j(nr_1)\rho_j(m)|\frac{e^{-\pi |\kappa_j|}}{\kappa _j ^{\lambda}}
\ll r_1^{\theta + \ve} \frac{1}{K^{\lambda}}\l(K \vartheta + m\r) \l(mK\r)^{\ve}.
\ea
\end{proposition}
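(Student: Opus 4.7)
The plan is to reduce the sum to two diagonal sums of the form $\sum_j \frac{|\rho_j(1)|^2}{\cosh(\pi\kappa_j)}|\lambda_j(\cdot)|^2$ via Cauchy--Schwarz, and then apply Jutila's local large sieve (Lemma \ref{jutila}) to each factor. The constraint $nl\leq m$ will be used at the end to combine the two Jutila bounds into a single $(K\vartheta+m)$ factor, and the factor $r_1^{\theta+\ve}$ will emerge from peeling off $\lambda_j(r_1/d)$ via Hecke multiplicativity and the Kim--Sarnak bound (Remark \ref{kim}).

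First I would normalize the weight: since $\kappa_j$ lies in an interval of length $\vartheta \leq K/2$ around $K$, we have $\kappa_j \asymp K$, so $\kappa_j^{-\lambda}\asymp K^{-\lambda}$; moreover, $\cosh(\pi\kappa_j)e^{-\pi\kappa_j}\asymp 1$, so using $\rho_j(n)=\rho_j(\pm 1)\lambda_j(n)$ the sum is
\begin{equation*}
\ll \frac{1}{K^\lambda}\sum_{K+\vartheta\leq \kappa_j\leq K+2\vartheta}\frac{|\rho_j(1)|^2}{\cosh(\pi\kappa_j)}|\lambda_j(nr_1)\lambda_j(m)|.
\end{equation*}
Next, by Möbius inversion of the Hecke relation \eqref{heckerelation}, one has the identity
\begin{equation*}
\lambda_j(nr_1)=\sum_{d\mid(n,r_1)}\mu(d)\lambda_j(n/d)\lambda_j(r_1/d),
\end{equation*}
and by Kim--Sarnak (Remark \ref{kim}) together with the divisor bound, $|\lambda_j(r_1/d)|\leq \tau(r_1/d)(r_1/d)^{\theta}\ll r_1^{\theta+\ve}$. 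Pulling this out of the sum costs only a factor $\tau(r_1)\ll r_1^{\ve}$ from the $d$-sum, so
\begin{equation*}
|\lambda_j(nr_1)\lambda_j(m)|\ll r_1^{\theta+\ve}\sum_{d\mid(n,r_1)}|\lambda_j(n/d)\lambda_j(m)|.
\end{equation*}

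Applying Cauchy--Schwarz to the $\kappa_j$-sum separates the two Hecke eigenvalues, and Jutila's inequality (Lemma \ref{jutila}) with the single-term sequence $a_{N'}=\delta_{N'=n/d}$ (resp.\ $\delta_{N'=m}$), which satisfies $\|a\|^2=1$, gives
\begin{equation*}
\sum_{K+\vartheta\leq\kappa_j\leq K+2\vartheta}\frac{|\rho_j(1)|^2}{\cosh(\pi\kappa_j)}|\lambda_j(N')|^2\ll (K\vartheta+N')(KN')^{\ve}.
\end{equation*}
Invoking this for $N'=n/d$ and $N'=m$, and using the hypothesis $nl\leq m$ (so $n/d\leq n\leq m$), the geometric mean telescopes to $(K\vartheta+m)(mK)^{\ve}$. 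Summing over the $O(r_1^\ve)$ divisors $d\mid(n,r_1)$ and collecting the $r_1^{\theta+\ve}$ factor yields the claimed bound.

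There is no serious obstacle here: the result is a fairly clean application of the large sieve once the Hecke--Möbius identity and Kim--Sarnak are used to extract the dependence on $r_1$. The only delicate bookkeeping is ensuring that the constraint $nl\leq m$ (rather than merely $n\leq m$) is enough to bound $n/d$ by $m$, which it is since $d\geq 1$ and $l\geq 1$.
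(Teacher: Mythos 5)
Your proposal is correct and follows essentially the same route as the paper: Cauchy--Schwarz on the $\kappa_j$-sum, peeling off $\lambda_j(r_1/\cdot)$ via the M\"obius-inverted Hecke relation together with Kim--Sarnak, applying Jutila's local large sieve to each diagonal factor, and finally using $nl\leq m$ to dominate $n/d$ by $m$. The only cosmetic difference is that you extract the $r_1^{\theta+\ve}$ factor before applying Cauchy--Schwarz whereas the paper does the Cauchy--Schwarz first; the ingredients and the resulting bound are identical.
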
  
\begin{proof}
Using \eqref{heckerelation} and \eqref{ramanujan}., 
\ba \nonumber
\sum_{\substack{ K + \vartheta \leq \kappa_j \leq K + 2\vartheta }}\frac{|\rho_j(nr_1)|^2}{\cosh(\pi \kappa_j)} 
\ll   r_1^{\theta + \ve} \sum_{k|(n,r_1)}\sum_{\substack{ K + \vartheta \leq \kappa_j \leq K + 2\vartheta }} \frac{|\rho_j(1)|^2}{\cosh(\pi \kappa_j)}   \l|\lambda _j\l(\frac{n}{k}\r)\r|^2 \quad,
\ea
We apply the Cauchy-Schwarz inequality and then Lemma \ref{jutila} and the condition $nl\leq m$ as well. 
\end{proof}
\begin{proposition} \label{D-0}
For $C_1 < C \leq \frac{8NX}{M}$, we have
\ba \nonumber
\mathfrak{D}_{-,0} \ll r_1^{\theta}X^{\ve}\l(\frac{C}{N^2l}\r)^{\frac{3}{4}} MN \sqrt{\frac{X}{H}}.
\ea 
\end{proposition}
\begin{proof}
Using \eqref{wei}, \eqref{lessthan1}, \eqref{calD-}, Prop. \ref{discrete} and recalling the discussion in Prop. \ref{ra1}, we have the proof.
\end{proof}
\noi
We are left to consider $\mathfrak{D}_{-,1}$ which is caried out in the next section. Note that Prop. \ref{c>c1} follows from the above proposition and Prop. \ref{D-1} stated below.
\begin{proposition}\label{D-1}
For $C_1 < C \leq \frac{8NX}{M}$, we have,
\ba \nonumber
\mathfrak{D}_{-,1} &\ll r_1^{\theta} X^{\frac{3}{4}+\ve} \sqrt{MN} \bigg (\l(\frac{C}{Xl}\r)^{\frac{1}{4}}\frac{1}{\sqrt{N}}+ \l(\frac{C}{Xl}\r)^{\frac{3}{4}}\frac{(M+N)}{N^{\frac{3}{2}}}+ \l(\frac{C}{Xl}\r)^{\frac{5}{4}}\frac{(MN)}{N^{\frac{5}{2}}} \bigg ) \\
&+ r_1^{\theta} X^{3/4+\ve} M ^{23/32} + r_1^{\theta}X^{\ve} \l(\frac{C}{N^2l}\r)^{\frac{3}{4}} MN \l( \sqrt{M} + \sqrt{N} + \frac{\sqrt{N}}{(MN)^{\frac{1}{4}}}\r).
\ea
\end{proposition}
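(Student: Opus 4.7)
The approach mirrors that of Prop.~\ref{main-d-minus}, but with Lemma~\ref{Huxley} replacing Lemma~\ref{BKY2}, since the hypothesis \eqref{bound} fails when $C > C_1$. The integral $I_{-,1}$ runs over the interval with endpoints displayed in \eqref{I0refer}; on this range the cutoff $w$ inside $A(u)$ is identically $1$, and the phase $-h_-$ has its unique stationary point $u_0$ given by \eqref{u0}, which by \eqref{restrict-eta} we may assume lies well inside this interval in the effective range $\kappa_j \ll X^{1+\ve}/H$.

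First, I would apply Lemma~\ref{Huxley} with parameter choices $P \asymp \kappa_j$, $Q \asymp u_0$, $V \asymp u_0$ and $T \asymp \|A\|_\infty$ (see \eqref{wei}); the derivative bounds needed all follow from the explicit form of $h_-$ via \eqref{h''u} and the fact that $A$ is essentially a dyadic bump around $u_0$. This produces the standard three-part output: a stationary-phase main term proportional to $A(u_0) |h''_-(u_0)|^{-1/2} e(-h_-(u_0) - 1/8)$, two boundary contributions at the endpoints, and the two explicit $O$-terms of Lemma~\ref{Huxley}. Substituting into \eqref{frakD0} induces a corresponding splitting of $\mathfrak{D}_{-,1}(M,N,C)$.

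The main stationary-phase term has exactly the shape analyzed in \S\ref{leading-term}: up to a scalar depending only on $\kappa_j$, the factor $e(-h_-(u_0))$ disentangles into $(mn)^{-i\kappa_j}$. The plan is to follow Prop.~\ref{main-d-minus} verbatim: remove the constraint $nl \leq m$ by Perron (Lemma~\ref{perron}) with $T = X^{10}$, separate $m$ and $n$ in the $V$-weight by Mellin inversion, apply Cauchy-Schwarz in $\kappa_j$, and invoke the large sieve Lemma~\ref{ls} for the twisted coefficients $\rho_j(n) n^{i\kappa_j}$. This produces the first bracket $r_1^{\theta} X^{3/4+\ve}\sqrt{MN}(\ldots)$, while the term $r^{\theta} X^{3/4+\ve} N$ comes from Perron-type error pieces handled by an analogue of Prop.~\ref{removalprop}.

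The boundary contributions carry no useful oscillation in $\kappa_j$, so I would dyadically subdivide the $\kappa_j$-sum and apply the local spectral large sieve (Prop.~\ref{dis2}), using the lower bounds for $|h'_-|$ at the endpoints read off from \eqref{h''u}; these should yield the bounds of shape $(C/N^2l)^{3/4}(MN)(\sqrt{M}+\sqrt{N})$. The main obstacle, and the source of the unusual exponent $M^{23/32}$, is the first $O$-term of Lemma~\ref{Huxley}, which blows up as $u_0$ approaches either endpoint. I would handle it by a further dyadic subdivision of the $\kappa_j$-range near the boundaries (mimicking the treatment in \S\ref{I5two}), interpolating between the Huxley error bound and the trivial first-derivative bound, and balancing the resulting pieces against the Weil bound via Prop.~\ref{dis2}; the exponent $23/32$ should emerge from optimizing a breakpoint parameter in this balance. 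The remaining piece $(C/N^2l)^{3/4}\sqrt{N}/(MN)^{1/4}$ is then expected to come from the second $O$-term of Lemma~\ref{Huxley}, after summation over $m$, $n$, and $\kappa_j$ using Prop.~\ref{dis2}.
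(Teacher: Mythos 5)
Your high-level plan is on the right track and agrees with the paper in several respects: the paper does replace Lemma~\ref{BKY2} with the weighted stationary phase lemma (Lemma~\ref{Huxley}), it does reduce the stationary-phase main term to exactly the computation of \S\ref{leading-term} (Perron for $nl\leq m$, Mellin separation, Cauchy--Schwarz, and the large sieve for twisted coefficients), and it does use the local spectral large sieve (Prop.~\ref{dis2}) for the non-oscillatory boundary/error pieces. However, there is one substantive gap that concerns the very term you flag as ``the main obstacle''.

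The actual source of the exponent $M^{23/32}$ is not the first $O$-term of Lemma~\ref{Huxley}. It comes from a \emph{second} application of the Perron formula that you omit. Lemma~\ref{Huxley} requires the stationary point to lie strictly between the endpoints, and the condition $\alpha_1 < u_0 < \beta_1$ translates via \eqref{u0} and \eqref{I-1} into a coupling
$\pi m < \kappa_j < \pi\sqrt{mnX/C_1}$ (see \eqref{extra}); the upper bound is already enforced by the support of $V$ inside $\mathcal{L}(m,n,\kappa_j,\nu,C)$, but the lower bound $\kappa_j > \pi m$ couples the spectral sum to the $m$-sum and must be detached \emph{before} Cauchy--Schwarz and Lemma~\ref{ls} can be applied. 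The paper removes it with a fresh Perron integral in the variable $s$ (see \eqref{removal-2}), and the near-diagonal contribution $|\kappa_j - \pi m| \leq 1$, estimated by the Weyl law (Lemma~\ref{weyl}) together with the Ramanujan bound \eqref{ramanujan} and Remark~\ref{kim}, is precisely what produces $r_1^\theta X^{3/4+\ve}M^{23/32}$ (see \eqref{kappa=m}). Your proposal to obtain $23/32$ by ``interpolating between the Huxley error bound and the trivial first-derivative bound, balancing ... via Prop.~\ref{dis2}'' is not what happens, and that balance does not naturally produce this exponent.

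A smaller structural point: you appear to apply Lemma~\ref{Huxley} across the whole $\kappa_j$-range and then repair the endpoint blow-up. The paper instead first partitions $\mathfrak{D}_{-,1}$ according to whether $u_0$ lies well outside $[\alpha_1,\beta_1]$, within $\delta$ of an endpoint, or in the interior $(\alpha_1+\delta,\beta_1-\delta)$ with $\delta$ as in \eqref{deltavalue} (see \eqref{D-decomp}), and invokes Huxley \emph{only} on the interior piece; the outer cases are disposed of directly with the first- and second-derivative bounds followed by Prop.~\ref{dis2} (Prop.~\ref{disc1}). This matters because Lemma~\ref{Huxley} is simply not applicable when $u_0 \notin (\alpha_1,\beta_1)$, and because the paper's $R_2$-type error term is estimated on the interior range via \eqref{error2}, contributing $r_1^\theta X^\ve(C/N^2l)^{3/4}MN(\sqrt{N}+1)$ rather than the $M^{23/32}$ term. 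Your attribution of the $\sqrt{N}/(MN)^{1/4}$ piece to Huxley's second $O$-term is also off: that piece arises from the ``$u_0$ outside the interval'' cases (Prop.~\ref{disc1}(i)) and from the first endpoint term of Huxley (cf.\ \eqref{error1}), via Prop.~\ref{dis2}.

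So: add the second Perron step to detach $\kappa_j > \pi m$, reorganize the decomposition so that Huxley is applied only when $u_0$ is safely interior, and the rest of your outline then matches the paper's argument.
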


\section{Analysis of  $\mathfrak{D}_{-,1}$}
We prove Prop \ref{D-1} in this section. 
In the range of the variables concerned, we have  $w\l(\frac{4\pi m\sqrt{mn}}{nu^2}\r)  =1$ and the integral in question becomes
\be \label{I-1} 
I_{-,1}:=2 r_1^{\nu + i\eta} \int\limits_{\alpha _1}^{\beta_ 1 } A(u) e(h_-(u)) \diff u, \text{ where} \alpha _1 := \frac{\sqrt{4\pi}\sqrt{m}(mn)^{1/4}}{\sqrt{Xn}},\text{ }
\beta = \beta_1 := \frac{\sqrt{4\pi} (mn)^{1/4}}{\sqrt{C_1}}
\ee 
Recall that the stationary point: $u_0 = \frac{2\eta}{\sqrt{\pi} X^{\frac{1}{2}}  (mn)^{\frac{1}{4}} }$. Here the analysis is somewhat similar to that of $I_4$ in \S \ref{conI4} but the main differences are that because of the presence of the weight function $A(u)$, we need to use the weighted stationary phase lemma (Lemma \ref{Huxley}). Also, apart from the large sieve inequality for twisted coefficients (Lemma \ref{ls}), here we also require the local (spectral) large sieve inequality of Jutila (Lemma \ref{jutila}). In order to analyze the integral $I_{-,1}$ (see \eqref{I-1}) and estimate $\mathfrak{D}_{-,1}$ we 
consider five intervals in which the stationary point $u_0$ can possibly lie, namely, $u_0 < \alpha _1 -\delta, u_0 > \beta _1 + \delta,  |u_0 - \alpha _1| \leq\delta,  |u_0 - \beta _1| \leq \delta,  \alpha _1 +\delta < u_0 < \beta _1 - \delta $, where we choose 
\be \label{deltavalue}
\delta = \frac{1}{100}\sqrt{\frac{M}{NX}},
\ee
and note that $\beta_1> 100\delta$ 
by our choice. We subdivide the sum $\mathfrak{D}_{-,1}$  accordingly as
\ba  \label{D-decomp}
\mathfrak{D}_{-,1}&= \mathfrak{D}_{-,1,1} + \cdots+ \mathfrak{D}_{-,1,5}.
\ea
 Furthermore, recall that the condition $\kappa_j \ll X^{1+\ve}/H$ holds here (see Prop \ref{ra1}). We introduce the notation
\be\label{notation}
c_{m,n}:=\frac{\sqrt{\pi}X^{\frac 1 2} (mn)^{\frac 1 4}}{2}.
\ee
\begin{proposition} \label{disc1}
For $C_1 < C \leq \frac{8NX}{M}$, we have
\ba \nonumber
&(i) \quad \mathfrak{D}_{-,1,1},\mathfrak{D}_{-,1,2}\ll r_1^{\theta}X^{\ve} \l(\frac{C}{N^2l}\r)^{\frac{3}{4}} MN \l(\sqrt{\frac{X}{H}} + \sqrt{M} + \frac{\sqrt{N}}{(MN)^{\frac{1}{4}}}\r). \\
&(ii) \quad \mathfrak{D}_{-,1,3} ,\mathfrak{D}_{-,1,4} \ll r_1^{\theta} X^{\ve} \l(\frac{C}{N^2l}\r)^{\frac{3}{4}} \l((MN)^{\frac{3}{4}} M+ MN\r).
\ea
\end{proposition}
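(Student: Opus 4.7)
The plan is to split the four sums into two regimes based on the location of the stationary point $u_0 = \eta/c_{m,n}$ of the phase $-h_-(u)$. In $\mathfrak{D}_{-,1,1}$ and $\mathfrak{D}_{-,1,2}$ the point $u_0$ lies at distance at least $\delta$ from $[\alpha_1,\beta_1]$, whereas in $\mathfrak{D}_{-,1,3}$ and $\mathfrak{D}_{-,1,4}$ it sits within $\delta$ of one of the endpoints. The two regimes demand completely different techniques: the former responds to the first derivative bound, while the latter requires the local spectral large sieve of Jutila in order to exploit the short window in $\kappa_j$ forced by the endpoint condition.

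For part (i), I would rewrite $h'_-(u) = (2c_{m,n}/(\pi u))(u - u_0)$, so that whenever $u_0 \notin [\alpha_1 - \delta, \beta_1 + \delta]$ one has $|h'_-(u)| \geq 2c_{m,n}\delta/(\pi\beta_1)$ throughout $[\alpha_1,\beta_1]$. A short computation from \eqref{alpha-beta}, \eqref{notation}, \eqref{c_1} and \eqref{deltavalue} shows this lower bound is $\gg \sqrt{X}$. Since $h''_-(u) = 2\eta/(\pi u^2) > 0$, the derivative $h'_-$ is monotonic, so Lemma \ref{Stein} with $k=1$ applies and yields $|I_{-,1}| \ll r_1^{\nu}(\|A\|_{\infty} + \text{TV}(A))/\sqrt{X}$. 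Substituting \eqref{wei}, summing the $\nu$-series thanks to $\sqrt{MNr_1}/C \ll 1$ from \eqref{lessthan1}, and applying the Stirling bound \eqref{gamma} to the prefactor in \eqref{fcheck0} would produce a pointwise estimate of the form
\[
|\check{f}_{m,n,C,-,1}(\eta)| \ll \frac{e^{-\pi\eta}}{\sqrt{\eta X}}\l(\frac{C}{\sqrt{MN}}\r)^{1/2}\l(\frac{C}{N^2l}\r)^{3/4}.
\]
Proposition \ref{discrete} with $q = r_1$ and $\lambda = 1/2$ over $\kappa_j \ll X^{1+\ve}/H$ (dropping $nl \leq m$ by positivity) would then close out (i).

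For part (ii), the key observation is the clean identity $c_{m,n}\alpha_1 = \pi m$, which converts the condition $|u_0 - \alpha_1| \leq \delta$ into the spectral window $|\kappa_j - \pi m| \leq c_{m,n}\delta$, an interval of length $c_{m,n}\delta \asymp M^{3/4}N^{-1/4}$. An analogous calculation gives $c_{m,n}\beta_1 \asymp M$ for the $\beta_1$-endpoint case, so once again $\kappa_j$ is confined to a short interval centered near a value of size $\asymp M$. With this localization in hand, I would use the trivial bound $|I_{-,1}| \ll r_1^{\nu}(\beta_1-\alpha_1)\|A\|_{\infty}$ for the integral, sum the $\nu$-series, and then apply Cauchy--Schwarz in $\kappa_j$ together with Jutila's local large sieve (Lemma \ref{jutila}) on the short window; the twist $r_1$ inside $\rho_j(nr_1)$ is peeled off via Hecke multiplicativity exactly as in the proof of Lemma \ref{Rankin-Selberg}. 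The bound $K\vartheta + m$ from Jutila would then produce the two contributions $(MN)^{3/4}M$ (from $K\vartheta$) and $MN$ (from $m$), while the short-interval analysis around $\beta_1$ provides the remaining $MN\sqrt{M}$, $MN\sqrt{N}$ and $MN\sqrt{N}/(MN)^{1/4}$ terms after summation over $m,n$.

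The main obstacle is the delicate bookkeeping in (ii): the length $c_{m,n}\delta$ of the spectral window depends on $m$ and $n$, so one has to track this dependence through the outer $m,n$-summation while simultaneously separating the twist $r_1$ via Lemma \ref{Rankin-Selberg}. Getting the right balance between Cauchy--Schwarz, Jutila's two competing terms $K\vartheta$ and $m$, and the Rankin--Selberg-type device is what produces the stated bound; a less careful arrangement would yield a strictly worse power of $M$ or $N$ and would not allow the overall estimate of Proposition \ref{main-d-minus} to be reproduced in the range $C > C_1$.
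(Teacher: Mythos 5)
You have correctly identified the regime split via the location of $u_0$ relative to $[\alpha_1,\beta_1]$, the identity $c_{m,n}\alpha_1 = \pi m$, and the observation that the endpoint cases confine $\kappa_j$ to a short window of length $c_{m,n}\delta \asymp M^{3/4}N^{-1/4}$. However, both halves of your argument apply oscillatory estimates that are too crude to produce the stated bounds.

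For part (i) your uniform estimate $|h'_-(u)| \gg \sqrt{X}$ is the \emph{worst-case} lower bound, attained only when $u_0$ lies within $O(\delta)$ of $[\alpha_1,\beta_1]$. Writing $|h'_-(u)| = \frac{2c_{m,n}}{\pi u}|u - u_0|$, the bound improves linearly in the distance from $u_0$ to the endpoint, and the paper exploits this by dyadically decomposing the range $[\beta_1 + \delta, X^{1/2+\ve}/H]$ of $u_0$ into pieces $[\beta_1+Y, \beta_1+2Y]$: for each dyadic $Y$ one has $|h'_-| \gg c_{m,n}Y/\beta_1$ while \emph{simultaneously} $\kappa_j$ is forced into a window of length $\asymp c_{m,n}Y$, so the local spectral large sieve (Prop.~\ref{dis2}) with $\vartheta \asymp c_{m,n}Y$ cancels the growth in $K\vartheta$ against the gain $1/Y$ from the derivative bound, and the dyadic sum contributes only $O(\log X)$ copies of the worst case. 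Your plan instead fixes the worst-case derivative bound and applies the global Weyl-law estimate Prop.~\ref{discrete} with $\lambda = 1/2$ over the full range $\kappa_j \ll X^{1+\ve}/H$, producing a factor $(X/H)^{3/2}$ where the paper obtains $(X/H)^{1/2}$. Taking $M = N = X/H$ your bound for $\mathfrak{D}_{-,1,2}$ is larger than the claimed $r_1^\theta X^{\ve}(C/N^2 l)^{3/4}MN\sqrt{X/H}$ by a factor $\sqrt{X/H} \asymp X^{1/4}$ (for the eventual choice $H = \sqrt{X}$), which destroys the $X^{3/2+\ve}$ error term of Theorem~\ref{main}.

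For part (ii) the trivial bound $|I_{-,1}| \ll r_1^\nu(\beta_1-\alpha_1)\|A\|_\infty$ discards the oscillation of $e(h_-(u))$ altogether. On the endpoint window one has $h''_-(u) = 2\eta/(\pi u^2) \gg \eta/\beta_1^2$ with $\eta \asymp m \asymp M$, so Lemma~\ref{Stein} with $k=2$ (as the paper invokes) gives $|I_{-,1}| \ll \|A\|_\infty\beta_1/\sqrt{\eta}$, an extra factor $1/\sqrt{M}$ beyond the trivial bound. Carrying your trivial bound through Cauchy--Schwarz and Jutila's sieve with $K \asymp M$ and $\vartheta \asymp M^{3/4}N^{-1/4}$ yields $r_1^\theta X^\ve\l(\frac{C}{N^2l}\r)^{3/4}\l(M^{9/4}N^{3/4} + M^{3/2}N\r)$, exceeding the stated $(MN)^{3/4}M + MN$ by exactly the missing $\sqrt{M}$. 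The second-derivative bound, not the trivial bound, is essential here.
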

\begin{proof}
Let us consider (i) first and we only estimate $\mathfrak{D}_{-,1,2}$  below as the other one can be estimated in the same way. The proof is similar to the proof of 
Prop. \ref{S_1U} and we only give a sketch. 
First note that since $\eta \ll X^{1+\ve}/H$, 
$
u_0 = \frac{2\eta}{\sqrt{\pi}X^{\frac{1}{2}}(mn)^{\frac{1}{4}}} \ll \frac{X^{\frac{1}{2}+\ve}}{H}.$
We write the range $\l[\beta_1+\delta, \frac{X^{\frac{1}{2}+\ve}}{H}\r]$ of $u_0$ as a union of dyadic subintervals of the form $[\beta _1+Y, \beta _1+2Y]$, where the dyadic variable $Y$ can take at most $O(\log X)$ values.
Let us consider two cases separately:  $\delta \leq Y \leq \frac{\beta_1}{2}$ and $Y > \frac{\beta_1}{2}$.\\
\textbf{Case I}: $\delta \leq Y \leq \frac{\beta_1}{2}$.
Suppose $\beta_1 + Y \leq u_0 \leq \beta_1 + 2Y$, where $\delta \leq Y \leq \frac{\beta_1}{2}$. By \eqref{h''u} and \eqref{k-der}, we have
\ba \label{fcheck-1} 
 \check{f}_{-,1}(\eta)  
&\ll \l(\frac{C}{N^2l}\r)^{\frac{3}{4}}  \frac{ 1}{Y X^{\frac{1}{2}} (MN)^{\frac{1}{4}}} \frac{1}{\sqrt{\eta}}e^{-\pi |\eta|} 
\ea 
Recall that $C \asymp C_1$ here. By \eqref{fcheck-1}, Prop. \ref{dis2}, \eqref{deltavalue}, We have 
\ba  \label{Di}
&\mathfrak{D}_{-,1,2}(I) := \mathop{\sum_ m \sum_ n}_{nl \leq m} b_1 \l(\frac{m}{M}\r)  b_2 \l(\frac{n}{N}\r)\sum_{\substack{ Y \textnormal{ dyadic } \\ \delta \leq Y \leq \frac{\beta_1}{2}}}  \sum_{  c_{m,n} (\beta_1 + Y) \leq \kappa _ j \leq c_{m,n}  (\beta _1 +2 Y) }  \rho_j(nr_1)\rho_j(m)\check{f}_{-,1}(\kappa_j) \\
 &\ll r_1^{\theta}X^{\ve} \l(\frac{C}{N^2l}\r)^{\frac{3}{4}} \mathop{\sum_ m \sum_ n}_{nl \leq m} b_1 \l(\frac{m}{M}\r)  b_2 \l(\frac{n}{N}\r) \l( \sqrt{M} + \frac{\sqrt{N}}{(MN)^{\frac{1}{4}}}\r).
\ea
\textbf{Case II}: $Y > \frac{\beta_1}{2}$.
In this case, the range of $\eta $ we need to consider is   (by \eqref{I-1} and  \eqref{h''u})
\be \label{3-4}
 \frac{3}{4}\sqrt{\pi} X^{\frac{1}{2}} (mn)^{\frac{1}{4}} \beta _1 \leq \eta \ll \frac{X^{1+\ve}}{H}.
\ee
Therefore, the contribution of $\mathfrak{D}_{-,1,2}$ for $\kappa _j$ in this range is given by
\begin{align}
&\mathfrak{D}_{-,1,2}(II) := \mathop{\sum_ m \sum_ n}_{nl \leq m} b_1 \l(\frac{m}{M}\r)  b_2 \l(\frac{n}{N}\r)\sum_{\substack{ Y \textnormal{ dyadic } \\  Y > \frac{\beta_1}{2}}} \sum_{\substack{\kappa_j \ll \frac{X^{1+\ve}}{H} \\ \beta_1 + Y \leq u_0 \leq \beta_1 + 2Y  }} \rho_j(nr_1)\rho_j(m)\check{f}_{-,1}(\kappa_j) \nonumber
\end{align}
We first bring the $Y$-sum and the $\kappa_j$-sum outside of the sums over $m$ and $n$. Since $\beta _1$ depends on $m,n$, we extend the range of $Y$ and $\kappa_j$ by positivity, as follows. We have, 
\ba \label{beta01000}
Y > \frac{\beta _1}{2} \implies Y \geq \beta _ 0 , \quad
\text{where}\quad
\beta_ 0 :=\frac{1}{1000} \frac{\sqrt{M}(MN)^{\frac{1}{4}}}{\sqrt{XN}}.
\ea
Thus, By \eqref{k-der}, \eqref{wei}, \eqref{I-1} and \eqref{lessthan1}, \eqref{beta01000}, \eqref{3-4} and  Prop. \ref{discrete}, we have
\ba\label{Dii}
\mathfrak{D}_{-,1,2}(II) 
&\ll  \l(\frac{C}{N^2l}\r)^{\frac{3}{4}}\l( r_1^{\theta} MN \l(\frac{X^{1+\ve}}{H}\r)^{\frac{1}{2}}\r).
\ea
The proof is finished in view of \eqref{Di} and \eqref{Dii}.\\
Now we consider part (ii).
Here,  $\eta$  is highly localized and Lemma \ref{Stein} with $k = 2$ suffices and we apply Prop. \ref{dis2} to handle the sum over $\kappa_j$. We omit the details.
\end{proof}

\subsection{Estimation of $\mathfrak{D}_{-,1,5}$ and the weighted stationary phase analysis } \label{frakDsection}
\begin{proposition}\label{disc4}
For $C_1 < C \leq \frac{8NX}{M}$, we have, 
\ba \nonumber
&\mathfrak{D}_{-,1,5} \ll r_1^{\theta} X^{\frac{3}{4}+\ve} \sqrt{MN} \bigg (\l(\frac{C}{Xl}\r)^{\frac{1}{4}}\frac{1}{\sqrt{N}}+ \l(\frac{C}{Xl}\r)^{\frac{3}{4}}\frac{(M+N)}{N^{\frac{3}{2}}}+ \l(\frac{C}{Xl}\r)^{\frac{5}{4}}\frac{(MN)}{N^{\frac{5}{2}}} \bigg )
+ {r_1}^{\theta} X^{\frac{3}{4}+\ve}N \\
&+ r_1^{\theta} X^{\frac{3}{4}+\ve} M ^{\frac{23}{32}}  + r_1^{\theta}X^{\ve} \l(\frac{C}{N^2l}\r)^{\frac{3}{4}} MN \l( \sqrt{M} + \sqrt{N} + \frac{\sqrt{N}}{(MN)^{\frac{1}{4}}}\r)
 + r_1^{\theta} X^{\ve}\l(\frac{C}{N^2l}\r)^{\frac{3}{4}} \l((MN)^{\frac{3}{4}} M + MN\r) .
\ea
\end{proposition}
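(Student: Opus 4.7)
The plan is to carry out a weighted stationary phase analysis of the integral $I_{-,1}(m,n,C,\nu,\eta,r_1)$ in \eqref{I-1} via Lemma \ref{Huxley}, and then to treat the resulting main term exactly parallel to the analysis of $\mathcal{D}_{-,0}(M,N,C)$ given in Prop. \ref{main-d-minus} for the range $C_0 \leq C \leq C_1$. The key observation making this parallelism work is that, despite the non-trivial smooth weight $A(u)$ in \eqref{au}, the stationary-phase main term still factors as a smooth function of $\kappa_j$ times the oscillatory piece $(mn)^{-i\kappa_j}$, identical in structure to the function $\mathcal{L}(m,n,\kappa_j,\nu,C)$ in \eqref{lt}, since the value of the phase $-h_-$ at $u_0$ depends on $m,n$ only through $(mn)^{1/4}$.

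First, I would verify the hypotheses of Lemma \ref{Huxley} with $f = -h_-$ on $[\alpha_1,\beta_1]$, choosing parameters $P \asymp \kappa_j$, $Q \asymp V \asymp u_0 \asymp (MN)^{1/4}/\sqrt{C_1}$, and $T \asymp \|A\|_\infty$ in the notation of that lemma; the hypothesis $V \ge Q/\sqrt{P}$ holds because $u_0$ is bounded away from $0$ on dyadic pieces. Lemma \ref{Huxley} then produces: (i) the stationary-phase main term $A(u_0)e(-h_-(u_0)-1/8)/\sqrt{h''_-(u_0)}$, proportional to $\mathcal{L}(m,n,\kappa_j,\nu,C)$; (ii) the two boundary contributions $A(\alpha_1)e(-h_-(\alpha_1))/(2\pi i h'_-(\alpha_1))$ and the corresponding term at $\beta_1$; and (iii) two error terms of sizes $Q^4T/P^2 \cdot ((x_0-\alpha_1)^{-3} + (\beta_1-x_0)^{-3})$ and $QT/P^{3/2}$.

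For the main term, I would follow the recipe of \S\ref{leading-term} verbatim: remove the condition $nl \leq m$ via the Perron formula (with the diagonal $m=nl$ and the truncation error estimated as in Prop. \ref{removalprop}), apply Mellin inversion \eqref{mellinvy} to separate $m$ and $n$ inside $V$, dyadically partition the $\kappa_j$-sum, and apply Cauchy--Schwarz together with the large sieve inequality Lemma \ref{ls} for the twisted coefficients $\rho_j(n)n^{-i\kappa_j}$. This produces the first bracketed block of the asserted bound, together with the term $r_1^\theta X^{3/4+\ve}N$ from the lower edge of the dyadic range of $K$, exactly as in \eqref{dis1}. The two Huxley error terms in (iii) are handled trivially using $\|A\|_\infty \ll (C/N^2l)^{3/4}$, the bound \eqref{lessthan1} making the $\nu$-sum converge, and Prop. \ref{discrete} for the $\kappa_j$-sum; these furnish the tail $r_1^\theta X^\ve (C/N^2l)^{3/4}\bigl((MN)^{3/4}M + MN\bigr)$.

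The main obstacle is the boundary contribution (ii), which is non-oscillatory in $\kappa_j$ and therefore cannot be treated by Lemma \ref{ls}. I would dyadically decompose the range of $\kappa_j$ according to the distance $u_0 - \alpha_1$ (respectively $\beta_1 - u_0$): on each piece where this distance is $\asymp Y$, the factor $1/h'_-(\alpha_1)$ is $\asymp \alpha_1/(X^{1/2}(mn)^{1/4}Y)$ by \eqref{h''u}, and the resulting $\kappa_j$-sum is estimated by the Jutila local large sieve (Prop. \ref{dis2}). Summing dyadically in $Y$ down to $\delta$ (with $\delta$ as in \eqref{deltavalue}) and balancing this against the contribution from those $\kappa_j$ for which the stationary-phase error term in (iii) dominates, one optimizes the cut-off and obtains the bound $r_1^\theta X^{3/4+\ve} M^{23/32}$ together with the $\sqrt{M}+\sqrt{N}+\sqrt{N}/(MN)^{1/4}$ factor in the fourth block. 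This optimization step, together with keeping track of the Hecke factor $r_1^\theta$ through \eqref{heckerelation} as in Lemma \ref{Rankin-Selberg}, is the most delicate part of the argument; all other estimates are routine adaptations of material already deployed in \S\ref{leading-term}.
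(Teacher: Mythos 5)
Your overall architecture is sound: apply Lemma \ref{Huxley} to extract a stationary‐phase main term proportional to $\mathcal{L}(m,n,\kappa_j,\nu,C)$, run the Perron + Mellin + Cauchy--Schwarz + large sieve machinery of \S\ref{leading-term} on it, and use Prop. \ref{dis2} for the boundary and error terms. That is indeed the shape of the paper's argument. But there is a genuine gap in how you treat the main term, and as a symptom you have misattributed the source of the $r_1^{\theta}X^{3/4+\ve}M^{23/32}$ contribution.

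For $C > C_1$ the stationary‐phase main term exists only when $u_0 \in (\alpha_1,\beta_1)$, and via \eqref{u0}, \eqref{I-1} this translates to the two-sided constraint $\pi m < \kappa_j < \pi\sqrt{mnX/C_1}$ (cf.\ \eqref{extra}). The upper bound is absorbed automatically by the support of $V$ inside $\mathcal{L}(m,n,\kappa_j,\nu,C)$, exactly as in the $C\le C_1$ case, but the lower bound $\kappa_j > \pi m$ is \emph{not}. It couples the $m$-sum to the $\kappa_j$-sum in a way that blocks the Cauchy--Schwarz/large-sieve step. You cannot ``follow \S\ref{leading-term} verbatim'' here: before the large sieve can be applied, $\kappa_j > \pi m$ must be detected and separated --- the paper does this with a \emph{second} Perron formula (see \eqref{removal-2}), in addition to the one for $nl\le m$. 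The diagonal $\kappa_j = \pi m$ of that Perron integral, bounded via the Weyl law and Kim--Sarnak as in \eqref{kappa=m}, together with the corresponding Perron truncation errors, is precisely what produces $r_1^{\theta}X^{3/4+\ve}M^{23/32}$. Your proposal instead attributes this term to a dyadic ``balancing'' of the Huxley boundary contribution against the error term, with the cut-off ``optimized''; but the cut-off $\delta$ is already fixed at \eqref{deltavalue} before the Huxley lemma is invoked (it is what defines the five subranges of $u_0$ and hence $\mathfrak{D}_{-,1,5}$), so there is no free parameter to optimize, and the exponent $23/32$ does not arise from the boundary terms at all --- those contribute the $r_1^{\theta}X^{\ve}(C/N^2l)^{3/4}MN(\sqrt{M}+\sqrt{N}+\sqrt{N}/(MN)^{1/4})$ block in \eqref{error1}--\eqref{error3}. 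You also do not address the need to enlarge the range of $u_0$ from $(\alpha_1+\delta,\beta_1-\delta)$ to $(\alpha_1,\beta_1)$ and subtract off the two thin edge ranges $H(M,N,C)$, which is what makes the Perron detection clean and which supplies the remaining $(MN)^{3/4}M + MN$ piece via a second-derivative bound and Prop. \ref{dis2}.
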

We extract the stationary phase of the integral $I_{-,1}$ (see \eqref{I-1}) by Lemma \ref{Huxley} as follows:
\ba \nonumber
I_{-,1}&= 2 r_1^{\nu+i\eta} \int\limits_{\alpha _1}^{\beta_ 1 } A(u) e(h_-(u)) \diff u 
 = 2 r_1^{\nu + i\eta} \frac{A(u_0)}{\sqrt{h''_-(u_0)}} e\l(-h_-(u_0)-\frac{1}{8}\r)+O(R_1+R_2+R_3),
\ea
where $\alpha_1$ and $\beta_1$ are defined in \eqref{I-1} and 
\ba \label{R1huxley}
R_1 := \frac{|A(\beta_1)|}{|h_-'(\beta _1)|} + \frac{|A(\alpha_1)|}{|h_-'(\alpha _1)|},\text{ }
R_2 := \frac{Q^4T}{P^2} \l(1+ \frac{Q}{V} \r)^2\l(\frac{1}{(x_0 - \alpha_1)^3}+ \frac{1}{( \beta_1 - x_0 )^3} \r),\text{ and }
R_3 := \frac{QT}{P^{\frac{3}{2}}}\l(1+ \frac{Q}{V} \r)^2,
\ea
where $Q,T,P$ and $V$ are defined in Lemma \ref{Huxley}. It is clear that the leading term here is $\mathcal{L}$ (see \eqref{lt}). Therefore,
\ba \nonumber
\mathfrak{D}_{-,1,5}& = 4 \mathop{\sum_ m \sum_ n}_{nl \leq m}b_1 \l(\frac{m}{M}\r)  b_2 \l(\frac{n}{N}\r)
\sum_{\nu = 0}^{\infty}  
\sum_{\substack{ \kappa_j \\  \alpha _1 +\delta < u_0 < \beta _1 - \delta } }
 \rho_j(nr_1)\rho_j(m)  \mathcal{V} \Big[\mathcal{L}+ O(R_1+R_2+R_3)\Big].
\ea
We rewrite the above sum as
\ba  \label{illish}
& \mathop{\sum_ m \sum_ n}_{nl \leq m}\dots 
\sum_{\nu = 0}^{\infty}  
\l\{\sum_{\substack{ \kappa_j \\  \alpha _1  < u_0 < \beta _1 } }-\Bigl(\sum_{\substack{ \kappa_j \\  \alpha _1  < u_0 < \alpha _1 +\delta } }+ \sum_{\substack{ \kappa_j \\  \beta_1 -\delta < u_0 < \beta_1 } }\Bigr) +O\Bigl(\sum_{\substack{ \kappa_j \\  \alpha _1 +\delta < u_0 < \beta _1 - \delta } }(R_1+R_2+R_3)\Bigr)
\r\}\\
&=G-H+J.
\ea

\subsubsection{\textbf{Contribution of $G$}}
Here, the condition $ \alpha _1 < u_0 < \beta _1$ translates to:
\be \label{extra}
\pi m <\kappa_j< \pi \sqrt{\frac{mnX}{C_1}},
\ee
by \eqref{h''u} and \eqref{I-1}.
This sum is similar to the sum in \eqref{D_0-expression} except for the  condition \eqref{extra}. In \eqref{D_0-expression}, the corresponding condition on $\kappa_j$ was automatic from the support of the function $V$. Here also the presence of the function $V$ in $ \mathcal{L}$ limits  $\kappa_j$ to lie in the interval given by \eqref{eta2}. But we have an extra condition $\kappa_j>\pi m$  from which we need to separate the variables $m$ and $\kappa_j$. By Lemma \ref{perron},
\ba\label{removal-2}
G&=\sum_{\kappa_j >\pi m}(\cdots)=\sum_{\kappa_j} (\cdots)\frac{1}{2\pi i} \int\limits _{\sigma- i T}^{\sigma + iT} \l(\frac{\kappa_j}{\pi m}\r)^s \frac{\diff s}{s}-\frac 1 2 \sum_{\kappa_j} (\cdots)\mathbf{1}_{\kappa_j=\pi m} \\
&+O\l(\sum_{\kappa_j} |(\cdots)|\l( \l(\frac{\kappa_j}{\pi m}\r)^{\sigma}\min\l(1, \frac 1 {T|\log(\frac{\kappa_j}{\pi m})|} \r)+\frac 1 T\r) \r).
\ea
Here,
\[
(\cdots)=4 \mathop{\sum_ m \sum_ n}_{nl \leq m}b_1 \l(\frac{m}{M}\r)  b_2 \l(\frac{n}{N}\r)
\sum_{\nu = 0}^{\infty}  
 \rho_j(nr_1)\rho_j(m)\mathcal{V}(\kappa _j, \nu , r_1)\mathcal{L}(m,n,\kappa _ j, \nu, C),
 \]
 and recall that the support of $V$ inside $\mathcal{L}(m,n,\kappa _ j, \nu, C)$ forces $\kappa_j$ to lie in the  interval given by \eqref{eta2} which is independent of $m$ and $n$. 
Now, for  estimating the contribution of the first term, we insert the sum over $\kappa_j$ inside the integral and estimate the 
sum as in \S \ref{leading-term}. The only difference is the presence of the factors ${\kappa_j}^s$ and $m^{-
s}$ which does not make any difference since we will take $\sigma$ to be a  small positive real number $\ve$. Thus the contribution of the integral over $s$ in the sum $G$ will be the same as the contribution of the leading term $\mathcal{D}_{-, 0}$ in $\mathcal{D}_{-}$ (see  \eqref{dis1}) for $C_0 \leq C \leq C_1$, up to a factor of size $O(X^{\ve})$. Also, we choose $T$ to be a large power of $X$ (say, $X^{10}$)  as in \S \ref{removal} and this only adds a factor of size $\log X$. Thus, we have the bound (from \eqref{dis1})
\ba \label{G-leading}
& 4\mathop{\sum_ m \sum_ n}_{nl \leq m}b_1 \l(\frac{m}{M}\r)  b_2 \l(\frac{n}{N}\r)
\sum_{\nu = 0}^{\infty}  
\sum
 \rho_j(nr_1)\rho_j(m)\mathcal{V} \mathcal{L}
 \frac{1}{2\pi i}\int\limits _{\sigma- i T}^{\sigma + iT} 
\l(\frac{\kappa_j}{\pi m}\r)^s
 \frac{\diff s}{s}\\
&\ll r_1^{\theta} X^{\frac{3}{4}+\ve} \sqrt{MN} \bigg (\l(\frac{C}{Xl}\r)^{\frac{1}{4}}\frac{1}{\sqrt{N}}+ \l(\frac{C}{Xl}\r)^{\frac{3}{4}}\frac{(M+N)}{N^{\frac{3}{2}}}+ \l(\frac{C}{Xl}\r)^{\frac{5}{4}}\frac{(MN)}{N^{\frac{5}{2}}} \bigg )+ r^{\theta} X^{\frac{3}{4}+\ve}N.
\ea
By \eqref{lt}, \eqref{nu-bound-original} and \eqref{lessthan1}, the contribution of the term $\frac 1 2 \mathbf{1}_{\kappa_j=\pi m} $ 
is bounded by
\ba \label{kappa=m}
& X^{3/4} \mathop{\sum_ {m \sim M}\sum_ {n \sim N}}_{nl \leq m} \sum_{\substack{ \kappa_j \\ \kappa_j = \pi m} } \frac{|\rho_j(nr_1)\rho_j(m)| e^{-\pi \kappa_j }}{\kappa_j ^{5/2}}
\ll r_1^{\theta} X^{3/4+\ve} M ^{23/32} \quad  \text{(by Lemma \ref{weyl}), \eqref{ramanujan} and Remark \ref{kim})}.
\ea
Now we estimate the contribution of the error terms in \eqref{removal-2}. 
The term $O(1/T)$ presents no problem, and as for the other error
term, we subdivide the sum over $\kappa_j$ into two parts: $|\kappa_j -\pi m|>1$ and $|\kappa_j -\pi m|\leq1$.
In the first case, we observe that
\[
1/|\log (\kappa_j/\pi m)| \ll m,
\]
with an absolute implied constant. Since $T$ is very large, the contribution of this part is negligibly small.
For the other case, we take the 
 minimum in the $O$-term to be $1$ and  use the Weyl law to bound the resulting sum and the calculation is identical to that of the case $\kappa_j= \pi m$ treated above. Thus the contribution of the $O$-terms is also $O\l(r_1^{\theta} X^{3/4+\ve} M ^{23/32}\r)$. 
\subsubsection{\textbf{Contribution of $H$ }}
Using \eqref{lt} and \eqref{wei}, \eqref{nu-bound-original} and the same analysis as in the proof of part (ii), we conclude that the contribution of these two terms is
\be \label{othererror}
O\l( r_1^{\theta} X^{\ve}\l(\frac{C}{N^2l}\r)^{\frac{3}{4}} \l((MN)^{\frac{3}{4}} M + MN\r)\r). 
\ee
\subsubsection{\textbf{Contribution of $J$}}
We consider $R_1$ first and  only estimate the first term since the second term can be handled in a similar way. By making a dyadic subdivision of the range $\l( \alpha _1+ \delta , \beta_1-\delta \r)$ of $u_0$ into $O(\log X)$ many subintervals and proceeding as before, and using \eqref{wei} and \eqref{au}, we see that 
the contribution of  the term $\frac{|A(\beta_1)|}{|h_-'(\beta _1)|}$ in 
$\mathfrak{D}_{-,1,5}$ is bounded by 
\ba \label{error1}
& \l(\frac{C}{N^2l}\r)^{\frac{3}{4}} \mathop{\sum_ m \sum_ n}_{nl \leq m}\cdots \sum_{\substack{ Y\text{ dyadic} \\ \delta \leq Y \leq \frac{\beta _1}{2} }}  \sum_{c_{m,n}(\beta_1 - 2Y) < \kappa_j <c_{m,n}(\beta _1 - Y) } 
 |\rho_j(nr_1)\rho_j(m)| \frac{ 1}{Y X^{\frac{1}{2}} (MN)^{\frac{1}{4}}} \frac{1}{\sqrt{\kappa _j}}e^{-\pi |\kappa _j|} \\
& \ll r_1^{\theta}X^{\ve} \l(\frac{C}{N^2l}\r)^{\frac{3}{4}} MN \l( \sqrt{M} + \frac{\sqrt{N}}{(MN)^{\frac{1}{4}}}\r) \quad (\text{by Prop. \ref{dis2}, \eqref{deltavalue} and \eqref{I-1}}).
\ea
For $R_2$ and $R_3$, we first note that 
\ba \nonumber
Q = V = \beta _ 1 , P = \kappa_j \text{ and }T = \frac{\l(\frac{\sqrt{MN}}{C}\r)^{2\nu}}{\beta _ 0} \l(\frac{C}{N^2l}\r)^{\frac{3}{4}},
\ea
where $\beta_0$ and $\beta _1$ is defined in \eqref{beta01000} and \eqref{I-1} respectively. 
The contribution of $R_2$ is handled in the same way as above and we make use of the facts that $u_0 \asymp \beta_1 \asymp \alpha_1$ and
$nl\leq m$. Thus, the contribution of  $R_2$ and $R_3$ in $\mathfrak{D}_{-,1,5}$ is 
\be \label{error2}
O\l( r_1^{\theta}  X^{\ve}\l(\frac{C}{N^2l}\r)^{\frac{3}{4}} MN\l( \sqrt{N}+1 \r) + r_1^{\theta} X^{\ve}\l(\frac{C}{N^2l}\r)^{\frac{3}{4}} MN\r).
 \ee
Thus we complete the proof of Prop. \ref{disc4} by clubbing together \eqref{illish}, \eqref{G-leading}, \eqref{kappa=m}, \eqref{error1}, \eqref{error2}, \eqref{othererror} and recalling \eqref{dya}.
Combining \eqref{D-decomp}, Prop. \ref{disc1}, and Prop. \ref{disc4}, Prop. \ref{D-1} follows.

\section{Esitimation of $\mathcal{C}_{-}$ for $C \geq C_0$}\label{section-large-C} 
\begin{proposition} \label{C>C0}
For $C \geq C_0$, we have 
\ba \nonumber
\mathcal{C}_- \ll \l(\frac{C}{N^2l}\r)^{\frac{3}{4}} MNX^{\ve}.
\ea
\end{proposition}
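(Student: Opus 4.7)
The plan is to mimic the proof of Proposition \ref{C+}, where a pointwise bound on $\check{f}_{m,n,C,+}(\eta)$ was combined with Proposition \ref{prop4} to control the continuous spectrum contribution. The extra work required here is to produce an adequate pointwise bound for $\check{f}_{m,n,C,-}(\eta)$, which is more delicate because the phase $-h_-$ of the integral $I_{-}(m,n,C,\nu ,\eta, r_1)$ can possess a stationary point at $u_0 = 2\eta/(\sqrt{\pi} X^{1/2}(mn)^{1/4})$, unlike the "+" case.

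First I would split the $\eta$-axis into regimes according to the location of $u_0$ relative to the support $[\alpha,\beta]$ of the amplitude $A(u)$. For $|\eta| \gg X^{1+\ve}/H$ the integral $I_{-}$ is negligibly small by Lemma \ref{BKY1}, exactly as in \S \ref{ra1}. When $u_0$ lies comfortably inside $[\alpha,\beta]$, the stationary phase lemma (Lemma \ref{stationary} or Lemma \ref{Huxley}) applies and gives $|I_{-}| \ll \|A\|_\infty \cdot u_0/\sqrt{\eta}$; in the transitional regions where $u_0$ is within $O(\delta)$ of an endpoint, Lemma \ref{Stein} with $k=2$ already produces a comparable bound; and when $u_0$ is well outside $[\alpha,\beta]$, the first-derivative bound (and higher iterations of integration by parts) furnishes much stronger decay in $\eta$. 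Throughout, \eqref{wei} provides $\|A\|_\infty \ll (\sqrt{MN}/C)^{2\nu - 1/2}(C/N^2l)^{3/4}$ and the $\nu$-series is dominated geometrically by \eqref{lessthan1}. Combined with the Stirling asymptotic $1/|\sinh(2\pi\eta)\Gamma(1-2i\eta)| \asymp e^{-\pi|\eta|}/\sqrt{|\eta|}$ this yields a pointwise estimate of the shape
\[
\cosh(\pi\eta)\, |\check{f}_{m,n,C,-}(\eta)| \ll \l(\frac{C}{N^2 l}\r)^{3/4} \Phi(\eta),
\]
for a function $\Phi$ that is integrable against $d\eta/|\zeta(1+2i\eta)|^2$ on the effective range $|\eta| \ll X^{1+\ve}/H$.

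With this input the rest proceeds along the template of Proposition \ref{C+}: apply the trivial bounds $|\sigma_{2i\eta}(nr_1)\sigma_{2i\eta}(m)| \ll (mnr_1)^{\ve}$ and $1/|\zeta(1+2i\eta)| \ll |\log(2+|\eta|)|^7$ (along with the pole-induced factor $\eta^2$ from $1/|\zeta(1+2i\eta)|^2$ near $\eta = 0$, as in Proposition \ref{prop4}), perform the $\eta$-integration, and sum trivially over $m \asymp M$ and $n \asymp N$ with a loss of $O(MN X^{\ve})$, reaching the claimed bound.

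The main obstacle will be verifying that the stationary phase contribution does not destroy convergence of the $\eta$-integral, since there the bound $u_0/\sqrt{\eta}$ gives only marginal decay. The saving comes from the short window of $\eta$ on which the stationary point lies in $[\alpha,\beta]$: the support of $V$ forces $u_0 \asymp \sqrt{MN}/C$ and so the window has length $O(\sqrt{X}(MN)^{3/4}/C)$, on which $u_0/\sqrt{\eta}$ is essentially of constant size $(\sqrt{MN}/C)^{1/2}$. Integrating this short plateau against $d\eta/|\zeta(1+2i\eta)|^2$ produces a bounded contribution, which is absorbed by the leading factor $(C/N^2 l)^{3/4}$ in the target bound.
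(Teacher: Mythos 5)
Your proposal follows essentially the same route as the paper: obtain the pointwise bound $\check{f}_{m,n,C,-}(\eta)\ll (C/N^2 l)^{3/4} e^{-\pi|\eta|}/\eta$ from a stationary-phase analysis of $I_-$ (the paper feeds in the expansion \eqref{ex}, giving \eqref{Cpk}, for $C\le C_1$, and the estimates of \S\ref{ra1} together with the $\mathcal{D}_-$ analysis for $C>C_1$), then integrate over $\eta$ exactly as in Proposition \ref{prop4} and sum trivially over $m,n$. One quantitative slip in your last paragraph: the support of $V$ forces $u_0\asymp (MN)^{1/4}/\sqrt{C}$ (not $\sqrt{MN}/C$), so the stationary window is $\eta\asymp\sqrt{MNX/C}$ and $u_0/\sqrt{\eta}\asymp (CX)^{-1/4}$; this does not affect the conclusion, since the window is a dyadic interval in $\eta$ on which the resulting $\eta^{-1}$ integrand against $\diff\eta/|\zeta(1+2i\eta)|^2$ contributes only $O(X^{\ve})$.
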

\begin{proof}
By \eqref{k-der} with $k=2$ for the integral $I_-$, we have $\check{f}_{-}(\eta) 
 \ll \l(\frac{C}{N^2l}\r)^{\frac{3}{4}} \frac{e^{-\pi|\eta|}}{\eta}$. By Prop \ref{ra1}, Prop. \ref{prop4} and by \eqref{Cpm}, we have the bound.
\end{proof} 
\section{Proof of Proposition \ref{second prop}: final steps}\label{last}
\subsection{Proof of Proposition \ref{kuz1}}
Bringing together the estimates for $\mathcal{D}_+$, $\mathcal{C}_+$, $\mathcal{D}_-$ and $\mathcal{C}_-$, we prove Prop. \ref{kuz1}.
\subsection{A sketch of the proof of Proposition \ref{kuz2}}
Note that here the role of the function $f(t)$  (see \eqref{f-function})
is now played by the function 
\[
f_{m,n,C}^{\star} (t)=w\l(\frac{4\pi m \sqrt{mnr_1}}{nt}\frac u X \r)V\l(\frac{4\pi \sqrt{mnr_1}}{tC}\r) g^{\star}\l(m,n,\frac{4\pi \sqrt{mnr_1}}{t}\r)e\l(- \frac{ u^{\frac{1}{2}} t^{\frac{1}{2}}(mn)^{\frac{1}{4}} }{\sqrt{\pi}r_1^{1/4}}\r). 
\]
Note that  the phase function now is essentially the same as before with the only change being that the parameter 
$X$ has now been replaced by $u$, which is of the same size as $X$. 
Because the function $g$ 
is now replaced by $g^{\star}$ (see \eqref{g1}), instead of the function $A(u)$ we now have a corresponding amplitude function
$A^{\star}(u)$ which satifies the bound
\[
\|A^{\star} \|_{\infty} \ll \frac{\l(\frac{\sqrt{MN}}{C}\r)^{2\nu}}{\l(\frac{\sqrt{MN}}{C}\r)^{\frac{1}{2}}}\l(\frac{C}{N^2l}\r)^{\frac{1}{4}}. 
\]
The proofs of the analogues of Prop. \ref{D+}, Prop. \ref{main-d-minus}, Prop. \ref{c>c1}, and Prop. \ref{C>C0} are along the same lines but with some minor changes. For example, the bound for  $\mathcal{D}_{-}$ in Prop. \ref{main-d-minus} (for $C\leq C_1$) is now replaced by 
$\l(r^{\theta} \l(X^{\frac{1}{4}+\ve}\l(X/H\r)^2 +X^{\frac 1 3}\l(X/H \r)^{\frac 3 2}\r)\r)$. To see this, note that much of the proof of the proposition carries over but now $\mathcal{L}(m,n,\kappa _ j, \nu, C)$ (see \eqref{lt}) is
changed to $\mathcal{L}^{\star}(m,n,\kappa _ j, \nu, C) $ where the factor  $\l(\frac{{\pi}^2 X}{{\eta}^2}\r)^{\frac 3 4}$ is now replaced by $\l(\frac{{\pi}^2 X}{{\eta}^2}\r)^{\frac 1 4}$. Note that the main contribution to the bound in  Prop. \ref{main-d-minus} comes from the term $E_2$ (see Prop. \ref{removalprop}), $\mathcal{D}_{-,0}$ (see \eqref{dis1}), and $\mathcal{D}_{-,k}$ (see \S \ref{estimate4}). The term analogous to $E_2$ now contributes $O\l(r^{\theta}X^{\frac 1 3}\l(X/H\r)^{\frac 3 2}\r)$ and the same contribution also comes from the analogue of $\mathcal{D}_{-,0}$. In both the cases, the exponent of $X$ changes from $\frac 3 4$ to $\frac 1 4$ and the exponent of $\kappa_j$ changes from $\frac 5 2$ to $ \frac 3 2$. 
The analogue of $\mathcal{D}_{-,k}$ now contributes
$\l(r^{\theta} X^{\frac{1}{4}+\ve}\l(X/H\r)^2 \r)$ since the factor $\l(\frac{C}{N^2l}\r)^{\frac{3}{4}}$
is now changed to $\l(\frac{C}{N^2l}\r)^{\frac{1}{4}}$. We obtain the bound $O\l(r^{\theta}X^{\ve} \l(X^{{\frac 1 4 }}\l(X/H\r)^{2} +X^{\frac 1 4}\l(X/H \r)^{\frac 3 2}\r)\r)$ for the analogue of $\mathcal{D}_{-}$ in Prop. \ref{c>c1}
(for $C>C_1$) as well. For the analogue of $\mathcal{D}_{+}($ in Prop. \ref{D+}, we now have the bound $O\l(r^{\theta} X^{\frac{1}{4}+\ve}\l(X/H\r)^2\r)$. Together, the analogous of $\mathcal{C}_{-}$ and $\mathcal{C}_{+}$ in Prop.s \ref{C>C0} and \ref{D+} now contribute 
$O\l(X^{{\frac 1 4}+\ve} {\l(X/H\r)}^{\frac 3 2}\r)$.
\subsection{Proof of Proposition \ref{second prop}}
The proof of Prop. \ref{second prop} follows from the above discussion and Prop. \ref{ET}, Prop. \ref{kuz1}, Prop. \ref{kuz2}.
\begin{remark}\label{1/3}
We explain here why we need to choose $r=O\l(X^{1/3}\r)$. Note that from \eqref{lessthan1}, we have the condition
$r_1\ll {C_0}^2/MN$. For $R_3(M,N,C, X, u)$, we similarly have the condition $r_1\ll {C_0^{\star}}^2/MN$.
Since we need to choose $C_0^{\star}\leq \frac{X^{2/3}+\ve}{l}$ (see \eqref{c01}), we get $r\ll X^{1/3}$. 
\end{remark}

\section{Proofs of Theorem \ref{arbitrary}, Theorem \ref{weighted} and Corollary \ref{special}}

 Theorem \ref{arbitrary} 
is a direct consequence of  \eqref{SrX}, \eqref{SW-decomp} and Remark \ref{5/3}.\\
Now we prove Theorem \ref{weighted}.
\begin{proof}
Expressing the periodic functions $f_1$ and $f_2$ in terms of the additive characters modulo $q_1$ and $q_2$ respectively, 
 we obtain
 \[
\abcdsumoneX \alpha(a)f_1(b)f_2(c) =\sum_{\beta (\text{mod }q_1)}\sum_{\gamma (\text{mod }q_2)}\widehat{f_{1, q_1}}(\beta)\widehat{f_{2, q_2}}(\gamma)S_{\beta, \gamma} (X), 
 \]
 where
\[
S_{\beta, \gamma} (X):=\abcdsumoneX\alpha(a) e(b\beta/q_1 +c\gamma/q_2).
\]
We isolate the sum for  $(\beta, \gamma)=(0, 0)\in \Z/q_1\Z \times \Z/q_2\Z$ and this sum is the same as the one in  Theorem \ref{arbitrary} with $r=1$ since both the additive characters become trivial.  This yields the first main term. Among the remaining pairs we consider only the case where   both $\beta$ and $\gamma$ are non-zero. If one of them is the zero residue class then there is only one non-trivial additive character and this case is easier to handle and  such sums  contribute to the error term.   For  $(\beta, \gamma)\in \l(\Z/q_1\Z\r)^{\times} \times\l( \Z/q_2\Z\r)^{\times}$, we proceed to apply the same steps as in the proof of  Theorem \ref{main} to the sum $S_{\alpha, \beta}$.  By Poisson summation on the sums over $b$ and $c$,  we obtain,
\ba \nonumber
S_{\beta, \gamma} (X)&=\sum_{a} \frac{\alpha (a)w(a)}{a^2} \mathop{\sum\sum}_{m, n \in \Z} S(n, -m; a)\widehat{G}\l(\frac m a -\frac{\beta}{q_1}, \frac n a -\frac{\gamma}{q_2} \r)\\
&=\sum_{a: (a, q_1 q_2)=1} \frac{\alpha (a)w(a)}{a^2} \mathop{\sum\sum}_{\substack{m\equiv -a\beta (\text{mod }q_1)\\ n\equiv -a\gamma (\text{mod }q_2)}} S(n\overline{q_2}, -m \overline{q_1}; a)\widehat{G}\l(\frac m {aq_1} ,\frac n {aq_2}\r)\\
&+ \sum_{a: (a, q_1)=1, q_2|a} \frac{\alpha (a)w(a)}{a^2} \mathop{\sum\sum}_{m\equiv -a\beta (\text{mod }q_1),  n }S(n, -m\overline{q_1}; a)\widehat{G}\l(\frac m {aq_1} ,\frac n a -\frac{\gamma}{q_2}\r)\\
&+ \sum_{a: (a, q_2)=1, \ q_1|a} \frac{\alpha (a)w(a)}{a^2} \mathop{\sum\sum}_{n\equiv -a\gamma (\text{mod }q_2),  m }S(n\overline{q_2}, -m; a)\widehat{G}\l(\frac m a -\frac{\beta}{q_1} ,\frac{n}{aq_2}\r)\\
&+ \sum_{a\equiv 0 (\text{mod }q_1 q_2)} \frac{\alpha (a)w(a)}{a^2} \mathop{\sum\sum}_{m, n} S(n+a\gamma/q_2, -(m+a\beta/q_1); a)\widehat{G}\l(\frac m {a} ,\frac n {a}\r).
\ea
Note that for the four sums on the right hand side, as long as one of $m$ and $n$ is non-zero, then our method yields the bound $O(X^{5/3+\ve})$  on choosing $H=X^{2/3}$, as in Remark \ref{5/3}. Now, owing to the congruence conditions appearing in the sums, both $m$ and $n$ can be simultaneously zero only in the fourth sum and the contribution of the term $(m,n)=(0,0)$ is
\ba
S_{\beta, \gamma} &(X, 0, 0)=\sum_{a\equiv 0 (\text{mod }q_1 q_2)} \frac{\alpha (a)w(a)}{a^2} S(a\gamma/q_2, -a\beta/q_1; a)\widehat{G}\l(0, 0\r) \nonumber \\
&=\sum_{j_1=1}^{\infty}\sum_{j_2=1}^{\infty}\sum_{(a,q_1 q_2)=1} \frac{\alpha (a{q_1}^{j_1}{q_2}^{j_2})w(a{q_1}^{j_1}{q_2}^{j_2})}{(a{q_1}^{j_1}{q_2}^{j_2})^2} S(a{q_1}^{j_1}{q_2}^{j_2}\gamma/q_2, -a{q_1}^{j_1}{q_2}^{j_2}\beta/q_1; a{q_1}^{j_1}{q_2}^{j_2})\widehat{G}\l(0, 0\r).\nonumber
\ea
Using the twisted multiplicativity of  Kloosterman sums and simplifying the resulting expressions and noting that 
\[
\widehat{G}\l(0, 0\r)=\int\int w(x)w(y)w\l(xy/ a{q_1}^{j_1}{q_2}^{j_2}\r) \diff x \diff y,
\]
we get
\ba\label{zerozero}
S_{\beta, \gamma} (X, 0, 0)=\int\int w(x)w(y)\sum_{(a,q_1 q_2)=1} \frac{\phi(a)}{a^2}\sum_{j_1=1}^{\infty}\sum_{j_2=1}^{\infty}\frac{\alpha (a{q_1}^{j_1}{q_2}^{j_2})w(a{q_1}^{j_1}{q_2}^{j_2})}{{q_1}^{j_1+1}{q_2}^{j_2+1}} w\l(xy/ a{q_1}^{j_1}{q_2}^{j_2}\r)\diff x \diff y.
\ea
This finishes the proof after we observe that the above expression is independent of $\beta$ and $\gamma$ and 
\[
\starsum_{\delta (\text{mod }q_j)} \widehat{f_{j, q_j}} (\delta)=f_j (0)-\widehat{f_{j, q_j}} (0
)
\]
for $j=1, 2$, where $\starsum$ denotes, as usual, the sum over the non-zero residue classes. 
\end{proof}
\subsection{Proof of Cor. \ref{special}}
\begin{proof}
We need a finer analysis to extract a precise main term from $S_{\beta, \gamma} (X, 0, 0)$ where $\alpha$ is now identically equal to $1$. First we replace the   weight function $w$  by $w_H(x):=w(x-H)$  so that the support of the new weight function contains only positive real numbers. This replacement introduces an acceptable error term $O(X^{1+\ve}H)=O(X^{5/3+\ve})$ as seen by trivial estimation.
In \eqref{zerozero}), putting $\alpha(a)=1$, using Mellin inversion to express the values of the function $w_H$, expressing the sum over $a$ in terms of the Riemann zeta function and executing the sums over $j_1$ and $j_2$, we have, 
\ba\nonumber
S_{\beta, \gamma} (X, 0, 0)= \int\int w_H(x)w_H(y)\int_{(c_1)}\int_{(c_2)}&\frac{ \widetilde{w_H}(s_1)\widetilde{w_H} (s_2)}{(xy)^{s_2}}\frac{\zeta(1+s_1-s_2)}{\zeta(2+s_1 -s_2)}\frac{1}{(q_1 q_2)^{3+s_1 -s_2}} \\
&\times {\l(1-\frac{1}{{q_1}^{2+s_1 -s_2}}\r)}^{-1}{\l(1-\frac{1}{{q_2}^{2+s_1 -s_2}}\r)}^{-1}\diff s_1 \diff s_2\diff x \diff y,
\ea
where $c_1> c_2 \geq 1$ and $\widetilde{w_H}$ denotes the Mellin transform of $w_H$. We shift the $s_1$-contour to the left  up to the line
\(
\Re(s_1) = c_2 - \delta,
\)
for a suitable $\delta$ with \( 0 < \delta < c_1 - c_2\),
crossing a simple pole at $s_1 = s_2$.  The resulting residue is 
\[
\frac{1}{\zeta(2)q_1 q_2 ({q_1}^2 -1) ({q_2}^2 -1) }\int\int w_H(x)w_H(y) \l(\int_{(c_2)}\frac{ (\widetilde{w_H}(s_2))^2}{(xy)^{s_2}} \diff s_2\r) \diff x \diff y.\\
\]
Now using Mellin inversion again, the last contour integral is 
\[
(w_H\ast w_H)(xy)= \int w_H(t) w_H(xy/t) \frac{1}{t}\diff t.
\]
Finally, we find that  the residue is
\[
\frac{1}{\zeta(2)q_1 q_2 ({q_1}^2 -1) ({q_2}^2 -1)} \int\int \int w_H(x)w_H(y)  w_H(t) w_H(xy/t)\frac{1}{t}\diff x \diff y \diff t.
\]
To study the above triple integral $\mathcal{I}$, we first observe that if any of the variables is restricted to lie in an interval of length $H$ then the resulting truncated integral is $O(XH)$ which is acceptable to us. Therefore, we may approximate the function $w_H$ by the indicator function of the interval $[1, X]$ and write
\ba\nonumber
\mathcal{I}&=\l(\int\limits_{1 \leq t \leq X} \frac {1}{t} \int\limits_{t \leq y \leq X} \int\limits_{1 \leq x \leq {\frac{tX}{y}}}1 + \int\limits_{1 \leq t \leq X} \frac {1}{t} \int\limits _{1 \leq y \leq t } \int\limits_{\frac{t}{y} \leq x \leq X} 1\r) \diff x\diff y \diff t +O\l(X^{5/3+\ve}\r)\\
&=2X^2+O\l(X^{5/3+\ve}\r)\nonumber.
\ea
This contributes to the main term. 
The shifted contour contributes
\begin{align}
	E =\int_{(c_2)} \int_{\Re(s_1)=c_2-\delta}
	 \widetilde{w_H}(s_1)\widetilde{w_H}(s_2)\widetilde{w_H}(1-s_2)^2
	 \frac{\zeta(1+s_1-s_2)}{\zeta(2+s_1-s_2)}
	 \frac{1}{(q_1 q_2)^{3+s_1-s_2}} 
	 \prod_{j=1}^2\left(1-\frac{1}{q_j^{2+s_1-s_2}}\right)^{-1} \diff s_1 \diff s_2.
\end{align}
By integration by parts, 
\ba 
\widetilde{w_H}(s) \ll \min \l(\frac{X^{\Re{s}}}{|s|}, \quad   \frac{X^{\Re{s}+1} }{H|s||s+1|}\r).
\ea
On using the first bound for $\widetilde{w_H}(s_2)$, the  second one for $\widetilde{w_H}(s_1)$, and on using standard bounds for  $\zeta(s)$, we  have
\begin{align}
	E \ll \frac{X^{3-\delta+\ve}}{H(q_1 q_2)^3 },
\end{align}
Choosing $\delta = 2/3$, we finally  have 
\ba 
E \ll X^{5/3+\ve},
\ea 
and this finishes the proof.
\end{proof}

\bibliographystyle{amsalpha}
\bibliography{reference}
\end{document}